\DeclareFontShape{T1}{lmr}{bx}{sc} { <-> ssub * cmr/bx/sc }{}
\pgfplotsset{compat=newest}
\numberwithin{equation}{section}
\setlist[enumerate]{label=(\roman*)}
\theoremstyle{plain}
\newtheorem{theorem}{Theorem}[section]
\newtheorem{proposition}[theorem]{Proposition}
\newtheorem{lemma}[theorem]{Lemma}
\newtheorem{corollary}[theorem]{Corollary}
\newtheorem{remark}[theorem]{Remark}
\DeclareMathOperator*{\argmin}{arg\,min} %argmax
\definecolor{mycolor1}{rgb}{0.00000,0.44700,0.74100}% blue
\definecolor{mycolor2}{rgb}{0.85000,0.32500,0.09800}% red
\definecolor{mycolor3}{rgb}{0.92900,0.69400,0.12500}% orange/yellow
\definecolor{mycolor4}{rgb}{0.46600,0.67400,0.18800}% green
\definecolor{mycolor5}{rgb}{0.49400,0.18400,0.55600}% purple
\title{Local Well-Posedness of the Mortensen Observer}
\author{Tobias Breiten${}^\dagger$ \and Jesper Schr\"oder${}^\dagger$}
\address{${}^{\dagger}$  Institute of Mathematics MA\,{}4-4, Technical University Berlin, Stra\ss e des 17.~Juni 136, 10623 Berlin, Germany}
\email{tobias.breiten@tu-berlin.de}
\email{j.schroeder@tu-berlin.de}
\date{\today}
\keywords{Observer design, minimum energy estimation, Hamilton-Jacobi-Bellman equation, optimal control}
\begin{document}

\begin{abstract}
	The analytical background of nonlinear observers based on minimal energy estimation is discussed. It is shown that locally the derivation of the observer equation based on a trajectory with pointwise minimal energy can be done rigorously. The result is obtained by a local sensitivity analysis of the value function based on Pontryagin's maximum principle and the Hamilton-Jacobi-Bellman equation. The consideration of a differential Riccati equation reveals that locally the second derivative of the value function is a positive definite matrix. The local convexity ensures existence of a trajectory minimizing the energy, which is then shown to satisfy the observer equation.
\end{abstract}

\maketitle
{\footnotesize \textsc{Keywords:} Observer design, minimum energy estimation, Hamilton-Jacobi-Bellman equation, optimal control}

{\footnotesize \textsc{AMS subject classification:} 49J15, 49L12, 49N60, 93B53}
 
\section{Introduction}
We consider a nonlinear perturbed dynamical system of the form
\begin{equation}\label{eq:state_intro}
	\begin{aligned}
		\dot{x}(t) &= Ax(t) + G(x(t)\otimes x(t)) +Fv(t) , \quad t \in (0,T], \\
		x(0)&= x_0 + \eta
	\end{aligned}
\end{equation}
where $A \in \mathbb R^{n,n}, G\in \mathbb R^{n,n^2},F\in \mathbb R^{n,m}, x_0\in \mathbb R^n$ are known and $v\in L^2(0,t;\mathbb R^m),\eta \in \mathbb R^n$ are assumed to be (deterministic) disturbances. Let us emphasize that the specific quadratic structure of the nonlinearity is mainly due to technical simplification rather than a necessary requirement for the results obtained in this article. In particular, by a process sometimes called \emph{lifting}, a considerably more general class of nonlinear systems can be embedded in a structure of the form \eqref{eq:state_intro}, see, e.g., \cite{BeBre15,Gu11}. For the system \eqref{eq:state_intro}, let us consider a (disturbed) linear observation of the form 
\begin{align}\label{eq:output_intro}
	y(t)=Cx(t) + \mu(t),
\end{align}
with $C\in \mathbb R^{r,n}$ and $\mu \in L^2(0,T;\mathbb R^r)$. The goal of this article is the theoretical analysis of the well-known (see \cite{Mor68}) \emph{Mortensen observer} 
\begin{equation}\label{eq:mortensen_intro}
	\begin{aligned}
		\dot{\widehat{x}}(t)&= A\widehat{x}(t) + G(\widehat{x}(t)\otimes \widehat{x}(t)) + \alpha \nabla^2_{\xi\xi}\mathcal{V}(t,\widehat{x}(t),y)^{-1} C^\top (y(t)-C\widehat{x}(t))  ,\quad t \in (0,T], \\
		\widehat{x}(0)&=x_0
	\end{aligned}
\end{equation}
which can (formally) be derived by pointwise minimization of the minimal value function of an optimal control problem associated with \eqref{eq:state_intro}. More precisely, for given $t\in (0,T]$ and $\xi \in \mathbb R^{n}$ we have that
\begin{equation}\label{eq:vf_intro}
	\begin{aligned}
		\mathcal{V}(t,\xi,y) = \inf_{\substack{x \in H^1(0,t;\mathbb{R}^n)\\ v \in L^2(0,t;\mathbb{R}^m)}} J(x,v;t,y) &\coloneqq \frac{1}{2} \left\Vert x(0) - x_0 \right\Vert^2 + \frac{1}{2} \int_0^t \Vert v \Vert^2 + \alpha \Vert y-Cx \Vert^2 \,\mathrm{d}s, \\ \text{s.t. \ }     e(x,v;t,\xi) & \coloneqq (\dot{x} - A x - G (x \otimes x) - F v, x(t)  - \xi) = 0.
	\end{aligned}
\end{equation}
An integral part of our analysis is the discussion of the corresponding time-dependent non-homogeneous Hamilton-Jacobi-Bellman (HJB) equation
\begin{equation}\label{eq:hjb_intro}
	\begin{aligned}
		\partial_t \mathcal{V}(t,\xi,y) 
		&= - \left\langle \nabla_\xi \mathcal{V}(t,\xi, y) , A\xi + G(\xi \otimes \xi) \right\rangle - \frac{1}{2} \left\Vert F^\top \nabla_\xi \mathcal{V}(t,\xi,y) \right\Vert^2 + \frac{\alpha}{2} \left\Vert y(t) - C \xi \right\Vert^2, \\
		\mathcal{V}(0,\xi,y)&= \frac{1}{2}\|\xi-x_0\|^2.
	\end{aligned}
\end{equation}

The observer \eqref{eq:mortensen_intro} was initially proposed as a \emph{maximum likelihood filter} in \cite{Mor68} where the author considered a more general setup as we do here, also allowing for nonlinear output operators. As was already noted by Mortensen, it generalizes the Kalman-Bucy filter \cite{Kal60,KalB61} which is obtained in \eqref{eq:mortensen_intro} for the special case of linear dynamics, i.e., when $G=0$ in \eqref{eq:state_intro}. In contrast to similar works on nonlinear (stochastic) filtering \cite{ItoX00,Kus62,Kus67,Zak69}, the maximum likelihood filter relies on a deterministic interpretation of the unknown disturbances $\eta,v$ and $\mu$ thereby connecting it to a deterministic optimal control problem of the form \eqref{eq:vf_intro}, see also \cite{Wil04} for a thorough discussion of the linear case. Moreover, the structure \eqref{eq:mortensen_intro} resembles a state-dependent Luenberger observer \cite{Lue71}. In the literature, \eqref{eq:mortensen_intro} is also referred to as a \emph{minimum energy estimator} \cite{Hij80,Kre79} or simply the \emph{Mortensen observer/estimator} \cite{Fle97,Moi18}. While initially derived on a formal level, in \cite{Fle97} the author provides a rigorous analysis of the optimal control problem as well as the value function under the assumption that the arising nonlinearities are continuously differentiable with globally bounded derivatives, cf.~also the more detailed general exposition in \cite{FleS06}. In \cite{Kre03a}, it is shown that if the nonlinearities are globally Lipschitz continuous, then the observer state $\widehat{x}(t)$ in \eqref{eq:mortensen_intro} converges asymptotically to the true state $x(t)$ as $t\to \infty$. Due to the value function in \eqref{eq:vf_intro} suffering from the curse of dimensionality, different approximations of the Mortensen observer have been proposed and theoretically analyzed. Let us mention \cite{BarBJ88} which discusses convergence results for nonlinearities with a globally bounded second derivative which is used within an approximate minimum energy estimator. Related to that, in \cite{Kre03} the author discusses convergence results for the extended Kalman filter which is known to be a (first order) polynomial approximation to the Mortensen observer, see also \cite{Mor68}. More recently, in \cite{Moi18} a discrete-time version of the Mortensen observer was analyzed under the assumption of affine dynamics. Let us also point to a numerical realization of the Mortensen observer by a neural network based approximation approach that was studied in \cite{BreKu21}.

Looking at the observer equation \eqref{eq:mortensen_intro} it becomes apparent that for the discussion of well-posedness of the observer trajectory it is essential to ensure existence of the Hessian $\nabla_{\xi\xi}^2 \mathcal{V}$ and further show that it is an invertible matrix. In general however, value functions are notoriously non-smooth, therefore showing the required smoothness is one of the main challenges of this work. For example, the seminal work \cite{CraL83} displays, that HJB equations as in \eqref{eq:hjb_intro} generally do not allow for a classical notion of a solution. Early works on regularity of value functions for finite and infinite-dimensional systems can be found in, e.g.,  \cite{CanF91,CanF92}. In the latter references, the considered nonlinearities are assumed to be Lipschitz continuous and exhibit a linear growth condition w.r.t.~the state variable. In view of the term $G(x\otimes x)$, these results do not directly apply here. Let us also refer to \cite{CanFra13,CanF14} where the authors show local regularity results for HJB solutions based on pointwise regularity. In the context of (unconstrained) infinite-horizon  control problems, in \cite{BreKP19,BreKuPf19} it has been shown that the associated (time-independent) value function is infinitely often differentiable. The problem of finite-horizon control problems under state and control constraints was discussed in \cite{Mal83,MalM01,MalM98}. Here the authors were able to show that the solutions of a parameterized problem depend on said parameter in a differentiable fashion. However, these results do not yield time-differentiability of the value function considered in this work, because the controls are assumed to be essentially bounded. Further \cite{MalM01,MalM98} explicitly assume the parameter to be independent of time. 

In this article, we will employ some of the ideas from \cite{BreKP19,BreKuPf19} and perform a sensitivity analysis for appropriately chosen time-dependent finite-horizon non-homogeneous control problems as in \eqref{eq:vf_intro}. Our main results can be summarized as follows:
\begin{itemize}   
	\item[(i)] Based on a nominal trajectory obtained for the undisturbed dynamics, we define a       local neighborhood on which the HJB equation \eqref{eq:hjb_intro} has a classical            solution, see \Cref{Theorem: HJB holds}. 
	\item[(ii)] For the Hessian $\nabla^2_{\xi\xi}\mathcal{V}$ of the value function $\mathcal{V}$, in \Cref{Proposition: DRE in zero without IV} we analyze a specific differential Riccati equation whose solution we show to be positive definite. As a consequence, in \Cref{Theorem: Hessian invertible close to zero}, we locally extend this result implying the (local) positive definiteness of $\nabla^2_{\xi\xi}\mathcal{V}(t,\xi,y)$.  
	\item[(iii)] In Lemma \ref{lem: VFminimizer} we establish an upper bound for minimizers of $\mathcal{V}(t,\cdot,\omega)$. Together with the local strict convexity of $\mathcal{V}$ a unique minimizer can be identified as a solution of $\nabla_\xi \mathcal{V}(t,\cdot,\omega) = 0$, cf. Proposition \ref{prop: WDargmin}.
	\item[(iv)] For sufficiently small and continuous $\omega$ the observer equation can then be derived in a rigorous manner. Via a density argument the result is transferred to less regular data. The main result is stated in \Cref{thm: MainThm}. It shows that if initial perturbation and dynamics as well as output disturbances are sufficiently small, the Mortensen observer \eqref{eq:mortensen_intro} is well-defined. 
\end{itemize}  

The technique of showing regularity of the value function by an application of the inverse mapping theorem that we deploy is a well known tool in the context of sensitivity analysis. A thorough display stating general results can be found for example in \cite{ItoK08}. The results presented there could be used to show first order differentiability of the value function with respect to space and time. However, this work further requires time continuity of the spatial derivatives of the value function up to order three. To the best of the authors' knowledge this smoothness can not be obtained directly with any of the available sources, therefore this work contains the technical sensitivity analysis of the specific problem using only elementary results.
The above mentioned time continuity of $\nabla_\xi \mathcal{V}$, $\nabla^2_{\xi\xi} \mathcal{V}$ and $\nabla^3_{\xi^3} \mathcal{V}$ is shown using an argument of time uniform convergence of the difference quotients utilizing a bound for the spatial derivatives, see Proposition \ref{Proposition: Space derivatives continuous in time}. Hence it is crucial that all constants in this work remain independent of time. While this poses a challenge throughout the article, it also allows for a stronger result, see Remark \ref{rem: MainThm}.\\

\textbf{Notation.}
If not mentioned otherwise, $\Vert \cdot \Vert$ will denote the Euclidean norm on $\mathbb{R}^d$, where the dimension $d$ varies. The associated scalar product is denoted by $\langle \cdot, \cdot \rangle$. The $i$-th canonical basis vector is denoted by $e_i$. Further we denote by $I_d$ the identity matrix of dimension $d$ and the matrix spectral norm is denoted by $\Vert \cdot \Vert_2$. The Kronecker product of two matrices $A$ and $B$ or two vectors $x$ and $y$ is denoted by $A \otimes B$ and $x \otimes y$, respectively. For $1 \leq p \leq \infty $ we denote by $L^p(0,T;\mathbb{R}^d)$ the Lebesgue spaces, while $H^1(0,T;\mathbb{R}^d)$ denotes the Sobolev space of functions with a first weak derivative in $L^2(0,T;\mathbb{R}^d)$. Furthermore $C([0,T];\mathbb{R}^d)$ and $C^k([0,T];\mathbb{R}^d)$ denote the spaces of functions $f \colon [0,T] \to \mathbb{R}^d$ that are continuous and $k$ times continuously differentiable, respectively. For $d = 1$ the image space is dropped in the notation. The space of all functions from $C^\infty([0,T];\mathbb{R}^n)$ with compact support is denoted by $C_0^\infty([0,T];\mathbb{R}^n)$. All mentioned spaces are equipped with their standard norms. The space of solution trajectories will be denoted by $V_t$ and is given by $H^1(0,t;\mathbb{R}^n)$ equipped with the norm $\Vert \cdot \Vert_{V_t} = \max \left(\Vert \cdot \Vert_{H^1(0,t;\mathbb{R}^n)} , \Vert \cdot \Vert_{L^\infty(0,t;\mathbb{R}^n)}\right)$.

For two Banach spaces $X$ and $Y$ their Cartesian product is denoted by $X \times Y$ and equipped with the norm $\Vert (x,y) \Vert_{X \times Y} = \max \left( \Vert x \Vert_X, \Vert y \Vert_Y \right)$. Further $L(X,Y)$ denotes the space of linear and bounded mappings from $X$ to $Y$. For an element $x \in X$ and a real number $\epsilon > 0$ the open ball of radius $\epsilon$ around $x$ is denoted by $\mathcal{U}_\epsilon(x)$. The weak convergence of a sequence $x_k$ to some $x$ is denoted as $x_k \rightharpoonup x$, for $k \to \infty$. For a function $f \colon X \to Y$ its Fr\'{e}chet derivative is denoted by $Df$. For a function $f \colon X_1 \times X_2 \to Y$, where $X_1$ and $X_2$ are Banach spaces, the partial Fr\'{e}chet derivative with respect to the first variable is denoted by $D_{x_1}f$.  Higher order and mixed partial derivatives are denoted with appropriate indices, e.g., $D^2_{x_1 x_2}f$.

Throughout this work we use $c$ as a generic constant. 

%%%%%%%%%%%%%%%%%%%%%%%%%%%%%%%%%%%%%%%%%%%%%%%%%%%%%
\section{Preliminaries}\label{Section: Preliminaries}
%%%%%%%%%%%%%%%%%%%%%%%%%%%%%%%%%%%%%%%%%%%%%%%%%%%%%
For this entire work let $T > 0$ be fixed. In order to formulate the state equation of interest assume that for some given initial value $x_0 \in \mathbb{R}^n$ and matrices $A \in \mathbb{R}^{n,n}$ and $G \in \mathbb{R}^{n,n^2} $ the problem
\begin{equation*}
	\begin{aligned}
		\dot{x}(t) &= Ax(t) + G(x(t) \otimes x(t)),\\
		x(0) &= x_0,
	\end{aligned}
\end{equation*}
admits a unique solution $\Tilde{x} \in V_T$. This so-called nominal trajectory $\Tilde{x}$ will play an important role throughout this work.
The state equation is formulated on an interval $[0,t]$, where $t \in (0,T]$ is fixed. It reads 
\begin{equation}\label{StateEquationExplicit}
	\begin{aligned}
		\dot{x}(s) &= A x(s) + G (x(s) \otimes x(s)) + Fv(s), \\
		x(t) &= \Tilde{x}(t) + \xi, 
	\end{aligned}
\end{equation}
where the matrix $F \in \mathbb{R}^{n,m}$ is given. Furthermore the disturbance $v \in L^2(0,t; \mathbb{R}^m)$ is for now fixed. One searches for a solution $x \in V_t$ that satisfies the first equation almost everywhere in $[0,t]$. Note that this formulation is not consistent with \eqref{eq:vf_intro}. For technical reasons in the following $\xi$ will not describe the potential state of the observer but the difference of observer trajectory and nominal trajectory. This can be considered as a change of coordinates. Instead of considering neighborhoods around $\Tilde{x}$, we deal with neighborhoods of zero.

We further want to emphasize that the dynamics considered in this work are given by a disturbed initial value problem, see \eqref{eq:state_intro}. This specifically means that the dynamics evolve forward in time, as does the observer trajectory characterized by \eqref{eq:mortensen_intro}. However, the technical discussions of this work are concerned with the state equation of the optimal control problem which is given as a final value problem and therefore is associated with dynamics evolving backwards in time, see \eqref{eq:vf_intro} and \eqref{StateEquationExplicit}. This discrepancy stems from the idea to define the observer via an energy minimization and sets our discussion apart from the dominant literature. There the value function and HJB equation is usually discussed in the context of a forward problem. 

Before the state equation can be discussed, a result about a general linear equation is presented. It is of the form
\begin{equation}\label{LinearEquationInitialValueGeneral}
	\begin{aligned}
		\dot{x}(s) &= B_t(s) x(s) + f(s),\\
		x(0) &= \xi,
	\end{aligned}
\end{equation}
where $f \in L^2(0,t;\mathbb{R}^n)$ is fixed. The system matrix $B_t$ is not only time-dependent, but also depends on the interval $[0,t]$ on which the equation is considered. However it will be assumed that the spectral norm of the system matrix is bounded uniformly in time and in the parameter $t$. Since the right hand side is measurable and integrable in $s$, and fulfills a generalized Lipschitz condition in $x$, this problem admits a unique solution $x \in V_t$. The following lemma presents an estimate for the solution.
\begin{lemma}\label{Lemma: Estimate linear equation}
	Let $s \mapsto B_t(s) \in \mathbb{R}^{n,n}$ be a matrix-valued function depending on the parameter $t \in (0,T]$. Let $t \in (0,T]$ and assume that there exists a constant $\kappa > 0$ independent of $t$ and $s$ such that for all $s \in [0,t]$ it holds $\Vert B_t(s) \Vert_2 \leq \kappa$. Then there exists a constant $\bar{c}(\kappa) > 0$ independent of $t$, $\xi$ and $f$ such that the unique solution $x \in V_t$ of \eqref{LinearEquationInitialValueGeneral} satisfies
	\begin{equation*}
		\Vert x \Vert_{V_t} \leq \bar{c}(\kappa) \left( \Vert \xi \Vert + \Vert f \Vert_{L^2(0,t;\mathbb{R}^n)} \right).
	\end{equation*}
\end{lemma}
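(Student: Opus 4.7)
The plan is to control all three contributions to $\Vert x \Vert_{V_t}$, namely $\Vert x \Vert_{L^\infty(0,t;\mathbb{R}^n)}$, $\Vert x \Vert_{L^2(0,t;\mathbb{R}^n)}$, and $\Vert \dot x \Vert_{L^2(0,t;\mathbb{R}^n)}$, by a single constant depending only on $\kappa$ (and on the globally fixed horizon $T$, which may enter $\bar c(\kappa)$). The backbone is a Grönwall argument applied to the integral form
\begin{equation*}
  x(s) = \xi + \int_0^s B_t(r)\, x(r) \dd r + \int_0^s f(r) \dd r, \qquad s \in [0,t].
\end{equation*}

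First I would take Euclidean norms on both sides, use $\Vert B_t(r) \Vert_2 \leq \kappa$, and apply Cauchy--Schwarz to the inhomogeneity, obtaining
\begin{equation*}
  \Vert x(s) \Vert \leq \Vert \xi \Vert + \sqrt{t}\, \Vert f \Vert_{L^2(0,t;\mathbb{R}^n)} + \kappa \int_0^s \Vert x(r) \Vert \dd r.
\end{equation*}
Since $t \leq T$, the non-integral part is already bounded by $\Vert \xi \Vert + \sqrt{T}\, \Vert f \Vert_{L^2(0,t;\mathbb{R}^n)}$, which is independent of $t$ (up to the fixed constant $\sqrt T$). The classical Grönwall lemma then yields
\begin{equation*}
  \Vert x(s) \Vert \leq \bigl( \Vert \xi \Vert + \sqrt{T}\, \Vert f \Vert_{L^2(0,t;\mathbb{R}^n)} \bigr) e^{\kappa T},
\end{equation*}
uniformly in $s \in [0,t]$, which directly gives the $L^\infty$ bound with a constant depending only on $\kappa$ and $T$.

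Next I would leverage this pointwise bound to control the $L^2$-norm via $\Vert x \Vert_{L^2(0,t;\mathbb{R}^n)} \leq \sqrt{T}\, \Vert x \Vert_{L^\infty(0,t;\mathbb{R}^n)}$, and then use the ODE itself to bound the weak derivative through
\begin{equation*}
  \Vert \dot x \Vert_{L^2(0,t;\mathbb{R}^n)} \leq \kappa\, \Vert x \Vert_{L^2(0,t;\mathbb{R}^n)} + \Vert f \Vert_{L^2(0,t;\mathbb{R}^n)}.
\end{equation*}
Combining these three estimates with a common constant $\bar c(\kappa)$ absorbing $e^{\kappa T}$, $\sqrt{T}$, $\kappa$, and harmless numerical factors yields the claimed inequality for $\Vert x \Vert_{V_t}$.

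There is no real obstacle here beyond careful bookkeeping. The only point requiring attention, which the authors explicitly flag as a recurring theme of the paper, is that every constant must remain independent of $t$; this is automatic because $t \in (0,T]$ and $T$ is fixed once and for all, so all $t$-dependent quantities get monotonically dominated by their counterparts at $T$. Writing out the three steps with this uniform-in-$t$ discipline delivers the result.
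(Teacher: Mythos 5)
Your proof is correct and follows essentially the same route the paper sketches: a Gr\"onwall estimate yielding a uniform-in-$t$ bound on $\Vert x(s)\Vert$, followed by reading off $\Vert \dot x \Vert_{L^2}$ directly from the equation. The only (cosmetic) difference is that you apply the integral form of Gr\"onwall with Cauchy--Schwarz on the forcing term, whereas the paper tests the equation with $x$ and uses Young's inequality before invoking Gr\"onwall; both give the same constant structure $\bar c(\kappa)$ depending only on $\kappa$ and the fixed horizon $T$.
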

\begin{proof}
	The proof is done in a standard way by testing the equation with its solution and integrating over time. Utilizing Young's inequality, Gronwall's inequality and the representation of $\dot{x}$ given by the equation yields the assertion.
\end{proof}
Via a time transformation it can be shown that the same result holds, if one considers a finite value problem instead of the initial value problem.\\
\section{The optimal control problem}\label{Section: The optimal control problem}
The main result of this work is the proof of existence of a solution to the observer equation
\begin{equation}\label{eq:ObsShif}
	\begin{aligned}
		\dot{\widehat{x}}(t)&= A'(t)\widehat{x}(t) + G(\widehat{x}(t)\otimes \widehat{x}(t)) + \alpha \nabla^2_{\xi\xi}\mathcal{V}(t,\widehat{x}(t),\omega)^{-1} C^\top (\omega(t)-C\widehat{x}(t))  ,\quad t \in (0,T], \\
		\widehat{x}(0)&=0,
	\end{aligned}
\end{equation}
where $A'(t) = A + G(\Tilde{x}(t) \otimes I_n) + G(I_n \otimes \Tilde{x}(t))$. The function $\mathcal{V}(t,\xi,\omega)$ is the value function associated with the control problem 
\begin{equation}\label{Control Problem}
	\begin{aligned}
		\inf_{(x,v) \in V_t \times L^2(0,t;\mathbb{R}^m)} J(x,v;t,\omega) &\coloneqq \frac{1}{2} \left\Vert x(0) - x_0 \right\Vert^2 + \frac{1}{2} \int_0^t \Vert v \Vert^2 + \alpha \Vert \omega - C \left(x-\Tilde{x}\right) \Vert^2 \,\mathrm{d}s, \\ \text{s.t. }
		e(x,v;t,\xi) &\coloneqq (\dot{x} - A x - G (x \otimes x) - F v, x(t) - \Tilde{x}(t) - \xi) = 0,
	\end{aligned}
\end{equation}
where $t\in (0,T]$, $\alpha > 0$, $C \in \mathbb{R}^{r,n}$, $\omega \in L^2(0,t;\mathbb{R}^r)$ and $\xi \in \mathbb{R}^n$ are fixed. The given $x_0 \in \mathbb{R}^n$ is the initial condition of the nominal trajectory $\tilde{x}$.

Similar to the coordinate change performed for $\xi$, we utilize a transformation for the measured data. The control problem and the value function are not formulated in terms of the measured data $y$. Instead they depend on the difference of measured data and modeled output. More precisely, we introduce the variable $\omega = y - C\Tilde{x}$. Now the undisturbed model considered on $[0,t]$ corresponds to $\xi = 0$ and $\omega = 0$ instead of $\xi = \tilde{x}(t) $ and $y = C\Tilde{x}$. In this sense \eqref{eq:ObsShif} can be understood as an adjusted version of \eqref{eq:mortensen_intro}. Specifically, the solution trajectories are shifted by $\Tilde{x}$. Since both formulations are equivalent, it suffices to show results for the shifted one.\\ 

The purpose of Sections 3 to 5 is to prove sufficient regularity of $\mathcal{V}$ such that $\nabla^2_{\xi\xi}\mathcal{V}$ exists and is an invertible matrix. This however can only be achieved in a local sense, which results in the necessity for an upper bound on $\Vert \omega \Vert_{L^2(0,T;\mathbb{R}^r)}$ to ensure solvability of \eqref{eq:ObsShif}. More precisely, it will be shown that there exists a constant $\delta > 0$ such that the assertion holds for all $\omega$ satisfying $\Vert \omega \Vert_{L^2(0,T;\mathbb{R}^r)} < \delta$. In order to achieve this, it is essential that all constants describing locality are independent of time. This goal will pose a challenge throughout this work. The benefits of this effort are summarized in Remark \ref{rem: MainThm}.

In order to obtain regularity results concerning $\mathcal{V}$, one first has to perform a standard analysis of the optimal control problem. The remainder of this section is concerned with solvability of the state equation, existence of minimizers and the first order optimality condition expressed via the solution of the adjoint state equation.
\subsection{State equation}
First it will be shown that for $\xi \in \mathbb{R}^n$ and $v \in L^2(0,t;\mathbb{R}^m)$ with sufficiently small norms the state equation \eqref{StateEquationExplicit} admits a unique solution. For this subsection let $t \in (0,T]$ be fixed. First consider
\begin{equation}\label{ErrorEquationGeneral}
	\dot{x} = - A x - G(I_n \otimes \Tilde{x}_t) x - G(\Tilde{x}_t \otimes I_n) x - G(x \otimes x) - f,~
	x(0) = \xi,
\end{equation}
on the interval $[0,t]$, where $\Tilde{x}_t(s) \coloneqq \Tilde{x}(t-s)$ is a time transformation of the nominal trajectory on $[0,t]$.
\begin{proposition}\label{Proposition:ErrorSolvability}
	Let $t \in (0,T]$ and let $\bar{c} = \bar{c}(\Vert A \Vert_2 + 2 \Vert G \Vert_2\Vert \Tilde{x} \Vert_{L^\infty(0,T;\mathbb{R}^n)})$ be the constant from Lemma \ref{Lemma: Estimate linear equation}. Then there exists a constant $\delta_1^\prime > 0$ independent of $t$ such that for all $\xi \in \mathbb{R}^n$ and $f \in L^2(0,t;\mathbb{R}^n)$ satisfying
	\begin{equation*}
		\Vert \xi \Vert + \Vert f \Vert_{L^2(0,t;\mathbb{R}^n)} \leq \delta_1^\prime 
	\end{equation*}
	there exists a unique solution $x \in V_t$ of \eqref{ErrorEquationGeneral}. It satisfies 
	\begin{equation*}
		\Vert x \Vert_{V_t}
		\leq 2 \bar{c} \left( \Vert \xi \Vert + \Vert f \Vert_{L^2(0,t;\mathbb{R}^n)} \right).
	\end{equation*}
\end{proposition}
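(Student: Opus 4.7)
The plan is to treat the quadratic term $G(x\otimes x)$ as a source and apply Banach's fixed-point theorem. Setting $B_t(s) \coloneqq -A - G(I_n \otimes \tilde{x}_t(s)) - G(\tilde{x}_t(s) \otimes I_n)$, I would observe that $\|B_t(s)\|_2 \leq \|A\|_2 + 2\|G\|_2 \|\tilde{x}\|_{L^\infty(0,T;\mathbb{R}^n)}$ uniformly in $s \in [0,t]$ and $t \in (0,T]$, so \Cref{Lemma: Estimate linear equation} produces a linear estimate with a constant $\bar{c}$ that is independent of $t$. I would then define the map $\Phi \colon V_t \to V_t$ sending $y$ to the unique solution of the linear problem $\dot{x} = B_t x - G(y\otimes y) - f$, $x(0) = \xi$, and show that its unique fixed point is the desired solution.

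The essential nonlinear estimate is $\|G(y\otimes y)\|_{L^2(0,t;\mathbb{R}^n)} \leq \|G\|_2 \|y\|_{L^\infty(0,t;\mathbb{R}^n)} \|y\|_{L^2(0,t;\mathbb{R}^n)} \leq \|G\|_2 \|y\|_{V_t}^2$, which exploits that the $V_t$-norm dominates both the $L^\infty$- and the $H^1$-norm. Combined with \Cref{Lemma: Estimate linear equation} this yields $\|\Phi(y)\|_{V_t} \leq \bar{c}(\|\xi\| + \|f\|_{L^2(0,t;\mathbb{R}^n)}) + \bar{c}\|G\|_2 \|y\|_{V_t}^2$. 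Setting $r \coloneqq 2\bar{c}(\|\xi\| + \|f\|_{L^2(0,t;\mathbb{R}^n)})$ and imposing $\|\xi\| + \|f\|_{L^2(0,t;\mathbb{R}^n)} \leq \delta_1'$ with $\delta_1'$ sufficiently small (depending only on $\bar{c}$ and $\|G\|_2$), a direct calculation verifies that $\Phi$ maps the closed ball of radius $r$ in $V_t$ into itself. For the contraction I would split $y_1 \otimes y_1 - y_2 \otimes y_2 = y_1 \otimes (y_1 - y_2) + (y_1 - y_2) \otimes y_2$ and reuse the same bookkeeping to obtain $\|\Phi(y_1)-\Phi(y_2)\|_{V_t} \leq 2\bar{c}\|G\|_2 r \|y_1-y_2\|_{V_t}$; shrinking $\delta_1'$ further turns this into a strict contraction. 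Banach's theorem then supplies the unique fixed point with the asserted $V_t$-bound, and uniqueness in the entire space $V_t$ follows from a standard Gronwall argument exploiting that $G(x\otimes x)$ is locally Lipschitz on bounded subsets of $V_t$.

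The main obstacle is insisting that $\delta_1'$ be independent of $t \in (0,T]$. This hinges on two features of the setup: the uniform-in-$t$ spectral bound on $B_t(s)$, which produces a $t$-independent $\bar{c}$ through \Cref{Lemma: Estimate linear equation}; and the inequality $\|G(y\otimes y)\|_{L^2(0,t;\mathbb{R}^n)} \leq \|G\|_2 \|y\|_{V_t}^2$, whose constant does not see $t$ precisely because the $V_t$-norm already incorporates the $L^\infty$-norm. A naive estimate using only $\|y\|_{L^2}^2 \leq t \|y\|_{L^\infty}^2$ or similar would introduce a $\sqrt{t}$-factor and spoil uniformity; avoiding this is exactly what motivates the definition of $\|\cdot\|_{V_t}$, and the same principle will be reused throughout the subsequent sensitivity analysis.
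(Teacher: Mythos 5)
Your proposal is correct and takes essentially the same approach as the paper: the paper's proof is exactly a fixed-point argument on the ball $\mathcal{M} = \{ x \in V_t \colon \Vert x \Vert_{V_t} \leq 2\bar{c}(\Vert \xi \Vert + \Vert f \Vert_{L^2(0,t;\mathbb{R}^n)}) \}$ (citing \cite[Lemma 5]{BreKuPf19} for the strategy), with uniqueness in $V_t$ handled ``in a standard manner''---your Gronwall step. Your explicit bookkeeping, in particular obtaining $t$-independence of $\delta_1^\prime$ through the estimate $\Vert G(y \otimes y) \Vert_{L^2(0,t;\mathbb{R}^n)} \leq \Vert G \Vert_2 \Vert y \Vert_{V_t}^2$ enabled by the $L^\infty$-component of the $V_t$-norm, correctly fills in the details the paper delegates to the reference.
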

\begin{proof}
	The proof of existence and the bound is carried out via a fixed point argument on the set
	\begin{equation*}
		\mathcal{M} \coloneqq \left\{ x \in V_t \,\colon
		\Vert x \Vert_{V_t}
		\leq 2 \bar{c} \left( \Vert \xi \Vert + \Vert f \Vert_{L^2(0,t;\mathbb{R}^n)} \right) \right\}.
	\end{equation*}
	Such a strategy to prove existence of a solution can be found for example in \cite[Lemma 5]{BreKuPf19}. Uniqueness of the solution in $V_t$ is proven in a standard manner.
\end{proof}
\begin{remark}
	For an appropriate estimation of the quadratic term the estimates for both the sup norm and the $H^1$-norm are required. The need for the estimate of the sup norm can not be circumvented by the continuous embedding 
	\begin{equation*}
		\Vert x - \Tilde{x} \Vert_{L^\infty(0,t;\mathbb{R}^n)} \leq \check{c}(t) \Vert x - \Tilde{x} \Vert_{H^1(0,t;\mathbb{R}^n)}.
	\end{equation*}
	This is due to the unboundedness of the optimal embedding constant $\check{c}(t) = \sinh(t)^{-\tfrac{1}{2}}$ (see \cite{Mar83}) for $t$ approaching zero. Without the estimate for the sup norm $\delta_1$ would depend on time $t$ with the property $\delta_1(t) \to 0$ for $t \to 0$. This issue will come up frequently in the following discussions. Therefore estimates are presented for the $V_t$-norm instead of the $H^1$-norm.
\end{remark}
The unique solvability of the state equation follows immediately.
\begin{corollary}\label{Corollary: SolvabilityStateEquation}
	There exist constants $\delta_1 > 0$ and $\tilde{c} > 0$ independent of $t$ such that for any pair $(\xi, v) \in \mathbb{R}^n \times L^2(0,t;\mathbb{R}^m) $ satisfying $ \Vert \xi \Vert + \Vert v \Vert_{L^2(0,t;\mathbb{R}^n)} \leq \delta_1 $ the state equation \eqref{StateEquationExplicit} admits a unique solution $x \in V_t$. It satisfies
	\begin{equation*}
		\Vert x - \Tilde{x} \Vert_{V_t}
		\leq \Tilde{c} \left( \Vert \xi \Vert + \Vert v \Vert_{L^2(0,t;\mathbb{R}^m)} \right).
	\end{equation*}
\end{corollary}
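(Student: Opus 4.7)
The plan is to reduce the final-value problem \eqref{StateEquationExplicit} to the initial-value problem \eqref{ErrorEquationGeneral} already handled in \Cref{Proposition:ErrorSolvability}. To do so, I would introduce the shifted and time-reversed variable $z(s) \coloneqq x(t-s) - \tilde{x}(t-s) = x(t-s) - \tilde{x}_t(s)$ for $s \in [0,t]$. Then $z(0) = x(t) - \tilde{x}(t) = \xi$, so the final condition on $x$ becomes an initial condition on $z$ of exactly the form required by \eqref{ErrorEquationGeneral}.

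Next I would verify that $z$ satisfies the governing equation of \eqref{ErrorEquationGeneral}. Using the identity
\begin{equation*}
x \otimes x - \tilde{x} \otimes \tilde{x} = (x-\tilde{x}) \otimes (x-\tilde{x}) + (x-\tilde{x}) \otimes \tilde{x} + \tilde{x} \otimes (x-\tilde{x}),
\end{equation*}
subtracting the nominal dynamics $\dot{\tilde{x}} = A\tilde{x} + G(\tilde{x}\otimes \tilde{x})$ from the state equation, and reversing time via the chain rule $\dot z(s) = -(\dot x - \dot{\tilde{x}})(t-s)$, one obtains
\begin{equation*}
\dot{z}(s) = -Az(s) - G(I_n \otimes \tilde{x}_t(s))z(s) - G(\tilde{x}_t(s) \otimes I_n)z(s) - G(z(s)\otimes z(s)) - f(s),
\end{equation*}
with $f(s) \coloneqq F v(t-s)$. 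This is exactly \eqref{ErrorEquationGeneral} with inhomogeneity $f$ and initial value $\xi$.

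I can then invoke \Cref{Proposition:ErrorSolvability} directly. Since $\|f\|_{L^2(0,t;\mathbb{R}^n)} \leq \|F\|_2 \|v\|_{L^2(0,t;\mathbb{R}^m)}$, setting $\delta_1 \coloneqq \delta_1'/\max(1,\|F\|_2)$ makes the smallness hypothesis $\|\xi\| + \|f\|_{L^2(0,t;\mathbb{R}^n)} \leq \delta_1'$ a consequence of $\|\xi\| + \|v\|_{L^2(0,t;\mathbb{R}^m)} \leq \delta_1$. \Cref{Proposition:ErrorSolvability} then yields a unique $z \in V_t$ with
\begin{equation*}
\|z\|_{V_t} \leq 2\bar c\bigl(\|\xi\| + \|f\|_{L^2(0,t;\mathbb{R}^n)}\bigr) \leq 2\bar c\,\max(1,\|F\|_2)\bigl(\|\xi\| + \|v\|_{L^2(0,t;\mathbb{R}^m)}\bigr),
\end{equation*}
so the asserted constant is $\tilde c = 2\bar c\max(1,\|F\|_2)$, independent of $t$ because $\bar c$ is. Undoing the time reversal gives a unique solution $x \in V_t$ of \eqref{StateEquationExplicit} with $\|x - \tilde{x}\|_{V_t} = \|z\|_{V_t}$, which is the desired bound.

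There is essentially no obstacle beyond bookkeeping: the only thing to be careful about is the algebraic manipulation of the Kronecker product difference $x \otimes x - \tilde{x}\otimes \tilde{x}$ and checking that the time reversal preserves the $V_t$-norm (it preserves both the $L^\infty$ and $H^1$ components, so this is immediate). The independence of $\delta_1$ and $\tilde c$ from $t$ is inherited directly from \Cref{Proposition:ErrorSolvability}, which is why the preceding proposition was stated with $t$-uniform constants in the first place.
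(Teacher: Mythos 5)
Your proof is correct and takes essentially the same approach as the paper: the paper's proof likewise passes to the error variable $e = x - \Tilde{x}$ and uses a time transformation to reduce the final-value problem \eqref{StateEquationExplicit} to the initial-value problem \eqref{ErrorEquationGeneral}, then invokes \Cref{Proposition:ErrorSolvability} and sets $x = \Tilde{x} + e$. Your write-up merely makes explicit the bookkeeping the paper leaves implicit, namely the Kronecker identity for $x \otimes x - \Tilde{x} \otimes \Tilde{x}$, the choice $f(s) = Fv(t-s)$, and the concrete constants $\delta_1 = \delta_1'/\max(1,\Vert F \Vert_2)$ and $\tilde{c} = 2\bar{c}\max(1,\Vert F \Vert_2)$.
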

\begin{proof}
	By performing a time transformation of \eqref{ErrorEquationGeneral} one concludes the unique solvability of 
	\begin{equation*}\label{EquationErrorExplicit}
		\dot{e} = A e + G(I_n \otimes \Tilde{x}) e + G(\Tilde{x} \otimes I_n) e + G(e \otimes e) + Fv,~
		e(t) = \xi
	\end{equation*}
	for sufficiently small $\xi$ and $f = Fv$. Then $x = \Tilde{x} + e$ is the desired solution and satisfies the estimate of the assertion.
\end{proof}
\subsection{Existence of a minimizer}\label{SubS:MinEx} Next it will be shown that for sufficiently small data the control problem admits a solution. Again the proof can be done using well known techniques and we limit its discussion to a rough sketch. For the general result we refer to \cite[Theorem 1.45]{HinetalS09} and for a very similiar statement see \cite[Lemma 8]{BreKuPf19}.
\begin{proposition}\label{Proposition: Existence}
	Let $t \in (0,T]$. There exists a constant $\delta_2 \in (0,\delta_1] $ independent of $t$ such that for all $(\xi,\omega) \in \mathbb{R}^n \times L^2(0,t;\mathbb{R}^r)$ satisfying $\max\left( \Vert \xi \Vert , \Vert \omega \Vert_{L^2(0,t;\mathbb{R}^r)}\right) \leq \delta_2$ the optimal control problem \eqref{Control Problem} admits a solution $(\bar{x},\bar{v}) \in V_t \times L^2(0,t;\mathbb{R}^m)$. Furthermore, there exists a constant $M_1>0$ independent of $t$, $\xi$ and $\omega$ such that any minimizing pair $(z,u) \in V_t \times L^2(0,t;\mathbb{R}^m)$ satisfies
	\begin{equation*}
		\max \left(\Vert z - \Tilde{x} \Vert_{V_t}, \Vert u \Vert_{L^2(0,t;\mathbb{R}^m)} \right) 
		\leq M_1 \max\left( \Vert \xi \Vert , \Vert \omega \Vert_{L^2(0,t;\mathbb{R}^r)} \right).\\
	\end{equation*}
\end{proposition}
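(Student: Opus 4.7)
The plan is a direct-method argument whose subtlety, characteristic of this paper, lies in keeping every constant independent of $t \in (0,T]$. First I would produce a reference feasible pair: for $\|\xi\| \leq \delta_1$, \Cref{Corollary: SolvabilityStateEquation} applied with $v = 0$ delivers $x^0 \in V_t$ satisfying $x^0(t) = \tilde x(t)+\xi$ together with $\|x^0 - \tilde x\|_{V_t} \leq \tilde c\,\|\xi\|$. Since $\tilde x(0) = x_0$ and $\|x^0 - \tilde x\|_{L^2(0,t;\mathbb{R}^n)} \leq \sqrt{T}\,\tilde c\,\|\xi\|$, evaluating $J$ at $(x^0,0)$ and splitting the output term by the triangle inequality yields the time-uniform comparison bound
\begin{equation*}
    \mathcal{V}(t,\xi,\omega) \;\leq\; J(x^0,0;t,\omega) \;\leq\; c\,\max\!\bigl(\|\xi\|,\, \|\omega\|_{L^2(0,t;\mathbb{R}^r)}\bigr)^2,
\end{equation*}
which any minimizing pair $(z,u)$ inherits automatically.

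From the non-negativity of the three summands in $J$, the above inequality immediately gives $\|u\|_{L^2(0,t;\mathbb{R}^m)} \leq c\,\max(\|\xi\|,\|\omega\|_{L^2(0,t;\mathbb{R}^r)})$ for any minimizer. Choosing $\delta_2 \in (0,\delta_1]$ small enough that $\|\xi\|+\|u\|_{L^2(0,t;\mathbb{R}^m)} \leq \delta_1$ whenever $\max(\|\xi\|,\|\omega\|_{L^2(0,t;\mathbb{R}^r)}) \leq \delta_2$, a second application of \Cref{Corollary: SolvabilityStateEquation}, now to the error equation for $z - \tilde x$ with forcing $Fu$, produces
\begin{equation*}
    \|z - \tilde x\|_{V_t} \;\leq\; \tilde c \bigl(\|\xi\|+\|u\|_{L^2(0,t;\mathbb{R}^m)}\bigr) \;\leq\; M_1\,\max\bigl(\|\xi\|,\|\omega\|_{L^2(0,t;\mathbb{R}^r)}\bigr)
\end{equation*}
with $M_1$ independent of $t$, which is the desired a priori estimate.

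Existence of a minimizer is then a standard compactness argument. Given a minimizing sequence $(x_k,v_k)$ with $J(x_k,v_k;t,\omega) \leq \mathcal{V}(t,\xi,\omega)+1/k$, the preceding reasoning (applied elementwise) shows that $\{v_k\}$ is bounded in $L^2(0,t;\mathbb{R}^m)$ and $\{x_k-\tilde x\}$ in $V_t$; extracting a subsequence gives $v_k \rightharpoonup \bar v$ in $L^2$ and $x_k \rightharpoonup \bar x$ in $H^1$. The compact embedding $H^1(0,t;\mathbb{R}^n) \hookrightarrow C([0,t];\mathbb{R}^n)$ upgrades the latter to uniform convergence, which is more than enough to pass to the limit in the quadratic nonlinearity $G(x_k \otimes x_k)$ and in the endpoint condition $x_k(t) = \tilde x(t)+\xi$; combined with the weak lower semicontinuity of the convex integrand of $J$, this identifies $(\bar x, \bar v)$ as a minimizer. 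The main obstacle is not the compactness step itself but the time-uniformity of the constants: since the $H^1(0,t)\hookrightarrow L^\infty$ embedding constant $\check c(t)$ blows up as $t\downarrow 0$ (cf.\ the remark following \Cref{Proposition:ErrorSolvability}), the argument must rely exclusively on the $V_t$-estimates supplied by \Cref{Lemma: Estimate linear equation} and \Cref{Corollary: SolvabilityStateEquation}, so that both $\delta_2$ and $M_1$ depend only on $T$, $A$, $G$, $F$, $C$, $\alpha$, and $\|\tilde x\|_{L^\infty(0,T;\mathbb{R}^n)}$.
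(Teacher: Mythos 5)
Your proposal is correct and follows essentially the same route as the paper's (sketched) proof: a feasible comparison pair via \Cref{Corollary: SolvabilityStateEquation} bounds the infimum by $c\max(\Vert\xi\Vert,\Vert\omega\Vert_{L^2})^2$, whence control bounds for (near-)minimizers, a reduction of $\delta_2$ so that \Cref{Corollary: SolvabilityStateEquation} applies again to yield the time-uniform $V_t$-estimate, and then weak compactness plus weak lower semicontinuity for existence. The only cosmetic difference is that you pass to the limit in the quadratic term via the compact embedding $H^1(0,t;\mathbb{R}^n)\hookrightarrow C([0,t];\mathbb{R}^n)$, whereas the paper invokes the compact embedding into $L^2(0,t;\mathbb{R}^n)$; both are equally valid here.
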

\begin{proof}
	For now set $\delta_2 = \delta_1$. With Corollary \ref{Corollary: SolvabilityStateEquation} it follows that the uncontrolled state equation associated with $\xi$ admits a unique solution. Hence one obtains an upper bound for the infimum in \eqref{Control Problem} of the form $j \leq c \delta_2^2$. For a minimizing sequence $(x_k,v_k)$ one can assume $J(x_k,v_k;t,\omega) \leq 2 c \delta_2^2$ which yields $\Vert v_k \Vert_{L^2(0,t;\mathbb{R}^m)} \leq \sqrt{2} c \delta_2$. A reduction of $\delta_2$ leads to $\Vert \xi \Vert + \Vert v_k \Vert_{L^2(0,t;\mathbb{R}^m)} \leq \delta_2 + \sqrt{2} c \delta_2 \leq \delta_1$ allowing the application of Corollary \ref{Corollary: SolvabilityStateEquation} to the state equation controlled by $v_k$. Going over to appropriate subsequences of the bounded sequences $(x_k)$ and $(v_k)$ implies the existence of weak limits $\bar{x}$ and $\bar{v}$. Passing to the weak limit it is shown that $e(\bar{x},\bar{v},\xi,t) = 0$. In order to pass to the weak limit in the quadratic term one has to go over to a subsequence that converges strongly in $L^2(0,t;\mathbb{R}^n)$. This is possible because $H^1$ is compactly embedded in $L^2$. Since the cost functional is weakly sequentially lower semi continuous, $(\bar{x},\bar{v})$ is a minimizer. The estimate can be derived with the strategies applied to show boundedness of the minimizing sequence.
\end{proof}
\subsection{Necessary optimality condition}
In this subsection a first order necessary optimality condition of the optimal control problem will be derived.
\begin{proposition}\label{Proposition: FirstOrderOptimalityCondition}
	Let $t \in (0,T]$ be fixed and assume $\max\left( \Vert \xi \Vert, \Vert \omega \Vert_{L^2(0,t;\mathbb{R}^r)} \right) \leq \delta_2$ and let $(\bar{x},\bar{v}) \in V_t \times L^2(0,t;\mathbb{R}^m)$ be a minimizer of the corresponding optimal control problem \eqref{Control Problem}. Then there exists a unique adjoint state $p \in V_t$ satisfying
	\begin{align}
		\dot{p} &= -A^\top p - (\bar{x}^\top \otimes I_n)G^\top p - (I_n \otimes \bar{x}^\top)G^\top p + \alpha C^\top (\omega - C (\bar{x}-\Tilde{x})),\label{Adjoint1}\\
		p(0) &= x_0 - \bar{x} (0),\label{Adjoint2}\\
		\bar{v} + F^\top p &= 0\label{OptimalityConditionFormula},
	\end{align}
	where \eqref{Adjoint1} is fulfilled almost everywhere in $[0,t]$, \eqref{Adjoint2} is fulfilled in $\mathbb{R}^n$ and \eqref{OptimalityConditionFormula} holds in $L^2(0,t;\mathbb{R}^m)$. Furthermore there exists a constant $M_2 > 0$ independent of $t$, $\bar{x}$, $\bar{v}$, and $\omega$ such that it holds
	\begin{equation*}
		\Vert p \Vert_{V_t}
		\leq M_2 \max\left( \Vert \xi \Vert , \Vert \omega \Vert_{L^2(0,t;\mathbb{R}^r)} \right).
	\end{equation*}
\end{proposition}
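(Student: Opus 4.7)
The plan is to apply the classical reduced-cost/Lagrangian approach, adapted to the backward-in-time structure of the state equation \eqref{StateEquationExplicit}. First, relying on \Cref{Corollary: SolvabilityStateEquation} and an implicit function theorem argument applied to the constraint operator $e$ from \eqref{Control Problem}, I would show that the solution map $v \mapsto x(v)$ is Fr\'echet differentiable on an $L^2$-neighbourhood of $\bar{v}$. The required invertibility of $D_x e(\bar{x},\bar{v};t,\xi)$ reduces to the unique solvability of the linearised state equation, which is itself a linear final value problem with bounded coefficients and is therefore covered by a time-reversed application of \Cref{Lemma: Estimate linear equation}. The derivative $z = D_v x(\bar{v})\delta v$ then satisfies
\begin{equation*}
\dot{z}(s) = A z(s) + G(z(s)\otimes \bar{x}(s)) + G(\bar{x}(s)\otimes z(s)) + F\delta v(s), \qquad z(t) = 0,
\end{equation*}
and Fermat's rule applied to the reduced cost $\widehat{J}(v) = J(x(v),v;t,\omega)$ at $\bar{v}$ produces the variational identity
\begin{equation*}
\langle \bar{x}(0) - x_0,\, z(0)\rangle + \int_0^t \bigl(\langle \bar{v}, \delta v\rangle - \alpha\langle \omega - C(\bar{x}-\Tilde{x}),\, Cz\rangle\bigr)\,\mathrm{d}s = 0 \quad \text{for all } \delta v \in L^2(0,t;\mathbb{R}^m).
\end{equation*}

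Next I would construct $p$ as the unique $V_t$-solution of the forward-in-time linear initial value problem \eqref{Adjoint1}--\eqref{Adjoint2}. Since \Cref{Proposition: Existence} yields a bound on $\|\bar{x}\|_{L^\infty(0,t;\mathbb{R}^n)}$ that is uniform in $t$, the system matrix $-A^\top - (I_n\otimes \bar{x}^\top)G^\top - (\bar{x}^\top\otimes I_n)G^\top$ has spectral norm bounded by a constant $\kappa$ depending only on the problem data and $\delta_2$. \Cref{Lemma: Estimate linear equation} then delivers both existence and uniqueness of $p$, together with the estimate
\begin{equation*}
\|p\|_{V_t} \leq \bar{c}(\kappa)\bigl(\|x_0 - \bar{x}(0)\| + \alpha\,\|C^\top(\omega - C(\bar{x}-\Tilde{x}))\|_{L^2(0,t;\mathbb{R}^n)}\bigr).
\end{equation*}
Combining this with \Cref{Proposition: Existence}, the elementary inequality $\|\bar{x}(0) - x_0\| \leq \|\bar{x} - \Tilde{x}\|_{V_t}$ (which is exactly the reason the sup-norm is built into the $V_t$-norm), and $\|\bar{x}-\Tilde{x}\|_{L^2(0,t;\mathbb{R}^n)} \leq \sqrt{T}\,\|\bar{x}-\Tilde{x}\|_{V_t}$, produces the announced $V_t$-bound on $p$ with a constant $M_2$ independent of $t$. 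I would then test the linearised state equation with $p$, integrate by parts exploiting $z(t) = 0$, and substitute \eqref{Adjoint1}. Thanks to the particular choice \eqref{Adjoint2}, the boundary term $-\langle p(0), z(0)\rangle$ precisely cancels $\langle \bar{x}(0) - x_0, z(0)\rangle$ in the variational identity, and the running pairing with $Cz$ cancels as well. What remains is $\int_0^t \langle \bar{v} + F^\top p,\, \delta v\rangle\,\mathrm{d}s = 0$ for every $\delta v \in L^2(0,t;\mathbb{R}^m)$; testing with $\delta v = \bar{v} + F^\top p$ yields \eqref{OptimalityConditionFormula}.

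The main obstacle I anticipate is keeping every constant independent of $t$, in line with the leitmotif of the paper. The naive use of the embedding $H^1(0,t)\hookrightarrow L^\infty(0,t)$ has to be avoided because its optimal constant degenerates as $t\to 0$; this is precisely why the $V_t$-norm is tailored to carry both the $H^1$- and the sup-norm information, and why \Cref{Lemma: Estimate linear equation} was formulated in the $V_t$-norm in the first place. Consequently, both the uniform control of the adjoint coefficient $(I_n\otimes \bar{x}^\top)G^\top + (\bar{x}^\top\otimes I_n)G^\top$ and the estimate of $\|p(0)\|$ must be routed through the sup-norm component of $V_t$, rather than through the degenerate embedding constant.
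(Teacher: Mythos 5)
Your proposal is correct, but it takes a genuinely more self-contained route than the paper, whose entire proof of this proposition is a one-line appeal to the abstract Lagrange multiplier rule of \cite[Proposition~1, Section~4.14]{Zei95AMS109}. Where the paper outsources existence of the multiplier to that general result, you instantiate it by hand: differentiability of the control-to-state map via the implicit function theorem, with invertibility of $D_x e(\bar{x},\bar{v};t,\xi)$ reduced to unique solvability of the linearized final value problem, which is indeed covered by the time-reversed version of \Cref{Lemma: Estimate linear equation} exactly as the remark following that lemma permits; then Fermat's rule for the reduced cost, explicit construction of $p$ as the solution of the forward linear initial value problem \eqref{Adjoint1}--\eqref{Adjoint2}, and the integration-by-parts cancellation yielding \eqref{OptimalityConditionFormula}. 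Your duality bookkeeping checks out: since $(\bar{x}^\top \otimes I_n)G^\top$ and $(I_n \otimes \bar{x}^\top)G^\top$ are precisely the transposes of $z \mapsto G(\bar{x}\otimes z)$ and $z \mapsto G(z \otimes \bar{x})$, testing the linearized equation (with $z(t)=0$) against $p$ makes the boundary term and the observation pairing cancel as claimed. What your approach buys is the part the paper's citation does not address at all: the quantitative bound with $M_2$ independent of $t$. You correctly route $\Vert p \Vert_{V_t} \leq \bar{c}(\kappa)\left( \Vert x_0 - \bar{x}(0) \Vert + \alpha \Vert C^\top(\omega - C(\bar{x}-\tilde{x})) \Vert_{L^2(0,t;\mathbb{R}^n)} \right)$ through \Cref{Lemma: Estimate linear equation} with $\kappa = \Vert A \Vert_2 + 2\Vert G \Vert_2 \left( \Vert \tilde{x} \Vert_{L^\infty(0,T;\mathbb{R}^n)} + M_1 \delta_2 \right)$, uniform in $t$ by \Cref{Proposition: Existence}, and you exploit $x_0 = \tilde{x}(0)$ so that $\Vert \bar{x}(0) - x_0 \Vert \leq \Vert \bar{x} - \tilde{x} \Vert_{V_t}$ goes through the sup-norm component of $V_t$ rather than the embedding constant that degenerates as $t \to 0$ --- exactly the discipline the paper's framework demands. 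Two small points worth making explicit in a final write-up: Fermat's rule applies because the implicit function theorem branch $(x(v),v)$ is admissible for $v$ near $\bar{v}$, hence $\widehat{J}(v) = J(x(v),v;t,\omega) \geq \mathcal{V}(t,\xi,\omega) = \widehat{J}(\bar{v})$, so $\bar{v}$ is a local minimizer of the reduced cost even though $\widehat{J}$ is only defined locally; and uniqueness of $p$ is immediate from the linear initial value problem \eqref{Adjoint1}--\eqref{Adjoint2} alone, as you note.
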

\begin{proof}
	This standard result can be shown for example by an application of \cite[Proposition 1, Section 4.14]{Zei95AMS109}. 
\end{proof}

\section{Sensitivity analysis}\label{Section: Sensitivity analysis}

For $(\xi,\omega)$ satisfying $\max \left( \Vert \xi \Vert, \Vert \omega \Vert_{L^2(0,T;\mathbb{R}^r)} \right) \leq \delta_2$ and $t\in (0,T]$ the value function corresponding to the control problem \eqref{Control Problem} is defined as
\begin{equation*}
	\mathcal{V}(t,\xi,\omega) = \min\limits_{\substack{x \in V_t \\ v \in L^2(0,t;\mathbb{R}^m)}} J(x,v;t,\omega), ~~\text{ subject to: } ~e(x,v;t,\xi) = 0
\end{equation*}
with initialization defined by $\mathcal{V}(0,\xi,\omega) = \tfrac{1}{2} \Vert \xi \Vert^2$. Note that due to Proposition \ref{Proposition: Existence} it is well-defined.

The goal of this section is to show sufficient regularity of the value function to ensure that its Hessian $\nabla^2_{\xi\xi} \mathcal{V}$ is well-defined and differentiable in $t$.
\subsection{Regularity in space and measurement}\label{Subsection: Regularity in space and measurement}
The regularity of the value function with respect to the last two variables will be shown using the chain rule.
To that end it is shown that for sufficiently small data there exists a unique minimizing pair which depends on the data in a smooth fashion. For a fixed $t \in (0,T]$ this is achieved by an application of the implicit function theorem to the function $P_t \colon Y_t \times X_t \rightarrow Y_t$, $(w,h) \mapsto \Phi_t(h) - w$, where 
\begin{equation*}
	\begin{aligned}
		&X_t \coloneqq V_t \times L^2(0,t;\mathbb{R}^m) \times V_t \times L^2(0,t;\mathbb{R}^r),\\ 
		&Y_t \coloneqq L^2(0,t;\mathbb{R}^r) \times L^2(0,t;\mathbb{R}^n) \times \mathbb{R}^n \times L^2(0,t;\mathbb{R}^n) \times \mathbb{R}^n \times L^2(0,t;\mathbb{R}^m),
	\end{aligned}
\end{equation*}
and $\Phi_t: X_t \rightarrow Y_t$ is defined by
\begin{equation}\label{Phi Definition}
	\Phi_t(x,v,p,\omega) =
	\begin{pmatrix}
		\omega\\
		\dot{x} - Ax - G(x \otimes x) - Fv\\
		x(t) - \Tilde{x}(t)\\
		\dot{p} + A^\top p + (x^\top \otimes I_n)G^\top p + (I_n \otimes x^\top)G^\top p - \alpha C^\top(\omega - C(x-\Tilde{x}))\\
		p(0) - x_0 + x(0)\\
		v + F^\top p
	\end{pmatrix}.
\end{equation}
Note that it holds $\Phi_t(x,v,p,\omega) = (\omega,0,\xi,0,0,0)$ if and only if the triple $(x,v,p)$ satisfies the first order necessary optimality condition of the optimal control problem \eqref{Control Problem} associated with $\xi$ and $\omega$.

The strategy of applying the implicit function to the mapping $P_t$ is equivalent to an application of the inverse mapping theorem to $\Phi_t$. The specific version of the implicit function theorem that we apply is presented in \cite{Hol70}. It gives sufficient conditions under which the obtained neighborhoods are given explicitly, which is necessary for the resulting constants to be independent of time. The following technical lemma is needed when verifying the assumptions made in \cite{Hol70}. The proof requires the discussion of a linear quadratic optimal control problem which can be found in Appendix \ref{Appendix A}.
\begin{lemma}\label{lem: DPhi}
	Denote $h_0  = (\Tilde{x},0,0,0)\in X_t$. Further let $h = (x,v,p,\omega) \in X_t$ be fixed but arbitrary. Then $D\Phi_t(h_0) \colon X_t \rightarrow Y_t$ is bijective. Further there exist constants $\Tilde{M}$, $\breve{M} > 0$ independent of $t$ such that
	\begin{equation*}
		\begin{aligned}
			\Vert D\Phi_t(h_0) - D\Phi_t(h) \Vert_{L(X_t,Y_t)}
			&\leq \Tilde{M} \Vert h_0 - h \Vert_{X_t} \text{ and }\\
			\Vert D\Phi_t(h_0)^{-1} \Vert_{L(Y_t,X_t)} 
			&\leq \breve{M}.
		\end{aligned}
	\end{equation*}
\end{lemma}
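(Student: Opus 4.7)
The plan is to handle the three assertions in the order bijectivity, Lipschitz estimate, inverse bound, after first writing $D\Phi_t$ explicitly. Because $\Phi_t$ is polynomial of degree at most two in the tuple $(x,v,p,\omega)$ (quadratic through $G(x\otimes x)$ and bilinear through the $x\cdot p$ terms in the adjoint equation), the Fr\'echet derivative $D\Phi_t(h)$ depends affinely on $h$, which will immediately pay off in the Lipschitz step. Evaluating at $h_0 = (\tilde{x},0,0,0)$ kills all contributions linear in $v,p,\omega$, leaving $D\Phi_t(h_0)(\delta x,\delta v, \delta p, \delta \omega)$ as a tuple whose nontrivial entries are the linearisation of the state equation around $\tilde x$, the final-value functional $\delta x(t)$, the adjoint equation with coefficient matrix $A+G(\tilde x\otimes I_n)+G(I_n\otimes \tilde x)$ plus the $-\alpha C^\top(\delta\omega - C\delta x)$ coupling, the trace $\delta p(0)+\delta x(0)$, and $\delta v + F^\top\delta p$.

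For bijectivity, I would solve the equation $D\Phi_t(h_0)(\delta x,\delta v,\delta p,\delta \omega) = (\omega^\ast, f, \xi, g, \eta, r)\in Y_t$ by elimination: the first row fixes $\delta\omega=\omega^\ast$, and the last row gives $\delta v = r - F^\top \delta p$. Substituting into the state equation reduces the problem to a linear forward-backward two-point boundary value system for $(\delta x,\delta p)$ of the form
\begin{equation*}
\begin{aligned}
\dot{\delta x} &= A'(\cdot)\,\delta x - FF^\top \delta p + Fr + f, & \delta x(t) &= \xi,\\
\dot{\delta p} &= -A'(\cdot)^\top \delta p - \alpha C^\top C\,\delta x + \alpha C^\top\omega^\ast + g, & \delta p(0) &= \eta - \delta x(0),
\end{aligned}
\end{equation*}
with $A'(s) = A + G(I_n\otimes \tilde x(s)) + G(\tilde x(s)\otimes I_n)$. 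This is exactly the first order optimality system of a linear-quadratic optimal control problem on $[0,t]$; existence, uniqueness and a $t$-uniform norm estimate for its solution are the content of Appendix \ref{Appendix A}. Combined with Lemma \ref{Lemma: Estimate linear equation} applied separately to the forward and backward equations, whose system matrix satisfies $\|A'(s)\|_2 \leq \|A\|_2 + 2\|G\|_2 \|\tilde x\|_{L^\infty(0,T;\R^n)}$ uniformly in $s$ and $t$, this simultaneously proves bijectivity and yields the bound $\|D\Phi_t(h_0)^{-1}\|_{L(Y_t,X_t)} \leq \breve M$ with $\breve M$ independent of $t$.

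For the Lipschitz estimate I would use affineness of $D\Phi_t$ in $h$ to write $D\Phi_t(h_0) - D\Phi_t(h)$ componentwise: the state-equation row gains $G(\delta x\otimes(\tilde x - x)) + G((\tilde x - x)\otimes \delta x)$, and the adjoint row gains bilinear expressions of the same kind involving $p$ or $\tilde x - x$ together with $\delta x$ or $\delta p$, plus the linear term $\alpha C^\top C \delta x$ that was already accounted for at $h_0$. Each such contribution is bounded in $L^2(0,t;\R^n)$ by the product of an $L^\infty$ norm of one factor and an $L^2$ norm of the other, both dominated by the $V_t$ norm; taking the supremum over increments of unit $X_t$-norm gives $\|D\Phi_t(h_0)-D\Phi_t(h)\|_{L(X_t,Y_t)}\leq \tilde M \|h_0-h\|_{X_t}$ with $\tilde M$ depending only on $\|G\|_2$, $\alpha$, $\|C\|_2$, and $\|\tilde x\|_{L^\infty(0,T;\R^n)}$.

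The main obstacle, as emphasised throughout the paper, is time uniformity of the constants. For the Lipschitz step this is why one must estimate the quadratic terms through the $V_t$ norm directly, never through the continuous embedding $H^1(0,t)\hookrightarrow L^\infty(0,t)$, whose constant $\sinh(t)^{-1/2}$ blows up as $t\to 0$. For the inverse-bound step the difficulty is shifted into Appendix \ref{Appendix A}: one must solve the linearised KKT system with estimates that do not deteriorate as the horizon shrinks, which is precisely the place where the specific structure of the linear-quadratic problem (rather than a generic well-posedness result) is needed.
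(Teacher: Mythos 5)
Your proposal follows essentially the same route as the paper's own proof: the paper likewise reduces $D\Phi_t(h_0)(x,v,p,\omega)=w$ to the linear-quadratic KKT system of Appendix~\ref{Appendix A} (with $\breve{x}_t=\tilde{x}\vert_{[0,t]}$ and $\rho=0$), obtains bijectivity and the $t$-uniform inverse bound $\breve{M}$ from Proposition~\ref{Proposition: Appendix A}, and proves the Lipschitz estimate by the same explicit computation of the affine difference $D\Phi_t(h_0)-D\Phi_t(h)$ estimated through $V_t$-norms --- where, as you correctly indicate, the linear $\alpha C^\top C$ and $\alpha C^\top$ terms cancel exactly, so the constant is in fact just $\tilde{M}=4\Vert G\Vert_2$ rather than depending on $\alpha$, $\Vert C\Vert_2$ or $\Vert\tilde{x}\Vert_{L^\infty}$. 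Your emphasis on keeping all constants independent of $t$ by working with the $V_t$-norm instead of the embedding $H^1(0,t)\hookrightarrow L^\infty(0,t)$ matches the paper's stated rationale precisely.
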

\begin{proof}
	First note that for $h = (x,v,p,\omega) \in X_t$ and $w = (\mu,f,a,-l_1,b,-l_2) \in Y_t$ it holds $D\Phi_t(\Tilde{x},0,0,0)(x,v,p,\omega) = (\mu,f,a,-l_1,b,-l_2) $ if and only if
	\begin{equation}\label{eqSys}
		\left\{\begin{array}{ccl}
			\omega &=&\mu,\\
			\dot{x} &=& Ax + G(\Tilde{x} \otimes I_n)x + G(I_n \otimes \Tilde{x})x + Fv +f,\\
			x(t) &=& a,\\
			\dot{p} &=& -A^\top p -(\Tilde{x}^\top \otimes I_n) G^\top p - (I_n \otimes \Tilde{x}^\top)G^\top p + \alpha C^\top (\omega - C x) - l_1,\\
			p(0) &=& b - x(0),\\
			v + F^\top p + l_2 &=& 0.
		\end{array}
		\right.
	\end{equation}
	We now employ results presented in Appendix \ref{Appendix A}. System \eqref{eqSys} is equivalent to $\omega = \mu$ and $(x,v,p)$ being the solution of \eqref{Appendix KKT system} associated with $\breve{x}_t = \tilde{x}\vert_{[0,t]}$, $\rho = 0$ and the data given by $w \in Y_t$. Since $\Vert \tilde{x}\vert_{[0,t]} \Vert_{L^\infty(0,t;\mathbb{R}^n)} \leq \Vert \tilde{x} \Vert_{L^\infty(0,t;\mathbb{R}^n)}$ and $0 < \beta$, Proposition \ref{Proposition: Appendix A} yields the unique solvability of the system and hence shows that $D\Phi_t(h_0)$ is bijective. The estimate presented in Proposition \ref{Proposition: Appendix A} together with the definition of the operator norm further shows the bound for $\Vert D\Phi_t(h_0) \Vert_{L(Y_t,X_t)}$. To show the first estimate note that for any $(z,u,q,\eta) \in X_t$ satisfying $ \Vert (z,u,q,\eta) \Vert_{X_t} = 1 $ it holds
	\begin{equation*}
		\begin{aligned}
			&\Vert D\Phi_t(h_0)(z,u,q,\eta) - D\Phi_t(h)(z,u,q,\eta) \Vert_{Y_t}\\
			&=  \left\Vert \begin{matrix}
				0 \\  
				G( (x-\Tilde{x}) \otimes z) + G( z \otimes (x - \Tilde{x}) )\\
				0 \\ 
				\left( ( (\Tilde{x} - x)^\top \otimes I_n ) + ( I_n \otimes (\Tilde{x} - x)^\top ) \right)G^\top q - \left( (z^\top \otimes I_n) + (I_n \otimes z^\top) \right)G^\top p \\
				0 \\ 0
			\end{matrix} \right\Vert_{Y_t}\\
			&\leq 2 \Vert G \Vert_2 \max \left(
			\Vert z \Vert_{V_t} \Vert x - \Tilde{x} \Vert_{V_t},
			\Vert q \Vert_{V_t} \Vert x - \Tilde{x} \Vert_{V_t} 
			+ \Vert z \Vert_{V_t} \Vert p \Vert_{V_t} \right) \\
			&\leq 2 \Vert G \Vert_2 \left( \Vert x - \Tilde{x} \Vert_{V_t} + \Vert p \Vert_{V_t} \right)
			\leq 4 \Vert G \Vert_2 \Vert h - h_0 \Vert_{X_t}.
		\end{aligned}
	\end{equation*}
	Taking the supremum over all such $(z,u,q,\eta) \in X_t$ yields the existence of $\tilde{M}$.
\end{proof}
It follows the proof that $\Phi_t$ has a local inverse, with a domain and image space that are independent of time.
\begin{lemma}\label{Lemma: Four Cinfty Mappings}
	Let $t \in (0,T]$ be arbitrary. Then there exist constants $\delta_3 >0$ and $\delta_3^\prime >0$ independent of $t$ and three $C^\infty$ functions $\mathcal{X}_t$, $\mathcal{U}_t$ and $\mathcal{P}_t$ such that for all $(\xi,\omega) \in \mathcal{U}_{\delta_3}(0)$ the quadruple $\left( \mathcal{X}_t(\xi,\omega), \mathcal{U}_t(\xi,\omega), \mathcal{P}_t(\xi,\omega), \omega \right)$ is the unique solution to 
	\begin{equation}
		\Phi_t(x,v,p,\omega) = (\omega, 0,\xi,0,0,0),~~~(x,v,p,\omega) \in \mathcal{U}_{\delta_3^\prime}(\tilde{x},0,0,0).
	\end{equation}
\end{lemma}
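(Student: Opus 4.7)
The plan is to apply the quantitative inverse function theorem of \cite{Hol70} to $\Phi_t$ at the point $h_0 = (\tilde{x},0,0,0)$. The first task is the verification that $\Phi_t(h_0) = 0 \in Y_t$: since $\tilde{x}$ is the nominal trajectory satisfying $\dot{\tilde{x}} = A\tilde{x} + G(\tilde{x}\otimes\tilde{x})$ with $\tilde{x}(0) = x_0$, inspection of the six components in \eqref{Phi Definition} yields that all six vanish when $(v,p,\omega) = 0$ (the terminal component is $\tilde{x}(t)-\tilde{x}(t) = 0$, the adjoint residual vanishes as $p \equiv 0$, and the initial condition of $p$ becomes $0 - x_0 + \tilde{x}(0) = 0$). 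Hence the equation $\Phi_t(h) = (\omega, 0, \xi, 0, 0, 0)$ can be viewed, near $h_0$, as inverting $\Phi_t$ about $0 \in Y_t$ on data of small size $\max(\Vert \xi\Vert, \Vert\omega\Vert_{L^2})$.

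Next I would feed the two quantitative estimates of Lemma \ref{lem: DPhi} into the inverse function theorem of \cite{Hol70}: the bound $\Vert D\Phi_t(h_0)^{-1}\Vert_{L(Y_t,X_t)} \leq \breve{M}$ together with the Lipschitz-type estimate $\Vert D\Phi_t(h_0) - D\Phi_t(h)\Vert_{L(X_t,Y_t)} \leq \tilde{M}\Vert h - h_0\Vert_{X_t}$. The version in \cite{Hol70} produces an explicit radius $\delta_3'$ for the neighborhood in $X_t$ on which $\Phi_t$ is a diffeomorphism, together with an explicit radius $\delta_3$ for a ball around $\Phi_t(h_0) = 0$ in $Y_t$ contained in the image; both radii depend only on $\breve{M}$ and $\tilde{M}$, and these constants are themselves independent of $t$ by Lemma \ref{lem: DPhi}. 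This is the crux of the argument and also the main obstacle: I have to ensure that the neighborhoods are chosen in this quantitatively explicit way, rather than merely by a qualitative implicit-function argument, since otherwise the radii could shrink as $t \to 0$ and the later density/continuation arguments would break.

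Having obtained the local inverse $\Phi_t^{-1} \colon \mathcal{U}_{\delta_3}(0) \subset Y_t \to \mathcal{U}_{\delta_3'}(h_0) \subset X_t$, I would define, for $(\xi,\omega) \in \mathcal{U}_{\delta_3}(0) \subset \mathbb{R}^n \times L^2(0,t;\mathbb{R}^r)$ (shrinking $\delta_3$ if necessary so that the tuple $(\omega,0,\xi,0,0,0)$ lies in the admissible ball in $Y_t$),
\[
    \bigl(\mathcal{X}_t(\xi,\omega),\, \mathcal{U}_t(\xi,\omega),\, \mathcal{P}_t(\xi,\omega),\, \omega'\bigr) \coloneqq \Phi_t^{-1}(\omega, 0, \xi, 0, 0, 0).
\]
The first component of $\Phi_t$ is just the identity in $\omega$, so $\omega' = \omega$ follows automatically, and the remaining three components define the desired mappings.

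Smoothness is then essentially free: $\Phi_t$ is polynomial (in fact quadratic) in $(x,v,p,\omega)$, hence $C^\infty$, and the inverse function theorem transfers this regularity to $\Phi_t^{-1}$, so $\mathcal{X}_t,\mathcal{U}_t,\mathcal{P}_t$ are $C^\infty$ on $\mathcal{U}_{\delta_3}(0)$. Uniqueness within $\mathcal{U}_{\delta_3'}(h_0)$ is precisely the injectivity half of the local diffeomorphism property. I expect no further difficulties beyond tracking the explicit form of the Holmes estimates to confirm independence of $t$.
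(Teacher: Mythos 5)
Your proposal is correct and follows essentially the same route as the paper: the authors apply the implicit function theorem of \cite{Hol70} to $P_t(w,h)=\Phi_t(h)-w$ at $(0,h_0)$ — which they explicitly note is equivalent to your inverse-function-theorem formulation for $\Phi_t$ — and they feed in exactly the two estimates of Lemma~\ref{lem: DPhi} to obtain explicit, $t$-independent radii, namely $\delta_3 = \tfrac{1}{4\Tilde{M}\Breve{M}^2}$ and $\delta_3^\prime = \tfrac{1}{2\Tilde{M}\Breve{M}}$. The one place where your sketch is lighter than the paper is the smoothness step: the theorem in \cite{Hol70} yields only a \emph{continuous} local inverse, so $C^\infty$ regularity is not ``essentially free'' from that theorem alone; the paper closes this by observing that the differentiability bootstrap in classical versions of the implicit function theorem (e.g.\ \cite[Theorem 4.E]{Zei95AMS109}) does not alter the neighborhoods, which is what lets the smoothness assertion hold on the same $t$-independent balls.
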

\begin{proof}
	Let $t \in (0,T]$ be fixed. The proof will be carried out by an application of the implicit function theorem presented in \cite{Hol70} to the $C^\infty$-mapping $P_t(w,h) = \Phi_t(h) - w$. With $w_0 = 0 \in Y_t$ and $h_0 = (\Tilde{x},0,0,0) \in X_t$ it holds $P_t(w_0,h_0) = 0$. According to Lemma \ref{lem: DPhi} $D_h P_t = D \Phi_t$ exists, is continuous in $Y_t \times X_t$ and has a bounded linear inverse. Note that the upper bound of the norm of the inverse $\breve{M}$ is independent of $t$. With the non-decreasing functions $g_1(s,i) = \Tilde{M} i$ and $g_2(s) = s$ and the constants $\alpha = \tfrac{1}{2}$, $\delta_3 = \tfrac{1}{4\Tilde{M} \Breve{M}^2}$ and $\delta_3^\prime = \tfrac{1}{2\Tilde{M}\Breve{M}}$ all assumptions are fulfilled and the assertion holds for the chosen, time-independent constants $\delta_3$ and $\delta_3^\prime$. Here the functions $g_1$ and $g_2$ and the parameter $\alpha$ correspond to the functions and constant with the same name used in \cite{Hol70}. The constants $\delta_3$ and $\delta_3^\prime$ correspond to $\delta$ and $\epsilon$ in \cite{Hol70} respectively.
	
	At this point let us note that the implicit function theorem presented in \cite{Hol70} only yields continuity of the three functions, but not differentiability. However, comparing the assumptions and the proof with the ones of more classical versions (see for example \cite[Theorem 4.E]{Zei95AMS109}) shows that the neighborhoods are only adjusted within the application of the fixed point argument. The part of the proof that shows that differentiability of order $k$ of $P_t$ carries over to $\mathcal{X}_t$, $\mathcal{U}_t$ and $\mathcal{P}_t$ does not alter the neighborhoods at all. Hence the assertion of higher regularity holds for the chosen constants. 
\end{proof}
Note that this result does not yet imply that the pair $(\mathcal{X}_t,\mathcal{U}_t)$ minimizes the cost functional. It has only been shown that the triple  $\left( \mathcal{X}_t(\xi,\omega), \mathcal{U}_t(\xi,\omega), \mathcal{P}_t(\xi,\omega) \right)$ satisfies the first order optimality condition. The following proposition ensures that it actually solves the optimal control problem.
\begin{proposition}\label{Proposition: Mappings Data to Minimizer}
	Let $t \in (0,T]$. Then there exists $\delta_4 \in (0,\min(\delta_2, \delta_3, \delta_3^\prime)]$ independent of $t$ such that for all $(\xi,\omega) \in \mathcal{U}_{\delta_4}(0)$ the optimal control problem \eqref{Control Problem} admits exactly one solution. It is given by $(\bar{x},\bar{v})=\left( \mathcal{X}_t(\xi,\omega), \mathcal{U}_t(\xi,\omega) \right).$ The associated adjoint state is given by $\mathcal{P}_t(\xi,\omega)$.
\end{proposition}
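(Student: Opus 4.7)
The plan is to reconcile the two perspectives on optimizers developed so far: Propositions \ref{Proposition: Existence} and \ref{Proposition: FirstOrderOptimalityCondition} produce at least one minimizer together with an adjoint state and corresponding first-order optimality system, whereas Lemma \ref{Lemma: Four Cinfty Mappings} gives a locally unique solution of that optimality system inside a fixed neighborhood of $(\tilde{x},0,0,0)\in X_t$. To extract uniqueness of the minimizer, I would show that every candidate minimizer, along with its adjoint, automatically lies in that neighborhood once $(\xi,\omega)$ is small enough, and then invoke the uniqueness assertion of Lemma \ref{Lemma: Four Cinfty Mappings}.

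First, I would require $\delta_4 \le \min(\delta_2,\delta_3,\delta_3^\prime)$ so that all three results apply simultaneously. For $(\xi,\omega)\in\mathcal{U}_{\delta_4}(0)$, Proposition \ref{Proposition: Existence} provides at least one minimizer $(\bar{x},\bar{v})$, and Proposition \ref{Proposition: FirstOrderOptimalityCondition} provides a unique adjoint $p\in V_t$ such that the triple $(\bar{x},\bar{v},p)$ satisfies the KKT system, which by construction of $\Phi_t$ is equivalent to the equation
\begin{equation*}
\Phi_t(\bar{x},\bar{v},p,\omega)=(\omega,0,\xi,0,0,0).
\end{equation*}

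Second, I would use the time-uniform bounds from the two previous propositions to control the position of this quadruple in $X_t$. Recalling that the norm on $X_t$ is the maximum of the component norms, these bounds give
\begin{equation*}
\|(\bar{x},\bar{v},p,\omega)-(\tilde{x},0,0,0)\|_{X_t}\le \max(M_1,M_2,1)\,\max\!\left(\|\xi\|,\|\omega\|_{L^2(0,t;\mathbb{R}^r)}\right).
\end{equation*}
Now I would shrink $\delta_4$ once more so that in addition
\begin{equation*}
\delta_4 \le \frac{\delta_3^\prime}{2\max(M_1,M_2,1)},
\end{equation*}
which is a condition independent of $t$ because $M_1,M_2,\delta_3^\prime$ are. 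With this choice, any minimizer and its adjoint together with $\omega$ land inside $\mathcal{U}_{\delta_3^\prime}(\tilde{x},0,0,0)\subset X_t$.

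Third, the uniqueness clause of Lemma \ref{Lemma: Four Cinfty Mappings} then forces
\begin{equation*}
(\bar{x},\bar{v},p)=\bigl(\mathcal{X}_t(\xi,\omega),\mathcal{U}_t(\xi,\omega),\mathcal{P}_t(\xi,\omega)\bigr).
\end{equation*}
Since this reasoning applies to every minimizer, \eqref{Control Problem} admits exactly one, given by $(\mathcal{X}_t(\xi,\omega),\mathcal{U}_t(\xi,\omega))$ with adjoint $\mathcal{P}_t(\xi,\omega)$. The one subtlety I expect to have to be careful about is exactly this matching of constants: the implicit function theorem only yields local uniqueness in $X_t$, so without the time-uniform a priori estimates from Propositions \ref{Proposition: Existence} and \ref{Proposition: FirstOrderOptimalityCondition} one could not rule out a second KKT triple living outside $\mathcal{U}_{\delta_3^\prime}(\tilde{x},0,0,0)$. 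The fact that $M_1$ and $M_2$ do not depend on $t$ is precisely what allows the threshold $\delta_4$ to be chosen independently of $t$, as required.
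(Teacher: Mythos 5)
Your proposal is correct and follows essentially the same route as the paper's proof: existence of a minimizer from Proposition \ref{Proposition: Existence}, the adjoint bound from Proposition \ref{Proposition: FirstOrderOptimalityCondition}, a time-independent reduction of $\delta_4$ so that every minimizing triple together with $\omega$ lies in $\mathcal{U}_{\delta_3^\prime}(\tilde{x},0,0,0)$, and then the local uniqueness clause of Lemma \ref{Lemma: Four Cinfty Mappings}. Your explicit threshold $\delta_4 \le \delta_3^\prime/(2\max(M_1,M_2,1))$ merely makes quantitative what the paper phrases as a ``suitable reduction of $\delta_4$'', and your closing remark on the time-uniformity of $M_1,M_2$ matches the paper's stated motivation exactly.
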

\begin{proof}
	For now set $\delta_4 = \min(\delta_2, \delta_3)$. According to Proposition \ref{Proposition: Existence} there exists a minimizer $(\bar{x},\bar{v})$ and it holds
	\begin{equation*}
		\max\left( \Vert \bar{x} - \Tilde{x} \Vert_{V_t}, \Vert \bar{v} \Vert_{L^2(0,t;\mathbb{R}^m)} \right)
		\leq M_1 \max\left( \Vert \xi \Vert, \Vert \omega \Vert_{L^2(0,t;\mathbb{R}^r)} \right) < M_1 \delta_4.
	\end{equation*}
	Furthermore, from Proposition \ref{Proposition: FirstOrderOptimalityCondition} we obtain existence of the associated adjoint state $p$ which satisfies 
	\begin{equation*}
		\Vert p \Vert_{V_t}
		\leq M_2 \max\left( \Vert \xi \Vert, \Vert \omega \Vert_{L^2(0,t;\mathbb{R}^r)} \right)
		< M_2 \delta_4.
	\end{equation*}
	After a suitable reduction of $\delta_4$ one obtains 
	\begin{equation*}
		\max\left( \Vert \bar{x} - \Tilde{x} \Vert_{V_t}, \Vert \bar{v} \Vert_{L^2(0,t;\mathbb{R}^m)}, \Vert p \Vert_{V_t}, \Vert \omega \Vert_{L^2(0,t;\mathbb{R}^r)} \right)
		< \delta_3^\prime.
	\end{equation*}
	Since $(\bar{x},\bar{v},p)$ is an optimal triple for the control problem (\ref{Control Problem}) given by $ \omega$ and $\xi$, it holds $\Phi_t(\bar{x},\bar{v},p,\omega) = (\omega,0,\xi,0,0,0)$. Because $(\xi,\omega) \in \mathcal{U}_{\delta_4}(0)$, Lemma \ref{Lemma: Four Cinfty Mappings} yields the assertion.
\end{proof}
With the smoothness of the cost functional an application of the chain rule immediately yields regularity of the value function with respect to $\xi$ and $\omega$.
\begin{corollary}\label{Corollary: Value function C infty wrt space}
	For any fixed $t \in (0,T]$ the value function $\mathcal{V}(t,\cdot,\cdot) \colon \mathcal{U}_{\delta_4}(0) \rightarrow \mathbb{R}$ associated with the optimal control problem \eqref{Control Problem}  is of class $C^\infty$.
\end{corollary}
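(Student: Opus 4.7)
The plan is to express $\mathcal{V}(t,\cdot,\cdot)$ as a composition of smooth maps between Banach spaces and then invoke the chain rule. By Proposition \ref{Proposition: Mappings Data to Minimizer}, for every $(\xi,\omega)\in \mathcal{U}_{\delta_4}(0)$ the problem \eqref{Control Problem} admits a \emph{unique} minimizer, and this minimizer coincides with $(\mathcal{X}_t(\xi,\omega),\mathcal{U}_t(\xi,\omega))$. Consequently the value function admits the closed-form representation
\begin{equation*}
	\mathcal{V}(t,\xi,\omega) \;=\; J\bigl(\mathcal{X}_t(\xi,\omega),\, \mathcal{U}_t(\xi,\omega);\, t,\, \omega\bigr)
\end{equation*}
on $\mathcal{U}_{\delta_4}(0)$. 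The task then reduces to showing smoothness of the two ingredients of this composition.

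Next I would verify that the reduced cost functional $\widetilde{J}_t\colon V_t \times L^2(0,t;\mathbb{R}^m) \times L^2(0,t;\mathbb{R}^r) \to \mathbb{R}$ defined by $\widetilde{J}_t(x,v,\omega)\coloneqq J(x,v;t,\omega)$ is of class $C^\infty$. This is immediate from its structure: the term $\tfrac{1}{2}\|x(0)-x_0\|^2$ is the pull-back of a quadratic polynomial on $\mathbb{R}^n$ along the continuous trace $V_t \ni x \mapsto x(0)\in\mathbb{R}^n$, while $\tfrac{1}{2}\int_0^t\|v\|^2\dd s$ and $\tfrac{\alpha}{2}\int_0^t\|\omega - C(x-\tilde{x})\|^2\dd s$ are continuous quadratic forms on $L^2(0,t;\mathbb{R}^m)$ and on $L^2(0,t;\mathbb{R}^r)\times V_t$, respectively (the last factor uses the continuous embedding $V_t \hookrightarrow L^2(0,t;\mathbb{R}^n)$). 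Sums of continuous polynomials of degree at most two between Banach spaces are automatically of class $C^\infty$, with all derivatives of order $\geq 3$ vanishing.

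Finally, Lemma \ref{Lemma: Four Cinfty Mappings} provides that $\mathcal{X}_t$ and $\mathcal{U}_t$ are $C^\infty$ on $\mathcal{U}_{\delta_3}(0)$, and since $\delta_4\leq \delta_3$ by construction, they are in particular $C^\infty$ on $\mathcal{U}_{\delta_4}(0)$. The map
\begin{equation*}
	\mathcal{U}_{\delta_4}(0) \ni (\xi,\omega) \;\longmapsto\; \bigl(\mathcal{X}_t(\xi,\omega),\, \mathcal{U}_t(\xi,\omega),\, \omega\bigr)\in V_t \times L^2(0,t;\mathbb{R}^m)\times L^2(0,t;\mathbb{R}^r)
\end{equation*}
is then $C^\infty$, and composing with $\widetilde{J}_t$ and applying the Banach-space chain rule yields that $\mathcal{V}(t,\cdot,\cdot)$ is $C^\infty$ on $\mathcal{U}_{\delta_4}(0)$, as claimed.

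There is no substantive obstacle at this stage: the corollary is essentially a bookkeeping consequence of the previous two results. The genuinely difficult work lies upstream in verifying the hypotheses of the quantitative implicit function theorem of \cite{Hol70} for $\Phi_t$ with constants independent of $t$, which has already been carried out in Lemmas \ref{lem: DPhi} and \ref{Lemma: Four Cinfty Mappings}.
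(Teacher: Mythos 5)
Your proposal is correct and matches the paper's argument: the paper likewise obtains the corollary by combining the representation $\mathcal{V}(t,\xi,\omega) = J(\mathcal{X}_t(\xi,\omega),\mathcal{U}_t(\xi,\omega);t,\omega)$ from Proposition \ref{Proposition: Mappings Data to Minimizer} with the $C^\infty$ regularity of $\mathcal{X}_t$ and $\mathcal{U}_t$ from Lemma \ref{Lemma: Four Cinfty Mappings} and the smoothness of the quadratic cost, via the chain rule. Your write-up merely makes explicit the bookkeeping (smoothness of the reduced cost as a continuous quadratic polynomial on Banach spaces, and $\delta_4 \leq \delta_3$) that the paper leaves implicit.
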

We conclude this subsection by a representation of the optimal control evaluated in the final time in terms of the gradient of the value function. Since the proof is done using well-know techniques, see \cite{CanF91}, it is omitted here.
\begin{corollary}\label{Corollary: Feedback Representation Optimal Control}
	Let $t \in (0,T]$ and $(\xi,\omega) \in \mathcal{U}_{\tfrac{1}{2}\delta_4}(0)$. Then the adjoint $ p(t) = \mathcal{P}_t(\xi,\omega)(t)$ and the minimizing control $\bar{v}(t) = \mathcal{U}_t(\xi,\omega)(t)$ evaluated in the final time $t$ are characterized by
	\begin{equation*}
		p(t) = - \nabla_\xi \mathcal{V} (t, \xi, \omega) \text{  and  }
		\bar{v}(t) = F^\top \nabla_\xi \mathcal{V}(t,\xi,\omega).
	\end{equation*}
\end{corollary}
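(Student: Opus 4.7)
The plan is to combine the smooth-dependence maps from Proposition \ref{Proposition: Mappings Data to Minimizer} with a standard adjoint calculation. Restricting to $\mathcal{U}_{\tfrac{1}{2}\delta_4}(0)$ guarantees that perturbations $\xi + h \eta$ still lie in $\mathcal{U}_{\delta_4}(0)$ for small $h$, so that on a neighborhood of $(\xi,\omega)$ we may write
\begin{equation*}
\mathcal{V}(t,\xi,\omega) = J\bigl(\mathcal{X}_t(\xi,\omega),\mathcal{U}_t(\xi,\omega);t,\omega\bigr)
\end{equation*}
with $C^\infty$ dependence on $\xi$. Fix $\eta \in \mathbb{R}^n$ and set $(z,u) \coloneqq (D_\xi \mathcal{X}_t(\xi,\omega)\eta, D_\xi \mathcal{U}_t(\xi,\omega)\eta)$. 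Differentiating the state equation in \eqref{Phi Definition} shows that $(z,u)$ satisfies the linearized state equation
\begin{equation*}
\dot z = A z + G(\bar{x} \otimes z) + G(z \otimes \bar{x}) + F u, \qquad z(t) = \eta,
\end{equation*}
where $\bar{x} = \mathcal{X}_t(\xi,\omega)$.

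Next I would apply the chain rule to $\mathcal{V}$ and obtain
\begin{equation*}
\langle \nabla_\xi \mathcal{V}(t,\xi,\omega), \eta \rangle = \langle \bar{x}(0) - x_0, z(0) \rangle + \int_0^t \langle \bar{v}, u \rangle - \alpha\,\bigl\langle C^\top(\omega - C(\bar{x} - \tilde{x})), z\bigr\rangle \dd s.
\end{equation*}
In parallel, compute $\frac{d}{ds}\langle p, z \rangle$ with $p = \mathcal{P}_t(\xi,\omega)$. Using the adjoint equation \eqref{Adjoint1} and the linearized state equation, the $A$-term and the two Kronecker $G$-terms cancel via the identities $\langle p, G(\bar{x}\otimes z)\rangle = \langle (\bar{x}^\top \otimes I_n) G^\top p, z\rangle$ and $\langle p, G(z \otimes \bar{x})\rangle = \langle (I_n \otimes \bar{x}^\top) G^\top p, z\rangle$, leaving
\begin{equation*}
\tfrac{d}{ds}\langle p, z \rangle = \alpha\,\bigl\langle C^\top(\omega - C(\bar{x} - \tilde{x})), z\bigr\rangle + \langle p, F u \rangle.
\end{equation*}
Integrating from $0$ to $t$, invoking the boundary conditions $z(t) = \eta$ and $p(0) = x_0 - \bar{x}(0)$, and using the pointwise optimality relation $F^\top p = -\bar{v}$ from \eqref{OptimalityConditionFormula}, one obtains
\begin{equation*}
\langle p(t), \eta \rangle = -\langle \bar{x}(0) - x_0, z(0) \rangle + \int_0^t \alpha\,\bigl\langle C^\top(\omega - C(\bar{x} - \tilde{x})), z\bigr\rangle - \langle \bar{v}, u \rangle \dd s.
\end{equation*}
Comparing the two expressions gives $\langle p(t), \eta\rangle = -\langle \nabla_\xi \mathcal{V}(t,\xi,\omega), \eta\rangle$ for every $\eta \in \mathbb{R}^n$, which is the first claimed identity. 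The second follows immediately by evaluating $\bar{v} = -F^\top p$ at $s = t$, which is legitimate since $p \in V_t$ and hence admits a continuous representative; by \eqref{OptimalityConditionFormula} the same is true of $\bar{v}$.

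The main obstacle I anticipate is not an analytic one but a bookkeeping one: verifying carefully that the Kronecker product manipulations produce the precise cancellations between $\dot p$ and $\dot z$, and then matching signs so that the chain-rule expression for $\nabla_\xi \mathcal V$ coincides with $-\langle p(t), \eta\rangle$. All of this is routine given Propositions \ref{Proposition: FirstOrderOptimalityCondition} and \ref{Proposition: Mappings Data to Minimizer}, which is why the authors omit the detailed calculation and simply reference \cite{CanF91}.
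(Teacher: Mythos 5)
Your proof is correct: the chain-rule expression for $\nabla_\xi \mathcal{V}$, the linearized state equation for $(z,u)$ with terminal condition $z(t)=\eta$, the cancellation in $\tfrac{d}{ds}\langle p,z\rangle$ via the Kronecker transpose identities, and the sign bookkeeping (note $p(0)=x_0-\bar{x}(0)$, reflecting the final-value formulation) all check out, and evaluating $\bar v=-F^\top p$ at $s=t$ is justified since $p\in V_t\hookrightarrow C([0,t];\mathbb{R}^n)$. This is precisely the standard adjoint-state representation of the value-function gradient that the paper invokes by citing \cite{CanF91}, so your argument fills in the omitted proof along the intended lines.
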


\subsection{Regularity in time}
This subsection aims at showing time regularity of the value function. One of the essential steps is to prove time uniform bounds for the partial derivatives of the value function $\mathcal{V}$. This will be done by a characterization of the triples of partial derivatives of $\mathcal{X}_t$, $\mathcal{U}_t$ and $\mathcal{P}_t$ as solutions of linear quadratic optimal control problems which is presented in Appendix \ref{Appendix A}.
\begin{proposition}\label{Proposition: Bounds for derivatives of the value function}
	Let $t \in (0,T]$ and $(\xi, \omega) \in \mathcal{U}_{\delta_4}(0)$. Let $\mu \in L^2(0,t;\mathbb{R}^r)$ and $z_1,z_2,z_3,z_4 \in \mathbb{R}^n$ and for $i = 1,...,4$ denote $z^i = (z_1,...,z_i)$. Then there exist $\check{M}_1, \check{M}_2, \check{M}_3, \check{M}_4, \check{M}_5,\check{M}_6 > 0$ independent of $t$ such that for $i=1,...,4$ it holds
	\begin{equation*}
		\begin{aligned}
			\left\vert D^i_{\xi^i}\mathcal{V}(t,\xi,\omega)z^i \right\vert 
			&\leq \check{M}_i \prod_{k = 1}^i \Vert z_k \Vert,\\
			\left\vert D^2_{\omega \xi}\mathcal{V}(t,\xi,\omega)(\mu, z_1) \right\vert 
			&\leq \check{M}_5 \Vert \mu \Vert_{L^2(0,t;\mathbb{R}^r)} \Vert z_1 \Vert,\\
			\left\vert D^3_{\omega \xi^2}\mathcal{V}(t,\xi,\omega)(\mu, z_2,z_1) \right\vert 
			&\leq \check{M}_6 \Vert \mu \Vert_{L^2(0,t;\mathbb{R}^r)} \Vert z_1 \Vert \Vert z_2 \Vert.
		\end{aligned}
	\end{equation*}
\end{proposition}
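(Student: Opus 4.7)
The plan is to exploit the feedback representation
\[
\nabla_\xi \mathcal V(t,\xi,\omega) = -\mathcal P_t(\xi,\omega)(t)
\]
from Corollary \ref{Corollary: Feedback Representation Optimal Control}. Differentiating this identity up to three times in $\xi$ and $\omega$ reduces each of the six claims to a pointwise bound on $\mathcal P_t$ or on a $\xi$/$\omega$-derivative of $\mathcal P_t$ of total order at most three, evaluated at $s=t$. Because $\|\cdot\|_{V_t}$ dominates $\|\cdot\|_{L^\infty(0,t;\mathbb R^n)}$ with a $t$-uniform constant, it suffices to produce $t$-uniform $V_t$-bounds on these derivatives with the correct multilinear dependence on $z_1,\dots,z_4$ and $\mu$. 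The bound for $i=1$ is then immediate from Proposition \ref{Proposition: FirstOrderOptimalityCondition}.

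I would obtain these $V_t$-bounds inductively by differentiating the defining identity
\[
\Phi_t(\mathcal X_t(\xi,\omega),\mathcal U_t(\xi,\omega),\mathcal P_t(\xi,\omega),\omega) = (\omega,0,\xi,0,0,0).
\]
A single $\xi$-differentiation along $z_1$ yields the linear system
\[
D\Phi_t(\mathcal X_t,\mathcal U_t,\mathcal P_t,\omega)(D_\xi\mathcal X_t z_1,\,D_\xi\mathcal U_t z_1,\,D_\xi\mathcal P_t z_1,\,0) = (0,0,z_1,0,0,0),
\]
which, as in the proof of Lemma \ref{lem: DPhi}, is the KKT system of a linear-quadratic control problem of the type covered by Proposition \ref{Proposition: Appendix A}, now linearized about $\breve x_t = \mathcal X_t(\xi,\omega)$ with nominal adjoint $\rho = \mathcal P_t(\xi,\omega)$ rather than about $\tilde x$ and $0$. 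Since Propositions \ref{Proposition: Existence} and \ref{Proposition: FirstOrderOptimalityCondition} bound $\|\mathcal X_t - \tilde x\|_{V_t}$ and $\|\mathcal P_t\|_{V_t}$ uniformly in $t$ and in $(\xi,\omega)\in\mathcal U_{\delta_4}(0)$, Proposition \ref{Proposition: Appendix A} applies with a constant depending only on $\|\tilde x\|_{L^\infty(0,T;\mathbb R^n)}$ and $\delta_4$, giving $\|D_\xi\mathcal P_t z_1\|_{V_t}\leq c\|z_1\|$. Differentiation in $\omega$ along $\mu$ instead feeds an $\alpha C^\top \mu$ source into the adjoint equation of the same linearized system, yielding $\|D_\omega\mathcal P_t\mu\|_{V_t}\leq c\|\mu\|_{L^2(0,t;\mathbb R^r)}$; together these furnish the bounds for $D^2_{\xi\xi}\mathcal V$ and $D^2_{\omega\xi}\mathcal V$.

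Higher derivatives follow by further differentiation. At each stage the top-order derivative triple of $(\mathcal X_t,\mathcal U_t,\mathcal P_t)$ still satisfies the same linearized KKT system, but with additional inhomogeneous sources produced by differentiating the quadratic pieces $G(x\otimes x)$ and $(x^\top\otimes I_n)G^\top p + (I_n\otimes x^\top)G^\top p$ of $\Phi_t$. These sources are multilinear in strictly lower-order derivatives of $\mathcal X_t$ and $\mathcal P_t$, which the induction hypothesis has already controlled with the desired product dependence on the $\|z_k\|$'s and $\|\mu\|_{L^2(0,t;\mathbb R^r)}$; estimating them in $L^2(0,t;\mathbb R^n)$ via Hölder's inequality together with the $t$-uniform embedding $V_t\hookrightarrow L^\infty(0,t;\mathbb R^n)$, and then applying Proposition \ref{Proposition: Appendix A} once more, closes the induction. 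Evaluating the $\mathcal P_t$-component at $s=t$ and pairing with $z_1$ then yields the stated pointwise bounds. The principal obstacle I anticipate is the combinatorial bookkeeping of these lower-order source terms under repeated differentiation and verifying that every product carries a $t$-independent constant; the structural reason this works is precisely the definition of $V_t$ with a $t$-uniform $L^\infty$-embedding, which avoids the degeneration of the Sobolev embedding constant as $t\to 0^+$ flagged in the remark after Proposition \ref{Proposition:ErrorSolvability}.
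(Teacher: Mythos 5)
Your proposal is correct in substance, and its technical engine is in fact the same as the paper's: differentiating the identity $\Phi_t(\mathcal{X}_t(\xi,\omega),\mathcal{U}_t(\xi,\omega),\mathcal{P}_t(\xi,\omega),\omega)=(\omega,0,\xi,0,0,0)$, recognizing each derivative triple as the solution of the KKT system \eqref{Appendix KKT system} linearized about $\breve{x}_t=\mathcal{X}_t(\xi,\omega)$ with $\rho=\mathcal{P}_t(\xi,\omega)$, and invoking the $t$-uniform estimate of Proposition \ref{Proposition: Appendix A} is precisely the statement and proof of Lemma \ref{Lemma: Bounds for derivatives}, including your identification of the $\alpha C^\top\mu$ source for the $\omega$-derivative and the multilinear lower-order sources for higher derivatives. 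Where you genuinely diverge is the last reduction step: the paper differentiates the cost identity $\mathcal{V}(t,\xi,\omega)=J(\mathcal{X}_t(\xi,\omega),\mathcal{U}_t(\xi,\omega);t,\omega)$ and applies the chain rule together with Proposition \ref{Proposition: Existence} and Lemma \ref{Lemma: Bounds for derivatives} (hence needs derivative bounds on the triple up to order four), whereas you differentiate the endpoint identity $\nabla_\xi\mathcal{V}(t,\xi,\omega)=-\mathcal{P}_t(\xi,\omega)(t)$ from Corollary \ref{Corollary: Feedback Representation Optimal Control}. Since evaluation at $s=t$ is a bounded linear map on $V_t$ (the $V_t$-norm dominates the sup norm uniformly in $t$), it commutes with Fr\'{e}chet differentiation, so your route is legitimate and needs derivatives of $\mathcal{P}_t$ only up to order three --- a modest economy --- at the price of routing everything through the feedback representation rather than the elementary cost identity.

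Two points need patching before your argument covers the stated claim. First, Corollary \ref{Corollary: Feedback Representation Optimal Control} is stated only on $\mathcal{U}_{\tfrac{1}{2}\delta_4}(0)$, while the proposition asserts the bounds on all of $\mathcal{U}_{\delta_4}(0)$; the larger ball is genuinely used later (the proofs of Proposition \ref{Proposition: Continuity of the Value Function}, Proposition \ref{Proposition: Space derivatives continuous in time} and Theorem \ref{Theorem: HJB holds} apply these bounds at Taylor intermediate points that may leave $\mathcal{U}_{\tfrac{1}{2}\delta_4}(0)$). The corollary's proof is local, so it extends to the open ball $\mathcal{U}_{\delta_4}(0)$ where Lemma \ref{Lemma: Four Cinfty Mappings} and Proposition \ref{Proposition: Mappings Data to Minimizer} provide the smooth triple, but you must state and use this extension. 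Second, Proposition \ref{Proposition: Appendix A} requires $\Vert\rho\Vert_{L^\infty(0,t;\mathbb{R}^n)}\leq\beta$; with $\rho=\mathcal{P}_t(\xi,\omega)$ this amounts to $M_2\delta_4\leq\beta$ via Proposition \ref{Proposition: FirstOrderOptimalityCondition} and may force a further reduction of $\delta_4$ --- the proof of Lemma \ref{Lemma: Bounds for derivatives} performs exactly this reduction, which your sketch assumes silently.
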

\begin{proof}
	For any $t \in(0,T]$ and $(\xi,\omega) \in \mathcal{U}_{\delta_4}(0)$ it holds
	\begin{equation*}
		\mathcal{V}(t,\xi,\omega) = J(\mathcal{X}_t(\xi,\omega), \mathcal{U}_t(\xi,\omega);t,\omega).
	\end{equation*}
	Forming the appropriate derivatives of this equation, applying the chain rule to the derivative on the right hand side and utilizing the bounds from Proposition \ref{Proposition: Existence} and Lemma \ref{Lemma: Bounds for derivatives} yields the assertion for $t > 0$. Noting that $\mathcal{V}(0,\xi,\omega) = \frac{1}{2} \Vert \xi \Vert^2$ concludes the proof.
\end{proof}
With these tools at hand it is now possible to show time continuity of the value function. To obtain this result we assume $\omega$ to be essentially bounded.
\begin{proposition}\label{Proposition: Continuity of the Value Function}
	Let $(\xi,\omega) \in \mathcal{U}_{\tfrac{1}{2}\delta_4}(0) \subset \mathbb{R}^n \times L^2(0,T;\mathbb{R}^r)$ and assume $\omega \in L^\infty(0,T;\mathbb{R}^r)$. Then the mapping $t \mapsto \mathcal{V}(t,\xi,\omega)$ is continuous in $[0,T]$.
\end{proposition}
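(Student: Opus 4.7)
The plan is to prove continuity at an arbitrary $t_0 \in [0,T]$ by sandwiching the increment $\mathcal{V}(t,\xi,\omega)-\mathcal{V}(t_0,\xi,\omega)$ between two quantities that vanish as $t \to t_0$. Two ingredients drive the argument: a uniform-in-$t$ Lipschitz constant $\check{M}_1$ for $\mathcal{V}(t,\cdot,\omega)$ from \Cref{Proposition: Bounds for derivatives of the value function}, and uniform-in-$t$ $L^\infty$-bounds on the minimizing triple $(\bar x_t,\bar v_t,p_t) = (\mathcal{X}_t(\xi,\omega),\mathcal{U}_t(\xi,\omega),\mathcal{P}_t(\xi,\omega))$. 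The latter I obtain by combining the $V_t$-estimates of \Cref{Proposition: Existence,Proposition: FirstOrderOptimalityCondition} with the relation $\bar v_t = -F^\top p_t$ from \eqref{OptimalityConditionFormula}; inserting these into the state equation then produces a uniform $L^\infty$-bound on $\dot{\bar x}_t$, whence $\bar x_t$ is Lipschitz continuous with a constant independent of $t$.

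For the restriction estimate I take $t_1<t_2$ in $(0,T]$ and the minimizer $(\bar x_{t_2},\bar v_{t_2})$ at horizon $t_2$. The pair $(\bar x_{t_2}|_{[0,t_1]},\bar v_{t_2}|_{[0,t_1]})$ is admissible for the problem at horizon $t_1$ with terminal datum $\eta \coloneqq \bar x_{t_2}(t_1)-\tilde x(t_1)$, and $\|\eta-\xi\|=\|\bar x_{t_2}(t_1)-\bar x_{t_2}(t_2)+\tilde x(t_2)-\tilde x(t_1)\|=O(|t_2-t_1|)$ by the preceding Lipschitz property. Additivity of the cost yields
\begin{equation*}
\mathcal{V}(t_2,\xi,\omega) = J\!\left(\bar x_{t_2}|_{[0,t_1]},\bar v_{t_2}|_{[0,t_1]};t_1,\omega\right) + \tfrac{1}{2}\!\int_{t_1}^{t_2} \|\bar v_{t_2}\|^2 + \alpha\|\omega - C(\bar x_{t_2}-\tilde x)\|^2\,\mathrm{d}s,
\end{equation*}
and the remainder integral is $O(|t_2-t_1|)$ by the above $L^\infty$-bounds together with $\omega\in L^\infty(0,T;\mathbb{R}^r)$. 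Since $J(\bar x_{t_2}|_{[0,t_1]},\bar v_{t_2}|_{[0,t_1]};t_1,\omega)\ge\mathcal{V}(t_1,\eta,\omega)$ and $|\mathcal{V}(t_1,\eta,\omega)-\mathcal{V}(t_1,\xi,\omega)|\le\check{M}_1\|\eta-\xi\|$ once $(\eta,\omega)\in\mathcal{U}_{\delta_4}(0)$, this delivers $\mathcal{V}(t_2,\xi,\omega)-\mathcal{V}(t_1,\xi,\omega)\ge-O(|t_2-t_1|)$.

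For the matching extension estimate I prolong the minimizer $(\bar x_{t_1},\bar v_{t_1})$ at horizon $t_1$ to $[0,t_2]$ by setting $v\equiv 0$ on $[t_1,t_2]$ and propagating the state forward under $\dot x=Ax+G(x\otimes x)$ from $\bar x_{t_1}(t_1)=\tilde x(t_1)+\xi$. For $|t_2-t_1|$ small this forward ODE admits a unique solution $\hat x$, and the resulting pair is admissible for horizon $t_2$ with terminal datum $\hat\xi\coloneqq\hat x(t_2)-\tilde x(t_2)$ satisfying $\|\hat\xi-\xi\|=O(|t_2-t_1|)$. Its cost differs from $\mathcal{V}(t_1,\xi,\omega)$ only by $\tfrac{\alpha}{2}\int_{t_1}^{t_2}\|\omega-C(\hat x-\tilde x)\|^2\,\mathrm{d}s=O(|t_2-t_1|)$, hence $\mathcal{V}(t_2,\hat\xi,\omega)\le\mathcal{V}(t_1,\xi,\omega)+O(|t_2-t_1|)$, and the $\xi$-Lipschitz bound at horizon $t_2$ gives $\mathcal{V}(t_2,\xi,\omega)-\mathcal{V}(t_1,\xi,\omega)\le O(|t_2-t_1|)$. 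Taking $(t_1,t_2)=(t_0,t)$ or $(t,t_0)$ as appropriate handles both $t>t_0$ and $t<t_0$, and the case $t_0=0$ is dispatched using $\mathcal{V}(0,\xi,\omega)=\tfrac12\|\xi\|^2$ together with the same two constructions applied to the trivial trajectory $v\equiv 0$ on $[0,t]$.

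The hard part is keeping every constant honestly independent of $t$---the Lipschitz constant of $\mathcal{V}(t,\cdot,\omega)$, the $L^\infty$-bounds on $(\bar x_t,\bar v_t)$, and the Lipschitz constant of $\bar x_t$---because the $O(|t-t_0|)$ remainder is only meaningful if none of these hidden constants blows up as $t\to t_0$. This uniformity is precisely what the time-uniform sensitivity analysis of \Cref{Section: Sensitivity analysis} was set up to deliver, so the present proof essentially amounts to cashing it in. A minor side-point is to verify that the perturbed terminal states $\eta$ and $\hat\xi$ remain in $\mathcal{U}_{\delta_4}(0)$, which holds for $|t-t_0|$ small enough since $(\xi,\omega)\in\mathcal{U}_{\delta_4/2}(0)$ and the perturbations from $\xi$ are of order $O(|t-t_0|)$.
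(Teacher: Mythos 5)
Your proposal is correct, and it takes a recognizably different route from the paper on one half of the argument. The paper's proof invokes Bellman's principle as an \emph{equality} in both time directions: for the right limit it writes $\mathcal{V}(t+\tau,\xi,\omega)-\mathcal{V}(t,\xi,\omega)$ exactly as $\mathcal{V}(t,\mathcal{X}_{t+\tau}(\xi,\omega)(t)-\tilde{x}(t),\omega)-\mathcal{V}(t,\xi,\omega)$ plus the running cost over $[t,t+\tau]$, and analogously for the left limit with the horizon-$t$ minimizer restricted to $[0,t-\tau]$; both summands are then sent to zero using precisely the ingredients you identified, namely the uniform $L^\infty$ bound on $\dot{\mathcal{X}}$ derived from \Cref{Proposition: Existence,Proposition: FirstOrderOptimalityCondition} together with $\bar v=-F^\top p$, the uniform gradient bound $\check{M}_1$ of \Cref{Proposition: Bounds for derivatives of the value function} (respectively plain continuity of $\mathcal{V}(t,\cdot,\omega)$ for the right limit), and $\omega\in L^\infty$ for the integral term. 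You instead use only one-sided competitor arguments: the restriction of the horizon-$t_2$ minimizer (the trivial half of the dynamic programming principle, where the nonnegative tail integral can in fact simply be dropped) for the lower bound, and an explicit zero-control forward extension for the upper bound, where the paper never needs to extend trajectories at all. What your variant buys is a quantitative two-sided $O(|t_2-t_1|)$ estimate, i.e.\ local Lipschitz continuity of $t\mapsto\mathcal{V}(t,\xi,\omega)$, strictly stronger than the stated continuity; this is legitimate because $\tilde{x}$ is itself Lipschitz, as $\dot{\tilde{x}}=A\tilde{x}+G(\tilde{x}\otimes\tilde{x})\in L^\infty(0,T;\mathbb{R}^n)$. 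The price is the extra construction: you must solve the uncontrolled ODE forward from $\tilde{x}(t_1)+\xi$ on an interval whose length and solution bound are uniform in $t_1$, which does follow from the uniform bound $\|\tilde{x}\|_{L^\infty(0,T;\mathbb{R}^n)}+\tfrac12\delta_4$ on the initial datum (e.g.\ via the fixed-point argument of \Cref{Proposition:ErrorSolvability}), but deserves to be spelled out. The remaining small verifications you already flag correctly: the mean-value estimate with $\check{M}_1$ needs the whole segments $[\xi,\eta]$ and $[\xi,\hat{\xi}]$ inside $\mathcal{U}_{\delta_4}(0)$, which convexity of the ball supplies for $|t_2-t_1|$ small, and $(\eta,\omega)$, $(\hat{\xi},\omega)$ must lie in the domain where $\mathcal{V}$ is well-defined; your treatment of $t_0=0$ via $\mathcal{V}(0,\xi,\omega)=\tfrac12\|\xi\|^2$ matches the paper's concluding remark.
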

\begin{proof}
	Let $t \in [0,T)$ be arbitrary. Then for sufficiently small $\tau > 0$ it holds $t + \tau \in (0,T)$ and with Bellman's principle it holds
	\begin{equation}\label{RightLimitContinuity}
		\begin{aligned}
			\mathcal{V}(t+\tau,\xi,\omega) - \mathcal{V}(t,\xi,\omega)
			= \mathcal{V}(t, \mathcal{X}_{t+\tau}(\xi,\omega)(t) - \Tilde{x}(t),\omega) - \mathcal{V}(t,\xi,\omega)&\\
			+ \, \frac{1}{2} \int_t^{t+\tau} \left\Vert \mathcal{U}_{t+\tau}(\xi,\omega) \right\Vert^2 + \alpha \left\Vert \omega - C \left( \mathcal{X}_{t+\tau}(\xi,\omega) - \Tilde{x} \right) \right\Vert^2 \,\mathrm{d}s.&
		\end{aligned}
	\end{equation}
	To see that for $\tau \searrow 0$ the first summand converges to zero, note that $\mathcal{X}_{t+\tau}(\xi,\omega) \in C^1([0,t+\tau];\mathbb{R}^n)$. This is due to the fact that the optimal control $\mathcal{U}_{t+\tau}(\xi,\omega) = - F^\top \mathcal{P}_{t+\tau}(\xi,\omega)$ is continuous. It follows
	\begin{equation}\label{instantoCall}
		\begin{aligned}
			\Vert \mathcal{X}_{t+\tau}(\xi,\omega)(t) - \Tilde{x}(t) - \xi \Vert
			&= \left\Vert \mathcal{X}_{t+\tau}(\xi,\omega)(t+\tau) - \int_t^{t+\tau} \dot{\mathcal{X}}_{t+\tau}(\xi,\omega)(s) \,\mathrm{d}s- \Tilde{x}(t) - \xi \right\Vert\\
			&\leq  \Vert \Tilde{x}(t+\tau) - \Tilde{x}(t) \Vert + \tau \Vert \dot{\mathcal{X}}_{t+\tau} (\xi,\omega) \Vert_{L^\infty(0,t+\tau;\mathbb{R}^n)}.
		\end{aligned}
	\end{equation}
	For any $s \in [0,t+\tau]$ it holds
	\begin{equation*}
		\begin{aligned}
			\Vert \dot{\mathcal{X}}_{t+\tau} (\xi,\omega)(s) \Vert
			&\leq \Vert A \mathcal{X}_{t+\tau} (\xi,\omega)(s) \Vert + \Vert G( \mathcal{X}_{t+\tau} (\xi,\omega)(s) \otimes \mathcal{X}_{t+\tau} (\xi,\omega)(s) ) \Vert\\ 
			&+ \Vert F \mathcal{U}_{t+\tau}(\xi,\omega)(s) \Vert\\
			&\leq \Vert A \Vert_2 \Vert \mathcal{X}_{t+\tau} (\xi,\omega) \Vert_{L^\infty(0,t+\tau;\mathbb{R}^n)}
			+ \Vert G \Vert_2 \Vert \mathcal{X}_{t+\tau} (\xi,\omega) \Vert_{L^\infty(0,t+\tau;\mathbb{R}^n)}^2\\
			&+ \Vert F \Vert_2^2 \Vert \mathcal{P}_{t+\tau} (\xi,\omega) \Vert_{L^\infty(0,t+\tau;\mathbb{R}^n)}.
		\end{aligned}
	\end{equation*}
	By Proposition \ref{Proposition: Existence} and Proposition \ref{Proposition: FirstOrderOptimalityCondition} it follows
	\begin{equation*}
		\begin{aligned}
			\Vert \dot{\mathcal{X}}_{t+\tau} (\xi,\omega)(s) \Vert
			&\leq \Vert A \Vert_2 \left( \Vert \tilde{x} \Vert_{L^\infty(0,t+\tau;\mathbb{R}^n)} + M_1 \max \left( \Vert \xi \Vert, \Vert \omega \Vert_{L^2(0,t+\tau,\mathbb{R}^r)} \right) \right)\\
			&+ 2 \Vert G \Vert_2 \left( \Vert \tilde{x} \Vert_{L^\infty(0,t+\tau;\mathbb{R}^n)}^2 + M_1^2 \max \left( \Vert \xi \Vert^2, \Vert \omega \Vert_{L^2(0,t+\tau,\mathbb{R}^r)}^2 \right) \right)\\
			&+ M_2 \Vert F \Vert_2^2  \max \left( \Vert \xi \Vert^2, \Vert \omega \Vert_{L^2(0,t+\tau,\mathbb{R}^r)}^2 \right).
		\end{aligned}
	\end{equation*}
	Since $t+\tau < T$, there exists some $c>0$ independent of $\tau$ such that
	\begin{equation*}
		\Vert \dot{\mathcal{X}}_{t+\tau}(\xi,\omega) \Vert_{L^\infty(0,t+\tau;\mathbb{R}^n)} \leq c
	\end{equation*}
	holds for all sufficiently small $\tau$. Therefore the right hand side of (\ref{instantoCall}) tends to zero for $\tau \searrow 0$. The fact that $\nu \mapsto \mathcal{V}(t,\nu,\omega)$ is continuous in $\nu = \xi$ then implies that the first summand of (\ref{RightLimitContinuity}) goes to zero for $\tau \searrow 0$.\\
	For the second summand it holds
	\begin{equation*}
		\begin{aligned}
			&\int_t^{t+\tau} \left\Vert \mathcal{U}_{t+\tau}(\xi,\omega) \right\Vert^2 + \alpha \left\Vert \omega - C \left( \mathcal{X}_{t+\tau}(\xi,\omega) - \Tilde{x} \right) \right\Vert^2 \,\mathrm{d}s\\
			&\leq \int_t^{t+\tau} \Vert F \Vert_2^2 \Vert \mathcal{P}_{t+\tau}(\xi,\omega) \Vert^2 
			+ 2 \alpha \Vert \omega \Vert_{L^\infty(0,T;\mathbb{R}^r)}^2 
			+ 2 \alpha \Vert C \Vert_2^2 \Vert \mathcal{X}_{t+\tau}(\xi,\omega) - \Tilde{x} \Vert^2 \,\mathrm{d}s\\
			&\leq \tau \left( \Vert F \Vert_2^2 \Vert \mathcal{P}_{t+\tau}(\xi,\omega) \Vert_{L^\infty(0,t+\tau;\mathbb{R}^n)}^2
			+ 2\alpha \Vert \omega \Vert_{L^\infty(0,T;\mathbb{R}^r)}^2 \right.\\
			&+ \left. 2 \alpha \Vert C \Vert_2^2 \Vert \mathcal{X}_{t+\tau}(\xi,\omega) - \Tilde{x} \Vert_{L^\infty(0,t+\tau;\mathbb{R}^n)}^2 \right).
		\end{aligned}
	\end{equation*}
	With Proposition \ref{Proposition: Existence} and Proposition \ref{Proposition: FirstOrderOptimalityCondition} all terms on the right hand side can be estimated from above by a constant independent of $\tau$. Hence the right hand side goes to zero for $\tau \searrow 0$. This shows that for all $t \in [0,T)$ it holds
	\begin{equation}\label{LimitFromTheRight}
		\mathcal{V}(t+\tau,\xi,\omega) \rightarrow \mathcal{V}(t,\xi,\omega),~~~~~\text{for}~~~~~ \tau \searrow 0.
	\end{equation}
	Now let $t \in (0,T]$ be fixed but arbitrary. Then for sufficiently small $\tau > 0$ it holds $t - \tau \in (0,T)$. With Bellman's principle it follows
	\begin{equation}\label{LeftLimitContinuity} 
		\begin{aligned}
			\mathcal{V}(t,\xi,\omega) - \mathcal{V}(t-\tau,\xi,\omega)
			= \mathcal{V}(t-\tau, \mathcal{X}_t(\xi,\omega)(t-\tau) - \Tilde{x}(t-\tau), \omega) - \mathcal{V}(t-\tau,\xi,\omega)&\\
			+ \int_{t-\tau}^t \Vert \mathcal{U}_t(\xi,\omega) \Vert^2 + \alpha \Vert \omega - C(\mathcal{X}_t(\xi,\omega) - \Tilde{x}) \Vert^2 \,\mathrm{d}s.&
		\end{aligned}
	\end{equation}
	To see convergence of the first summand note that
	\begin{equation*}
		\begin{aligned}
			\mathcal{X}_t(\xi,\omega)(t-\tau) - \Tilde{x}(t-\tau)
			&= \mathcal{X}_t(\xi,\omega)(t) - \int_{t-\tau}^t \dot{\mathcal{X}}_t(\xi,\omega)(s) \,\mathrm{d}s - \Tilde{x}(t-\tau)\\
			&= \xi + \Tilde{x}(t) - \Tilde{x}(t-\tau) - \int_{t-\tau}^t \dot{\mathcal{X}}_t(\xi,\omega)(s) \,\mathrm{d}s.
		\end{aligned}
	\end{equation*}
	Since $\mathcal{X}_t(\xi,\omega) \in C^1([0,t];\mathbb{R}^n)$, one has that $\Vert \dot{\mathcal{X}}_{t}(\xi,\omega) \Vert_{L^\infty(0,t;\mathbb{R}^n)}$ is finite. From here on assume that $\tau$ is small enough such that $t-\tau \in (0,T)$ and
	\begin{equation*}
		\Vert \mathcal{X}_t(\xi,\omega)(t-\tau) - \Tilde{x}(t-\tau) \Vert
		\leq \Vert \xi \Vert + \Vert \tilde{x}(t) - \tilde{x}(t-\tau) \Vert + \tau \Vert\dot{\mathcal{X}}(\xi,\omega) \Vert_{L^\infty(0,t;\mathbb{R}^n)}
		< \delta_4.
	\end{equation*}
	With Taylor there exists some $\theta \in [0,1]$ such that
	\begin{equation*}
		\begin{aligned}
			&\mathcal{V}(t-\tau, \mathcal{X}_t(\xi,\omega)(t-\tau) - \Tilde{x}(t-\tau), \omega) - \mathcal{V}(t-\tau,\xi,\omega)\\
			= \left\langle \vphantom{\int_{t-\tau}^t} \right.&\nabla_\xi \mathcal{V} \left(t-\tau, \xi + \theta \left( \Tilde{x}(t) - \Tilde{x}(t-\tau) - \int_{t-\tau}^t \dot{\mathcal{X}}_t(\xi,\omega)(s) \,\mathrm{d}s \right),\omega \right) ,\\
			& \left( \Tilde{x}(t) - \Tilde{x}(t-\tau) - \int_{t-\tau}^t \dot{\mathcal{X}}_t(\xi,\omega)(s) \,\mathrm{d}s \right) \left. \vphantom{\int_{t-\tau}^t} \right\rangle.
		\end{aligned}
	\end{equation*}
	Then Proposition \ref{Proposition: Bounds for derivatives of the value function} yields
	\begin{equation*}
		\begin{aligned}
			&\left\vert \mathcal{V}(t-\tau, \mathcal{X}_t(\xi,\omega)(t-\tau) - \Tilde{x}(t-\tau), \omega) - \mathcal{V}(t-\tau,\xi,\omega) \right\vert\\
			&\leq \check{M}_1 \left\Vert \Tilde{x}(t) - \Tilde{x}(t-\tau) - \int_{t-\tau}^t \dot{\mathcal{X}}_t(\xi,\omega)(s) \,\mathrm{d}s \right\Vert \\
			&\leq \check{M}_1 \Vert \Tilde{x}(t) - \Tilde{x}(t-\tau) \Vert + \check{M}_1 \tau \Vert \dot{\mathcal{X}}_t(\xi,\omega) \Vert_{L^\infty(0,t;\mathbb{R}^n)}.
		\end{aligned}
	\end{equation*}
	Therefore the first summand in (\ref{LeftLimitContinuity}) tends to zero for $\tau \searrow 0$. The second summand can be treated with the same arguments as the second summand of (\ref{RightLimitContinuity}). This finally shows that for all $t \in (0,T]$ it holds
	\begin{equation*}
		\mathcal{V}(t-\tau,\xi,\omega) \rightarrow \mathcal{V}(t,\xi,\omega),~~~~~\text{for}~~~~~ \tau \searrow 0
	\end{equation*}
	and the assertion is shown.
\end{proof}
Because all constants are independent of time the continuity of the value function carries over to its spatial derivatives.
\begin{proposition}\label{Proposition: Space derivatives continuous in time}
	Let $(\xi,\omega) \in \mathcal{U}_{\tfrac{1}{2}\delta_4}(0) \subset \mathbb{R}^n \times L^2(0,T;\mathbb{R}^r)$ and assume $\omega \in L^\infty(0,T;\mathbb{R}^r)$. Then the mappings $ t \mapsto \nabla_\xi \mathcal{V}(t,\xi,\omega)$, $ t \mapsto \nabla^2_{\xi\xi} \mathcal{V}(t,\xi,\omega)$ and $ t \mapsto \nabla^3_{\xi^3} \mathcal{V}(t,\xi,\omega)$ are continuous in $[0,T]$.
\end{proposition}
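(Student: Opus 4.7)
The plan is to exploit the fact that Proposition~\ref{Proposition: Continuity of the Value Function} already yields continuity in $t$ of $\mathcal{V}(\cdot,\zeta,\omega)$ at every point $\zeta \in \mathcal{U}_{\delta_4/2}(0)$, combined with the time-uniform bounds on the higher spatial derivatives from Proposition~\ref{Proposition: Bounds for derivatives of the value function}. The key observation is that a family of difference quotients of $\mathcal{V}$ in $\xi$ (each of which is continuous in $t$) converges to the corresponding partial derivative uniformly in $t$ on $[0,T]$, and a uniform limit of continuous functions is continuous. The result then follows by iterating this scheme once for $\nabla_\xi \mathcal{V}$, a second time for $\nabla^2_{\xi\xi}\mathcal{V}$, and a third time for $\nabla^3_{\xi^3}\mathcal{V}$.

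Concretely, I would first fix $(\xi,\omega)\in \mathcal{U}_{\delta_4/2}(0)$ with $\omega\in L^\infty(0,T;\mathbb{R}^r)$, and choose $h_0>0$ small enough that $\xi + h e_j \in \mathcal{U}_{\delta_4/2}(0)$ for every $|h|\leq h_0$ and every canonical basis vector $e_j\in \mathbb{R}^n$. For such $h$, Taylor's theorem in $\xi$ gives, for some $\theta \in (0,1)$,
\begin{equation*}
\frac{\mathcal{V}(t,\xi + h e_j,\omega) - \mathcal{V}(t,\xi,\omega)}{h} - \partial_{\xi_j}\mathcal{V}(t,\xi,\omega) = \frac{h}{2}\, \partial^2_{\xi_j \xi_j}\mathcal{V}(t,\xi + \theta h e_j,\omega).
\end{equation*}
By Proposition~\ref{Proposition: Bounds for derivatives of the value function}, the right-hand side is bounded in absolute value by $\tfrac{|h|}{2}\check{M}_2$ uniformly in $t \in [0,T]$. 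The left-hand difference quotient is continuous in $t$ by Proposition~\ref{Proposition: Continuity of the Value Function} applied at the two admissible points $\xi$ and $\xi + h e_j$. Sending $h \to 0$ therefore realises $\partial_{\xi_j}\mathcal{V}(\cdot,\xi,\omega)$ as a uniform limit on $[0,T]$ of continuous functions, hence as a continuous function. Ranging over $j=1,\dots,n$ yields continuity of $t \mapsto \nabla_\xi \mathcal{V}(t,\xi,\omega)$.

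For the Hessian, I would repeat the same argument with $\partial_{\xi_j}\mathcal{V}$ in place of $\mathcal{V}$: the continuity in $t$ just established supplies the base case, and the Taylor remainder is now controlled by $\partial^3_{\xi_j \xi_j \xi_k}\mathcal{V}$, whose modulus is bounded by $\check{M}_3$ uniformly in $t$. One more iteration, using the uniform bound $\check{M}_4$ on the fourth spatial derivative, gives continuity of $t \mapsto \nabla^3_{\xi^3}\mathcal{V}(t,\xi,\omega)$. Throughout, the choice of $h_0$ only needs to be shrunk finitely many times to keep the perturbed arguments inside $\mathcal{U}_{\delta_4/2}(0)$.

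The main obstacle, and indeed the reason the preceding sections worked so hard to keep every constant independent of $t$, is precisely the time-uniformity of the bounds $\check{M}_i$ in Proposition~\ref{Proposition: Bounds for derivatives of the value function}. Without it the Taylor error estimates would carry a $t$-dependent constant that could degenerate as $t \downarrow 0$, in the spirit of the embedding constant blow-up discussed in the remark following Proposition~\ref{Proposition:ErrorSolvability}, and the uniform-limit argument that converts pointwise-in-$t$ continuity into continuity of the limit would break down at the endpoint. Once uniformity is granted, the proof is a clean induction on the order of differentiation.
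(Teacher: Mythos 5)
Your proposal is correct and follows essentially the same route as the paper: both realise each partial derivative as a uniform-in-$t$ limit of difference quotients that are continuous in $t$ by Proposition~\ref{Proposition: Continuity of the Value Function}, with the time-uniform bounds $\check{M}_2,\check{M}_3,\check{M}_4$ from Proposition~\ref{Proposition: Bounds for derivatives of the value function} controlling the Taylor remainder, iterated over the three orders of differentiation. The only cosmetic difference is that you estimate the distance to the limit directly via the second-order Taylor remainder, whereas the paper shows the family $f_h$ is Cauchy in $C([0,T])$ and invokes completeness; the two formulations are equivalent.
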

\begin{proof}
	First, let us show that for any $i \in \{ 1,...,n \}$ the partial derivative $t \mapsto \frac{\partial}{\partial \xi_i} \mathcal{V}(t,\xi,\omega)$ has the claimed regularity. Note that for any $h$ with $|h|$ small enough, we have $ \Vert \xi + h e_i \Vert < \tfrac{1}{2} \delta_4 $. Then Proposition \ref{Proposition: Continuity of the Value Function} shows that for such $h$ it holds $f_h \in C([0,T])$, where
	\begin{equation*}
		f_h(t) \coloneqq \frac{1}{h} \left( \mathcal{V}(t, \xi + h e_i,\omega) - \mathcal{V}(t,\xi,\omega) \right).
	\end{equation*}
	By definition $f_h$ converges pointwise to $\frac{\partial}{\partial \xi_i} \mathcal{V}(\cdot,\xi,\omega)$ for $h \to 0$. Let us show that $f_h$ is a Cauchy sequence in $C([0,T])$. The fact that $C([0,T])$ is a complete space then implies the uniform convergence of $f_h$ and the pointwise limit coincides with the uniform limit, which is an element of $C([0,T])$.
	
	Specifically we need to show that for any $\epsilon > 0$ there exists $h_0 > 0$ such that for all $h_1$, $h_2$ with $\vert h_1 \vert$, $\vert h_2 \vert < h_0$ it follows
	\begin{equation*}
		\Vert f_{h_1} - f_{h_2} \Vert_{C([0,T])} < \epsilon.
	\end{equation*}
	Taylor's Theorem yields existence of $\theta_1$, $\theta_2 \in [0,1]$ such that
	\begin{equation*}
		\begin{aligned}
			&\sup_{s\in[0,T]} \left\vert
			\frac{1}{h_1} \left( \mathcal{V}(s,\xi+h_1 e_i,\omega) - \mathcal{V}(s,\xi,\omega)  \right)
			- \frac{1}{h_2} \left( \mathcal{V}(s,\xi+h_2 e_i,\omega) - \mathcal{V}(s,\xi,\omega)  \right)
			\right\vert \\
			&=\sup_{s\in[0,T]} \left\vert
			\frac{1}{h_1} \nabla_\xi \mathcal{V}(s,\xi + \theta_1 h_1 e_i,\omega)h_1 e_i
			- \frac{1}{h_2} \nabla_\xi \mathcal{V}(s,\xi + \theta_2 h_2 e_i,\omega)h_2 e_i
			\right\vert \\
			&=\sup_{s\in[0,T]} \left\vert
			\frac{\partial}{\partial \xi_i} \mathcal{V}(s,\xi + \theta_1 h_1 e_i,\omega)
			- \frac{\partial}{\partial \xi_i} \mathcal{V}(s,\xi + \theta_2 h_2 e_i,\omega)
			\right\vert \\
		\end{aligned}
	\end{equation*}
	After another application of Taylor's Theorem it follows the existence of $\theta_3 \in [0,1]$ such that
	\begin{equation*}
		\begin{aligned}
			\Vert f_{h_1} - f_{h_2} \Vert_{C([0,T])}
			&= \sup_{s\in[0,T]} \left\vert
			\nabla_\xi \frac{\partial}{\partial \xi_i} \mathcal{V}(s, \xi + \theta_2 h_2 e_i + \theta_3 (\theta_1 h_1 - \theta_2 h_2) e_i ,\omega) (\theta_1 h_1 - \theta_2 h_2) e_i
			\right\vert \\
			&= (\theta_1 h_1 - \theta_2 h_2) \sup_{s\in[0,T]} \left\vert
			\frac{\partial^2}{\partial \xi_i^2} \mathcal{V} (s, \xi + (\theta_2 h_2 + \theta_1 \theta_3 h_1 - \theta_2 \theta_3 h_2) e_i ,\omega) 
			\right\vert
		\end{aligned}
	\end{equation*}
	A sufficiently small upper bound $h_0$ for $\vert h_1 \vert$ and $\vert h_2 \vert$ yields
	\begin{equation*}
		\left \Vert \xi + (\theta_2 h_2 + \theta_1 \theta_3 h_1 - \theta_2 \theta_3 h_2) e_i \right\Vert
		\leq \Vert \xi \Vert + 2 \vert h_2 \vert + \vert h_1 \vert 
		< \tfrac{1}{2} \delta_4.
	\end{equation*}
	With the bound from Proposition \ref{Proposition: Bounds for derivatives of the value function} and a possible further decrease of $h_0$ it follows
	\begin{equation*}
		\Vert f_{h_1} - f_{h_2} \Vert_{C([0,T])}
		\leq \left( \vert h_1 \vert + \vert h_2 \vert \right) \check{M}_2
		< \epsilon.
	\end{equation*}
	This concludes the proof that $f_h$ is a Cauchy sequence in a complete space. Hence the pointwise limit $\mathcal{V}(\cdot,\xi,\omega)$ is also the uniform limit and an element of $C([0,T])$. 
	
	Since Proposition \ref{Proposition: Bounds for derivatives of the value function} includes bounds for the spatial derivatives of $\mathcal{V}$ up to order four, the assertion for derivatives of order two and three can be shown analogously.
\end{proof}
We can finally show that for sufficiently small $(\xi,\omega)$ the value function is differentiable in time and  its derivative is characterized by the HJB equation. For some of the technical proofs, we refer to Appendix \ref{Appendix C}.
\begin{theorem}\label{Theorem: HJB holds}
	Let $(\xi,\omega) \in \mathcal{U}_{\tfrac{1}{2}\delta_4}(0) \subset \mathbb{R}^n \times L^2(0,T;\mathbb{R}^r)$ and assume $\omega$ to be continuous. Then the mapping $ t \mapsto \mathcal{V}(t,\xi,\omega) $ is differentiable in $(0,T]$. Its derivative in $t \in (0,T]$ is given by
	\begin{equation}\label{HJB rigorous}
		\partial_t \mathcal{V}(t,\xi,\omega) 
		= - \left\langle \nabla_\xi \mathcal{V}(t,\xi, \omega) , h(t,\xi) \right\rangle - \frac{1}{2} \left\Vert F^\top \nabla_\xi \mathcal{V}(t,\xi,\omega) \right\Vert^2 + \frac{\alpha}{2} \left\Vert \omega(t) - C \xi \right\Vert^2, 
	\end{equation}
	where $h(t,\xi) = A \xi + G(\Tilde{x}(t) \otimes \xi) + G(\xi \otimes \Tilde{x}(t)) + G(\xi \otimes \xi)$.
\end{theorem}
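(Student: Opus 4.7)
The plan is to establish \eqref{HJB rigorous} by identifying the one-sided derivatives of $t\mapsto\mathcal{V}(t,\xi,\omega)$ via Bellman's principle of optimality, exactly in the same spirit as the continuity argument of Proposition~\ref{Proposition: Continuity of the Value Function}. Specifically, for $t\in[0,T)$ and $\tau>0$ sufficiently small, Bellman's principle yields the decomposition
\begin{equation*}
\mathcal{V}(t+\tau,\xi,\omega)-\mathcal{V}(t,\xi,\omega) = I_1(\tau)+I_2(\tau),
\end{equation*}
where $I_1(\tau) = \mathcal{V}(t,\mathcal{X}_{t+\tau}(\xi,\omega)(t)-\tilde{x}(t),\omega)-\mathcal{V}(t,\xi,\omega)$ and $I_2(\tau)=\tfrac{1}{2}\int_t^{t+\tau}\|\mathcal{U}_{t+\tau}(\xi,\omega)\|^2+\alpha\|\omega-C(\mathcal{X}_{t+\tau}(\xi,\omega)-\tilde{x})\|^2\,\mathrm{d}s$. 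I would divide by $\tau$ and evaluate the two limits separately, then apply the analogous argument to $[t-\tau,t]$ to get the left derivative for $t\in(0,T]$.

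For $I_1$, a Taylor expansion of $\mathcal{V}(t,\cdot,\omega)$ around $\xi$, justified by Corollary~\ref{Corollary: Value function C infty wrt space} and the Hessian bound from Proposition~\ref{Proposition: Bounds for derivatives of the value function}, reduces the task to computing $\lim_{\tau\searrow 0}\frac{1}{\tau}(\mathcal{X}_{t+\tau}(\xi,\omega)(t)-\tilde{x}(t)-\xi)$. Subtracting the ODE for $\tilde{x}$ from the state equation satisfied by $\mathcal{X}_{t+\tau}(\xi,\omega)$ with terminal value $\tilde{x}(t+\tau)+\xi$ and integrating from $t$ to $t+\tau$ gives
\[
\mathcal{X}_{t+\tau}(\xi,\omega)(t)-\tilde{x}(t)-\xi = -\int_t^{t+\tau}\!\bigl[A(\mathcal{X}_{t+\tau}-\tilde{x}) + G(\mathcal{X}_{t+\tau}\otimes\mathcal{X}_{t+\tau}-\tilde{x}\otimes\tilde{x}) + F\mathcal{U}_{t+\tau}\bigr]\,\mathrm{d}s.
\]
Using the uniform $L^\infty$-bounds on $\dot{\mathcal{X}}_{t+\tau}$ and $\mathcal{U}_{t+\tau} = -F^\top\mathcal{P}_{t+\tau}$ (established in the proof of Proposition~\ref{Proposition: Continuity of the Value Function}), the feedback representation $\mathcal{U}_{t+\tau}(\xi,\omega)(t+\tau)=F^\top\nabla_\xi\mathcal{V}(t+\tau,\xi,\omega)$ from Corollary~\ref{Corollary: Feedback Representation Optimal Control}, and the time-continuity of $\nabla_\xi\mathcal{V}$ from Proposition~\ref{Proposition: Space derivatives continuous in time}, this integral divided by $\tau$ tends to $-h(t,\xi)-FF^\top\nabla_\xi\mathcal{V}(t,\xi,\omega)$. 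Thus
\[
\lim_{\tau\searrow 0}\tfrac{1}{\tau}I_1(\tau) = -\langle\nabla_\xi\mathcal{V}(t,\xi,\omega),h(t,\xi)\rangle - \|F^\top\nabla_\xi\mathcal{V}(t,\xi,\omega)\|^2,
\]
while the quadratic Taylor remainder contributes $O(\tau)$ by Proposition~\ref{Proposition: Bounds for derivatives of the value function}.

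For $I_2$, the continuity assumption on $\omega$, together with the uniform convergence $\mathcal{X}_{t+\tau}(\xi,\omega)(s)\to\tilde{x}(t)+\xi$ and $\mathcal{U}_{t+\tau}(\xi,\omega)(s)\to F^\top\nabla_\xi\mathcal{V}(t,\xi,\omega)$ on $[t,t+\tau]$ as $\tau\searrow 0$ (again from the uniform $L^\infty$-bounds and Proposition~\ref{Proposition: Space derivatives continuous in time}), lets the fundamental theorem of calculus give $\tfrac{1}{\tau}I_2(\tau)\to\tfrac{1}{2}\|F^\top\nabla_\xi\mathcal{V}(t,\xi,\omega)\|^2+\tfrac{\alpha}{2}\|\omega(t)-C\xi\|^2$. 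Summing the two limits, one $\|F^\top\nabla_\xi\mathcal{V}\|^2$ cancels half of the other, producing exactly \eqref{HJB rigorous}. Repeating the argument on the interval $[t-\tau,t]$, with $\mathcal{X}_t(\xi,\omega)$ in place of $\mathcal{X}_{t+\tau}(\xi,\omega)$, delivers the matching left derivative; since both one-sided limits agree, $t\mapsto\mathcal{V}(t,\xi,\omega)$ is differentiable on $(0,T]$ and \eqref{HJB rigorous} holds.

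The main obstacle, which is why the continuity assumption on $\omega$ enters and why all constants in the preceding sections had to be kept independent of time, is the control-dependent passage to the limit $\tfrac{1}{\tau}\int_t^{t+\tau}F\mathcal{U}_{t+\tau}(\xi,\omega)\,\mathrm{d}s\to FF^\top\nabla_\xi\mathcal{V}(t,\xi,\omega)$. This requires a uniform-in-$\tau$ Lipschitz bound on $\mathcal{U}_{t+\tau}$ coming from the adjoint equation \eqref{Adjoint1}--\eqref{OptimalityConditionFormula} combined with Propositions~\ref{Proposition: Existence} and \ref{Proposition: FirstOrderOptimalityCondition}, and then the non-trivial time continuity of $\nabla_\xi\mathcal{V}$ from Proposition~\ref{Proposition: Space derivatives continuous in time}, which is precisely where the time-independence of the constants throughout Sections~\ref{Section: The optimal control problem} and \ref{Section: Sensitivity analysis} is essential. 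These technical estimates are the content of Appendix~\ref{Appendix C}.
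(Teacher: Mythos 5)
Your proposal is correct and takes essentially the same route as the paper's proof: Bellman's principle applied to both one-sided difference quotients, a Taylor expansion of $\mathcal{V}(t,\cdot,\omega)$ with the increment $\mathcal{X}_{t+\tau}(\xi,\omega)(t)-\tilde{x}(t)-\xi$ controlled by the averaged convergence results of Appendix \ref{Appendix C} (Lemmas \ref{Lemma: V differentiable Auxiliary1}--\ref{Lemma: V differentiable Auxiliary4}), and the feedback representation of Corollary \ref{Corollary: Feedback Representation Optimal Control} to combine the limits into \eqref{HJB rigorous}. The only cosmetic difference is that you expand the difference of the two ODE integrands directly, whereas the paper passes through $\dot{\tilde{x}}(t)-\dot{\mathcal{X}}_t(\xi,\omega)(t)$ via Lemma \ref{Lemma: V differentiable Auxiliary3}; the underlying computation and the required technical estimates are identical.
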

\begin{proof}
	For sufficiently small $\tau > 0$ it holds
	\begin{equation*}
		\begin{aligned}
			\mathcal{X}_{t+\tau}(\xi,\omega)(t) - \tilde{x}(t)
			&= \mathcal{X}_{t+\tau}(\xi,\omega) (t+\tau) - \int_t^{t+\tau} \dot{\mathcal{X}}_{t+\tau}(\xi,\omega) \,\mathrm{d}s - \tilde{x}(t)\\
			&= \xi + \tilde{x}(t+\tau) - \tilde{x}(t) - \int_t^{t+\tau} \dot{\mathcal{X}}_{t+\tau}(\xi,\omega) \,\mathrm{d}s.
		\end{aligned}
	\end{equation*}
	Hence with Bellman's principle it follows
	\begin{equation*}
		\begin{aligned}
			&\frac{1}{\tau} \left( \mathcal{V}(t+\tau,\xi,\omega) - \mathcal{V}(t,\xi,\omega) \right)\\
			&= \frac{1}{\tau} \mathcal{V}\left(t, \xi + \tilde{x}(t+\tau) - \tilde{x}(t) - \int_t^{t+\tau} \dot{\mathcal{X}}_{t+\tau}(\xi,\omega) \,\mathrm{d}s, \omega \right) - \frac{1}{\tau} \mathcal{V}(t,\xi,\omega)  \\
			&+ \frac{1}{2 \tau} \int_t^{t+\tau} \Vert \mathcal{U}_{t+\tau}(\xi,\omega) \Vert^2 + \alpha \Vert \omega - C(\mathcal{X}_{t+\tau}(\xi,\omega) - \tilde{x}) \Vert^2 \,\mathrm{d}s.
		\end{aligned}
	\end{equation*}
	With Lemma \ref{Lemma: V differentiable Auxiliary4} for $\tau \searrow 0 $ the second term converges to 
	\begin{equation*}
		\frac{1}{2} \Vert \mathcal{U}_t(\xi,\omega)(t) \Vert^2 + \frac{\alpha}{2} \Vert \omega(t) - C(\mathcal{X}_t(\xi,\omega)(t) - \tilde{x}(t))\Vert^2.
	\end{equation*}
	A Taylor expansion shows that there exists some $\theta(\tau) \in [0,1]$ such that the first term is equal to 
	\begin{equation*}
		\begin{aligned}
			&\frac{1}{\tau} \left\langle \vphantom{\int_t^{t+\tau}} \right. \nabla_\xi \mathcal{V}\left(t, \xi + \theta(\tau) \left( \tilde{x}(t+\tau) - \tilde{x}(t) - \int_t^{t+\tau} \dot{\mathcal{X}}_{t+\tau}(\xi,\omega) \,\mathrm{d}s \right), \omega\right) ,\\ 
			&\tilde{x}(t+\tau) - \tilde{x}(t) - \int_t^{t+\tau} \dot{\mathcal{X}}_{t+\tau}(\xi,\omega) \,\mathrm{d}s \left. \vphantom{\int_t^{t+\tau}} \right\rangle\\
			&= \left\langle \vphantom{\int_t^{t+\tau}} \right. \nabla_\xi \mathcal{V}\left(t, \xi + \theta(\tau) \left( \tilde{x}(t+\tau) - \tilde{x}(t) - \int_t^{t+\tau} \dot{\mathcal{X}}_{t+\tau}(\xi,\omega) \,\mathrm{d}s \right), \omega\right) , \\
			&\frac{\tilde{x}(t+\tau) - \tilde{x}(t)}{\tau} - \frac{1}{\tau} \int_t^{t+\tau} \dot{\mathcal{X}}_{t+\tau}(\xi,\omega) \,\mathrm{d}s \left. \vphantom{\int_t^{t+\tau}} \right\rangle.
		\end{aligned}
	\end{equation*}
	Due to Lemma \ref{Lemma: V differentiable Auxiliary3} and the continuity of $\nabla_\xi \mathcal{V}$ with respect to $\xi$ (shown in Corollary \ref{Corollary: Value function C infty wrt space}) the right hand side converges to \begin{equation*}
		\left\langle \nabla_\xi \mathcal{V}(t,\xi,\omega) , \dot{\tilde{x}}(t) - \dot{\mathcal{X}}_t(\xi,\omega)(t) \right\rangle.
	\end{equation*}
	It remains to show convergence of 
	$\frac{1}{\tau} \left(\mathcal{V}(t,\xi,\omega) - \mathcal{V}(t-\tau,\xi,\omega) \right)$.
	First note that it holds
	\begin{equation*}
	\begin{aligned}
		\mathcal{X}_{t}(\xi,\omega)(t-\tau) - \tilde{x}(t-\tau) 
		&= \mathcal{X}_t(\xi,\omega)(t) - \tilde{x}(t-\tau) - \int_{t-\tau}^t \dot{\mathcal{X}}_t(\xi,\omega)(s) \,\mathrm{d}s \\
		&= \xi + \tilde{x}(t) - \tilde{x}(t-\tau) - \int_{t-\tau}^t \dot{\mathcal{X}}_t(\xi,\omega)(s) \,\mathrm{d}s.
	\end{aligned}
	\end{equation*}
	With Bellman's principle it follows
	\begin{equation}\label{Call Once}
		\begin{aligned}
			\frac{1}{\tau} \left(\mathcal{V}(t,\xi,\omega) - \mathcal{V}(t-\tau,\xi,\omega) \right)
			&= \frac{1}{\tau} \left( \mathcal{V}\left(t-\tau,  \mathcal{X}_{t}(\xi,\omega)(t-\tau) - \tilde{x}(t-\tau), \omega \right) - \mathcal{V}(t-\tau,\xi,\omega) \right)\\
			&+ \frac{1}{2 \tau} \int_{t-\tau}^t \Vert \mathcal{U}_t(\xi,\omega) \Vert^2 + \alpha \Vert \omega - C (\mathcal{X}_t(\xi,\omega) - \tilde{x}) \Vert^2 \,\mathrm{d}s.
		\end{aligned}
	\end{equation}
	A second order Taylor expansion shows that the first term of the right hand side is equal to
	\begin{equation*}
		\begin{aligned}
			&= \frac{1}{\tau} \mathcal{V} \left(t-\tau, \xi + \tilde{x}(t) - \tilde{x}(t-\tau) - \int_{t-\tau}^t \dot{\mathcal{X}}_t(\xi,\omega)(s) \,\mathrm{d}s, \omega  \right) - \frac{1}{\tau} \mathcal{V}(t-\tau,\xi,\omega)\\
			&= \frac{1}{\tau} \left\langle \nabla_\xi \mathcal{V}(t-\tau, \xi,\omega) , \tilde{x}(t) - \tilde{x}(t-\tau) - \int_{t-\tau}^t \dot{\mathcal{X}}_t(\xi,\omega)(s) \,\mathrm{d}s \right\rangle\\
			&+ \frac{1}{\tau} \int_0^1 (1-\theta) \left( \tilde{x}(t) - \tilde{x}(t-\tau) - \int_{t-\tau}^t \dot{\mathcal{X}}_t(\xi,\omega)(s) \,\mathrm{d}s \right)^\top\\ 
			&\nabla^2_{\xi\xi} \mathcal{V} \left(t-\tau, \xi + \theta \left( \tilde{x}(t) - \tilde{x}(t-\tau) - \int_{t-\tau}^t \dot{\mathcal{X}}_t(\xi,\omega)(s) \,\mathrm{d}s \right), \omega \right)\\
			&\left( \tilde{x}(t) - \tilde{x}(t-\tau) - \int_{t-\tau}^t \dot{\mathcal{X}}_t(\xi,\omega)(s) \,\mathrm{d}s \right) \,\mathrm{d}\theta.\\
		\end{aligned}
	\end{equation*}
	Since $s \mapsto \nabla_\xi \mathcal{V}(s,\xi,\omega)$ and $s \mapsto \dot{\mathcal{X}}_t(\xi,\omega)(s)$ are continuous in $s = t$, for $\tau \searrow 0$ the first order term converges to 
	\begin{equation*}
		\left\langle \nabla_\xi \mathcal{V}(t,\xi,\omega) , \dot{\tilde{x}}(t) - \dot{\mathcal{X}}_t(\xi,\omega)(t) \right\rangle.
	\end{equation*}
	For sufficiently small $\tau$ Proposition \ref{Proposition: Bounds for derivatives of the value function} yields that the absolute value of the second order term is bounded from above by
	\begin{equation*}
		\tau \int_0^1 \check{M}_2\left\Vert \frac{\tilde{x}(t) - \tilde{x}(t-\tau)}{\tau} - \frac{1}{\tau} \int_{t-\tau}^t \dot{\mathcal{X}}_t(\xi,\omega)(s) \,\mathrm{d}s \right\Vert^2 \,\mathrm{d}\theta.
	\end{equation*}
	Since the term in the integral converges for $\tau \searrow 0$, it is bounded. Therefore the right hand side converges to zero for $\tau \searrow 0$. Since $\mathcal{U}_t(\xi,\omega)(s)$, $\mathcal{X}_t(\xi,\omega)(s)$, $\omega(s)$ and $\tilde{x}(s)$ are continuous in $s = t$, the second term in the right hand side of (\ref{Call Once}) converges to
	\begin{equation*}
		\frac{1}{2} \Vert \mathcal{U}_t(\xi,\omega)(t) \Vert^2 + \frac{\alpha}{2} \Vert \omega(t) - C(\mathcal{X}_t(\xi,\omega)(t) - \tilde{x}(t)) \Vert^2,
	\end{equation*}
	for $\tau \searrow 0$. This finally proves
	\begin{equation*}
		\begin{aligned}
			&\partial_t \mathcal{V}(t,\xi,\omega) 
			= \lim_{\tau \to 0} \frac{\mathcal{V}(t+\tau,\xi,\omega) - \mathcal{V}(t,\xi,\omega)}{\tau} \\
			&= \left\langle \nabla_\xi \mathcal{V}(t,\xi,\omega) , \dot{\tilde{x}}(t) - \dot{\mathcal{X}}_t(\xi,\omega)(t) \right\rangle
			+ \frac{1}{2} \Vert \mathcal{U}_t(\xi,\omega) (t) \Vert^2 + \frac{\alpha}{2} \Vert \omega(t) - C(\mathcal{X}_t(\xi,\omega)(t) - \tilde{x}(t)) \Vert^2\\
			&= - \left\langle \nabla_\xi \mathcal{V}(t,\xi,\omega) , h(t,\xi) \right\rangle - \left\langle \nabla_\xi \mathcal{V}(t,\xi,\omega), F F^\top \nabla_\xi \mathcal{V}(t,\xi,\omega) \right\rangle \\
			&+ \frac{1}{2} \Vert F^\top \nabla_\xi \mathcal{V}(t,\xi,\omega) \Vert^2 + \frac{\alpha}{2} \Vert \omega(t) - C \xi \Vert^2,
		\end{aligned}
	\end{equation*}
	where we used Corollary \ref{Corollary: Feedback Representation Optimal Control}.
	
	This shows the assertion for all $t \in (0,T)$. The result can be extended to $t = T$ by a consideration of this entire work on the time interval $[0,\hat{T}]$ for some $\hat{T} > T$. This argument is repeated implicitly whenever we extend regularity results to the right hand boundary of the time interval. 
\end{proof}
%%%%%%%%%%%%%%%%%%%%%%%%%%%%%%%%%%%%%%%%%%%%%%%%%%%%%%%%%%%%%%%%%%%%%%%%%%%%%
\section{Regularity of the Hessian}\label{Section: Regularity of the Hessian}
%%%%%%%%%%%%%%%%%%%%%%%%%%%%%%%%%%%%%%%%%%%%%%%%%%%%%%%%%%%%%%%%%%%%%%%%%%%%%
In this section it will be shown that for sufficiently small data the Hessian of the value function is an invertible matrix at any time $t$.  We begin with a characterization of the Hessian $\nabla^2_{\xi\xi} \mathcal{V}(t,0,0)$.
\begin{proposition}\label{Proposition: DRE in zero without IV}
	For all $t \in(0,T]$ it holds 
	\begin{equation*}
	\begin{aligned}
		\partial_t \nabla^2_{\xi\xi}\mathcal{V}(t,0,0) 
		&=  \nabla^2_{\xi\xi}\mathcal{V}(t,0,0) B(t) + B^\top(t) \nabla^2_{\xi\xi}\mathcal{V}(t,0,0) \\
		&- \nabla^2_{\xi\xi}\mathcal{V}(t,0,0) FF^\top \nabla^2_{\xi\xi}\mathcal{V}(t,0,0) + \alpha C^\top C,
	\end{aligned}
	\end{equation*}
	where $B(t) \coloneqq - A - G(\Tilde{x}(t) \otimes I_n) - G(I_n \otimes \Tilde{x}(t))$.
\end{proposition}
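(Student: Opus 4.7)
The plan is to start from the pointwise HJB identity of \Cref{Theorem: HJB holds}, differentiate it twice in $\xi$, and evaluate at $(\xi,\omega)=(0,0)$. Two identities at the reference point drive the crucial cancellations. First, $\mathcal{V}(t,0,0)=0$: the pair $(\tilde x,0)$ is admissible for $\xi=0$, $\omega=0$ and attains $J=0$, which is patently the minimum. Second, $\nabla_\xi\mathcal{V}(t,0,0)=0$; indeed, by \Cref{Corollary: Feedback Representation Optimal Control} one has $\nabla_\xi\mathcal{V}(t,0,0)=-p(t)$ with $p$ the adjoint associated to $(\bar x,\bar v,\omega)=(\tilde x,0,0)$, but the adjoint system \eqref{Adjoint1}--\eqref{Adjoint2} then reduces to a homogeneous linear ODE with $p(0)=x_0-\tilde x(0)=0$, forcing $p\equiv 0$.

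With these identities available, the next step is to compute $\nabla^2_{\xi\xi}$ of the right-hand side of \eqref{HJB rigorous}. At $(\xi,\omega)=(0,0)$, using $h(t,0)=0$, $\nabla_\xi\mathcal{V}(t,0,0)=0$, and $Dh(t,0)=A+G(\tilde x(t)\otimes I_n)+G(I_n\otimes \tilde x(t))=-B(t)$, every contribution containing the factor $h$ or the factor $\nabla_\xi\mathcal V$ drops out. In the Hessian of $-\langle\nabla_\xi\mathcal V,h\rangle$ this kills the $\nabla^3\mathcal V\cdot h$ and $D^2h[\nabla_\xi\mathcal V]$ summands, leaving $-(Dh)^\top\nabla^2\mathcal V-\nabla^2\mathcal V\cdot Dh=B^\top\nabla^2\mathcal V+\nabla^2\mathcal V\cdot B$. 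In $\nabla^2_{\xi\xi}\bigl(-\tfrac12\|F^\top\nabla_\xi\mathcal V\|^2\bigr)$ the mixed term carrying $\nabla_\xi\mathcal V$ vanishes and one is left with $-\nabla^2\mathcal V\cdot FF^\top\cdot\nabla^2\mathcal V$. Finally, $\nabla^2_{\xi\xi}\bigl(\tfrac\alpha2\|\omega(t)-C\xi\|^2\bigr)\equiv \alpha C^\top C$. Summing the three contributions reproduces the claimed right-hand side.

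The remaining step is to justify the exchange $\partial_t\nabla^2_{\xi\xi}\mathcal V=\nabla^2_{\xi\xi}\partial_t\mathcal V$ at $(\xi,\omega)=(0,0)$. For this I would pass to an integral form: since $\omega\equiv 0$ is continuous, \Cref{Theorem: HJB holds} yields $\mathcal V(t,\xi,0)-\mathcal V(s,\xi,0)=\int_s^t R(\tau,\xi)\,\mathrm d\tau$ for $\xi$ in a fixed $t$-independent neighborhood of $0$, where $R$ stands for the HJB right-hand side. The time-uniform bounds of \Cref{Proposition: Bounds for derivatives of the value function} on $\nabla_\xi^j\mathcal V$ for $j\leq 4$ produce integrable majorants for $\nabla_\xi^j R(\tau,\cdot)$ up to $j=2$, so one may differentiate twice under the integral sign to obtain
\begin{equation*}
\nabla^2_{\xi\xi}\mathcal V(t,0,0)-\nabla^2_{\xi\xi}\mathcal V(s,0,0)=\int_s^t \nabla^2_{\xi\xi}R(\tau,0)\,\mathrm d\tau.
\end{equation*}
By \Cref{Proposition: Space derivatives continuous in time} the integrand is continuous in $\tau$, so the left-hand side is $C^1$ in $t$ and its derivative coincides with the integrand evaluated at $\tau=t$, which is precisely the Riccati right-hand side.

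The chief obstacle I anticipate is the algebraic bookkeeping in the Hessian of $\langle\nabla_\xi\mathcal V,h\rangle$ with its mix of Kronecker and matrix contractions, and verifying carefully that no would-be third-order spatial derivative of $\mathcal V$ survives the evaluation at the origin. Once this reduction and the interchange above are in place, the identification with the stated DRE is purely algebraic.
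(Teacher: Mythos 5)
Your proposal is correct, and its algebraic core --- differentiating the HJB identity of \Cref{Theorem: HJB holds} twice in $\xi$, evaluating at the origin, and letting the terms carrying $h(t,0)=0$ and $\nabla_\xi\mathcal{V}(t,0,0)=0$ drop out so that only $B^\top\nabla^2\mathcal{V}+\nabla^2\mathcal{V}B-\nabla^2\mathcal{V}FF^\top\nabla^2\mathcal{V}+\alpha C^\top C$ survives --- is exactly the paper's computation. You diverge from the paper in two places, both legitimately. First, you prove $\nabla_\xi\mathcal{V}(t,0,0)=0$ by observing that the adjoint associated with the minimizer $(\tilde{x},0)$ solves a homogeneous linear ODE with zero initial value, hence $p\equiv 0$, and invoking \Cref{Corollary: Feedback Representation Optimal Control}; the paper gets the same conclusion more directly from $\mathcal{V}(t,\cdot,0)\geq 0=\mathcal{V}(t,0,0)$, so that $0$ is a minimizer and the gradient vanishes --- your route works but needs the first-order optimality machinery of \Cref{Proposition: FirstOrderOptimalityCondition} where a one-line convexity-free observation suffices. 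Second, and more substantively, the paper justifies the interchange $\partial_t\nabla^2_{\xi\xi}\mathcal{V}=\nabla^2_{\xi\xi}\partial_t\mathcal{V}$ by citing \Cref{Lemma: Switching Derivatives}(ii), whose proof proceeds through a two-step application of the classical symmetry-of-mixed-partials theorem (Rudin, Thm.~9.41), each step requiring joint continuity of the relevant mixed derivative; you instead pass to the integral form $\nabla^2_{\xi\xi}\mathcal{V}(t,0,0)-\nabla^2_{\xi\xi}\mathcal{V}(s,0,0)=\int_s^t\nabla^2_{\xi\xi}R(\tau,0)\,\mathrm{d}\tau$ via differentiation under the integral sign and then apply the fundamental theorem of calculus. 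This is a valid and arguably more transparent alternative: the dominated-convergence step is covered by the $t$-uniform bounds of \Cref{Proposition: Bounds for derivatives of the value function} (order up to four, so majorants for $\nabla^j_\xi R$ with $j\leq 2$ are available), the continuity of the integrand in $\tau$ is supplied by \Cref{Proposition: Space derivatives continuous in time}, and these are precisely the ingredients the paper's lemma consumes, so no circularity arises. What the paper's lemma buys in exchange is reusability --- it is invoked again in \Cref{lem: xHatC1} for the mixed derivative $\partial_t\nabla_\xi\mathcal{V}$ at general $(\xi,\omega)$, a setting your origin-specific FTC argument would have to be redone for --- but for the present proposition alone your argument is complete.
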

\begin{proof}
	Since $\omega =0$ is continuous, \eqref{HJB rigorous} holds with $(\xi,\omega) = (\xi,0)$ with $\Vert \xi \Vert$ small enough. Then taking partial derivatives with respect to $\xi_i$ and $\xi_j$ and evaluating in $(\xi,\omega) = (0,0)$ yields
	\begin{equation*}
		\begin{aligned}
			\frac{\partial^2}{\partial \xi_j \partial \xi_i} \partial_t \mathcal{V}(t,0,0)
			&= - \left\langle \frac{\partial^2}{\partial \xi_j \partial \xi_i} \nabla_\xi \mathcal{V}(t,0,0) , h(t,0) \right\rangle
			- \left\langle \frac{\partial}{\partial \xi_i} \nabla_\xi \mathcal{V}(t,0,0) , \frac{\partial}{\partial \xi_j} h(t,0) \right\rangle\\
			&- \left\langle \frac{\partial}{\partial \xi_j} \nabla_\xi \mathcal{V}(t,0,0) , \frac{\partial}{\partial \xi_i} h(t,0) \right\rangle
			- \left\langle \nabla_\xi \mathcal{V}(t,0,0) , \frac{\partial^2}{\partial \xi_j \partial \xi_i} h(t,0) \right\rangle\\
			&- \left\langle F^\top \frac{\partial^2}{\partial \xi_j \partial \xi_i} \nabla_\xi \mathcal{V}(t,0,0) , F^\top \nabla_\xi \mathcal{V}(t,0,0) \right\rangle\\
			&- \left\langle F^\top \frac{\partial}{\partial \xi_i} \nabla_\xi \mathcal{V}(t,0,0) , F^\top \frac{\partial}{\partial \xi_j} \nabla_\xi \mathcal{V}(t,0,0) \right\rangle
			+ \alpha \left\langle C^\top C e_i , e_j \right\rangle.
		\end{aligned}
	\end{equation*}
	Note that for any $\eta \in \mathbb{R}^n$ it holds $\mathcal{V}(t,\eta,0) \geq 0$ and $\mathcal{V}(t,0,0) =0$. It follows that $\eta = 0$ is a minimizer of $\mathcal{V}(t,\cdot,0)$ and therefore it holds $\nabla_\xi \mathcal{V}(t,0,0) = 0$ as well as $h(t,0) = 0$. Consequently, we obtain
	\begin{equation*}
		\begin{aligned}
			\frac{\partial^2}{\partial \xi_j \partial \xi_i} \partial_t \mathcal{V}(t,0,0)
			&= - \left\langle \frac{\partial}{\partial \xi_i} \nabla_\xi \mathcal{V}(t,0,0) , \frac{\partial}{\partial \xi_j} h(t,0) \right\rangle
			- \left\langle \frac{\partial}{\partial \xi_j} \nabla_\xi \mathcal{V}(t,0,0) , \frac{\partial}{\partial \xi_i} h(t,0) \right\rangle\\
			&- \left\langle F^\top \frac{\partial}{\partial \xi_i} \nabla_\xi \mathcal{V}(t,0,0) , F^\top \frac{\partial}{\partial \xi_j} \nabla_\xi \mathcal{V}(t,0,0) \right\rangle
			+ \alpha \left\langle C^\top C e_i , e_j \right\rangle.
		\end{aligned}
	\end{equation*}
	With Lemma \ref{Lemma: Switching Derivatives} we can switch the order of differentiation on the left hand side. Writing the resulting $n^2$ equations as a matrix equation yields the assertion.
\end{proof}
Exploiting the fact that the Hessian is continuous in time and the fact that $\nabla^2_{\xi\xi}(0,\xi,\omega) = I_d$ holds for all $(\xi,\omega)$, it is shown that the Hessian in $(0,0)$ is invertible at all times.
\begin{proposition}\label{Proposition: Hessian in zero invertible}
	The Hessian $\nabla^2_{\xi\xi} \mathcal{V}(t,0,0)$ is positive definite for all $t \in [0,T]$.
\end{proposition}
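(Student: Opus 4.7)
The plan is to exploit the Riccati equation from \Cref{Proposition: DRE in zero without IV} together with the initial value $\nabla^2_{\xi\xi}\mathcal{V}(0,0,0) = I_n$, which is immediate from the initialization $\mathcal{V}(0,\xi,\omega) = \tfrac{1}{2}\|\xi\|^2$. Writing $P(t) := \nabla^2_{\xi\xi}\mathcal{V}(t,0,0)$, the key algebraic observation I would make is the identity
\begin{equation*}
    PB + B^\top P - PFF^\top P = PM + M^\top P, \qquad M(t) := B(t) - \tfrac{1}{2}FF^\top P(t),
\end{equation*}
which recasts the Riccati equation into the time-varying Lyapunov form $\dot{P} = PM + M^\top P + \alpha C^\top C$ on $(0,T]$.

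Next, since $P$ is continuous on $[0,T]$ by \Cref{Proposition: Space derivatives continuous in time}, so is $M$, and I would introduce the fundamental matrix $\Psi$ of $\dot{\Psi} = -M\Psi$, $\Psi(0) = I_n$. This $\Psi$ is defined on all of $[0,T]$, and Liouville's formula $\det\Psi(t) = \exp\bigl(-\int_0^t \mathrm{tr}(M(s))\,\mathrm{d}s\bigr)$ guarantees invertibility throughout. Differentiating $\Psi^\top P \Psi$ and substituting the Lyapunov form gives
\begin{equation*}
    \frac{\mathrm{d}}{\mathrm{d}t}\bigl(\Psi(t)^\top P(t)\Psi(t)\bigr) = \alpha\, \Psi(t)^\top C^\top C\, \Psi(t),
\end{equation*}
so integrating from $0$ to $t$ and inserting $P(0) = \Psi(0) = I_n$ yields
\begin{equation*}
    \Psi(t)^\top P(t) \Psi(t) = I_n + \alpha \int_0^t \Psi(s)^\top C^\top C\, \Psi(s)\,\mathrm{d}s \;\geq\; I_n
\end{equation*}
in the L\"owner order. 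Conjugating by $\Psi(t)^{-1}$ then produces $P(t) \geq \Psi(t)^{-\top}\Psi(t)^{-1} > 0$ for every $t \in [0,T]$, which is the claim.

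I do not foresee a substantive obstacle: once the Riccati equation is available, the representation above is essentially a one-line consequence of the Lyapunov rewriting. The only items to verify carefully are that the equation holds on $(0,T]$ (exactly the content of \Cref{Proposition: DRE in zero without IV}) and that the identity extends down to $t=0$ by continuity, so that the initial value $P(0)=I_n$ is correctly inserted. A more naive attempt would consider the first time $t^*$ at which $P$ could lose definiteness and inspect $\tfrac{\mathrm{d}}{\mathrm{d}t}(v^\top P v)\big|_{t^*}$ for $v$ in the kernel; this only forces $Cv = 0$ and is not enough in the absence of an observability hypothesis on $(B,C)$. The Lyapunov rewriting sidesteps this entirely by absorbing the indefinite term $-PFF^\top P$ into a linear drift.
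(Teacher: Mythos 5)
Your proof is correct, but it takes a genuinely different route from the paper's. The paper first exploits the openness of the cone of symmetric positive definite matrices together with the time continuity of $t \mapsto \nabla^2_{\xi\xi}\mathcal{V}(t,0,0)$ from \Cref{Proposition: Space derivatives continuous in time} to locate some $t_0 > 0$ at which the Hessian is already positive definite (being close to $\nabla^2_{\xi\xi}\mathcal{V}(0,0,0) = I_n$), and then views the Hessian as the solution of the differential Riccati equation of \Cref{Proposition: DRE in zero without IV} with positive definite initial value at $t_0$, citing \cite[Proposition 1.1]{DieEi94} for the preservation of positive definiteness. You instead absorb the quadratic term into the linear drift $M = B - \tfrac{1}{2}FF^\top P$ (valid since $P$ is symmetric) and integrate the resulting Lyapunov equation against the fundamental matrix $\Psi$, which amounts to an in-line, self-contained proof of precisely the positivity-preservation fact the paper outsources. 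Your version buys a quantitative lower bound $P(t) \geq \Psi(t)^{-\top}\Psi(t)^{-1}$ in the L\"owner order, uses $P(0) = I_n$ directly instead of through the $\epsilon$-neighborhood argument, and sidesteps the mild awkwardness that propagating positivity forward from $t_0$ only covers $[t_0,T]$, so the paper's argument implicitly relies on $t_0$ being arbitrarily small. The paper's version is shorter given the citation and only ever needs the Riccati equation on $(0,T]$. The two delicate points in your approach are real but you handle them correctly: first, $M$ depends on $P$ itself, yet since $P$ is a known continuous matrix function on $[0,T]$ (again \Cref{Proposition: Space derivatives continuous in time}), $\dot{\Psi} = -M\Psi$ is simply a linear ODE with continuous coefficients, so $\Psi$ exists and is invertible on all of $[0,T]$ by Liouville's formula; second, since the Riccati equation is asserted only on $(0,T]$, the identity $\tfrac{\mathrm{d}}{\mathrm{d}t}\bigl(\Psi^\top P \Psi\bigr) = \alpha\,\Psi^\top C^\top C\,\Psi$ holds on the open interval, while $\Psi^\top P \Psi$ is continuous on $[0,T]$ and the derivative is bounded, so the fundamental theorem of calculus justifies integrating from $0$ and inserting $P(0) = I_n$ — exactly the continuity caveat you flag. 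Your closing observation that the naive first-degeneracy-time argument only yields $Cv = 0$ and would require an observability hypothesis on $(B,C)$ is also accurate, and explains why the Lyapunov rewriting (or the cited result, whose proof is of the same flavor) is the right tool.
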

\begin{proof}
	Since the set of symmetric positive definite matrices is open in the space of symmetric matrices with respect to any matrix norm $\Vert \cdot \Vert$, there exists some $\epsilon > 0$ such that any symmetric matrix $A$ satisfying $\Vert I_n - A \Vert < \epsilon$ is positive definite. Note that Proposition \ref{Proposition: Space derivatives continuous in time} shows that $\nabla^2_{\xi\xi} \mathcal{V}(\cdot,0,0)$ is continuous in $0$. Hence there exists some $t_0 \in (0,T)$ such that 
	\begin{equation*}
		\Vert I_n - \nabla^2_{\xi\xi} \mathcal{V}(t_0,0,0) \Vert
		= \Vert \nabla^2_{\xi\xi} \mathcal{V}(0,0,0) - \nabla^2_{\xi\xi} \mathcal{V}(t_0,0,0) \Vert < \epsilon,
	\end{equation*}
	implying that $\nabla^2_{\xi\xi} \mathcal{V}(t_0,0,0)$ is positive definite. With Proposition \ref{Proposition: DRE in zero without IV} it follows that $\nabla^2_{\xi\xi} \mathcal{V}(t,0,0)$ is the unique solution on $(0,T]$ of the differential Riccati equation
	\begin{equation*}
		\begin{aligned}
			\dot{\Pi}(t) &= \Pi(t) B(t) + B(t)^\top \Pi(t) - \Pi(t)FF^\top \Pi(t) + \alpha C^\top C,\\
			\Pi(t_0) &= \nabla^2_{\xi\xi} \mathcal{V}(t_0,0,0).
		\end{aligned}
	\end{equation*}
	Since $\nabla^2_{\xi\xi} \mathcal{V}(t_0,0,0)$ is positive definite, from \cite[Proposition 1.1]{DieEi94} we obtain that the Hessian $ \nabla^2_{\xi\xi} \mathcal{V}(t,0,0)$ is positive definite for all $t \in (0,T]$. The positive definiteness of the matrix $\nabla^2_{\xi\xi} \mathcal{V}(0,0,0) = I_n$ is clear and the assertion is shown.
\end{proof}
Finally it can be shown that for sufficiently small $(\xi,\omega)$ the Hessian is an invertible matrix.
\begin{theorem}\label{Theorem: Hessian invertible close to zero}
	There exists a constant $\delta_5 \in (0,\tfrac{1}{2} \delta_4 ]$ such that for any $t \in [0,T]$ and $(\xi,\omega) \in \mathcal{U}_{\delta_5}(0)$ the Hessian $\nabla^2_{\xi\xi} \mathcal{V}(t,\xi,\omega)$ is positive definite.
\end{theorem}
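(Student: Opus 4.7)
The plan is to combine the pointwise positive definiteness at $(\xi,\omega)=(0,0)$ from Proposition~\ref{Proposition: Hessian in zero invertible} with a uniform-in-$t$ perturbation bound for the Hessian, obtained from the third-order estimates of Proposition~\ref{Proposition: Bounds for derivatives of the value function}.

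First, I extract a uniform spectral gap at the origin. By Proposition~\ref{Proposition: Space derivatives continuous in time} applied with $\omega \equiv 0 \in L^\infty(0,T;\mathbb{R}^r)$, the matrix-valued map $t \mapsto \nabla^2_{\xi\xi}\mathcal{V}(t,0,0)$ is continuous on the compact interval $[0,T]$; by Proposition~\ref{Proposition: Hessian in zero invertible} every value is symmetric and positive definite. Hence $t \mapsto \lambda_{\min}(\nabla^2_{\xi\xi}\mathcal{V}(t,0,0))$ is continuous and strictly positive on $[0,T]$, so it attains some minimum $\lambda_0 > 0$.

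Next, I quantify the deviation $\nabla^2_{\xi\xi}\mathcal{V}(t,\xi,\omega) - \nabla^2_{\xi\xi}\mathcal{V}(t,0,0)$. For any $(\xi,\omega) \in \mathcal{U}_{\delta_4}(0)$ the line segment $s \mapsto (s\xi, s\omega)$ stays in $\mathcal{U}_{\delta_4}(0)$, so by the smoothness from Corollary~\ref{Corollary: Value function C infty wrt space} the fundamental theorem of calculus applied entrywise yields
\begin{equation*}
\nabla^2_{\xi\xi}\mathcal{V}(t,\xi,\omega) - \nabla^2_{\xi\xi}\mathcal{V}(t,0,0) = \int_0^1 \bigl( M_\xi(s) + M_\omega(s) \bigr)\, \mathrm{d}s,
\end{equation*}
where $M_\xi(s)$ and $M_\omega(s)$ are the symmetric $n \times n$ matrices whose bilinear actions coincide with $D^3_{\xi^3}\mathcal{V}(t,s\xi,s\omega)(\xi,\cdot,\cdot)$ and $D^3_{\omega\xi^2}\mathcal{V}(t,s\xi,s\omega)(\omega,\cdot,\cdot)$, respectively. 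The variational definition of the spectral norm together with the trilinear bounds of Proposition~\ref{Proposition: Bounds for derivatives of the value function} gives $\|M_\xi(s)\|_2 \leq \check{M}_3\|\xi\|$ and $\|M_\omega(s)\|_2 \leq \check{M}_6\|\omega\|_{L^2(0,t;\mathbb{R}^r)}$. Since $\check{M}_3, \check{M}_6$ do not depend on $t$ and $\|\omega\|_{L^2(0,t;\mathbb{R}^r)} \leq \|\omega\|_{L^2(0,T;\mathbb{R}^r)}$, the triangle inequality leads to
\begin{equation*}
\|\nabla^2_{\xi\xi}\mathcal{V}(t,\xi,\omega) - \nabla^2_{\xi\xi}\mathcal{V}(t,0,0)\|_2 \leq (\check{M}_3 + \check{M}_6)\,\max\bigl(\|\xi\|, \|\omega\|_{L^2(0,T;\mathbb{R}^r)}\bigr),
\end{equation*}
uniformly in $t \in (0,T]$.

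Finally, I would choose $\delta_5 \in (0, \tfrac12 \delta_4]$ so small that $(\check{M}_3 + \check{M}_6)\delta_5 < \lambda_0/2$. Then for every $(\xi,\omega) \in \mathcal{U}_{\delta_5}(0)$ and every $t \in (0,T]$, Weyl's inequality applied to the two symmetric Hessians yields
\begin{equation*}
\lambda_{\min}\bigl(\nabla^2_{\xi\xi}\mathcal{V}(t,\xi,\omega)\bigr) \geq \lambda_0 - (\check{M}_3 + \check{M}_6)\delta_5 > \lambda_0/2 > 0,
\end{equation*}
proving positive definiteness; the case $t = 0$ is immediate since $\nabla^2_{\xi\xi}\mathcal{V}(0,\xi,\omega) = I_n$ by the initialization of $\mathcal{V}$. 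The only delicate point is the time-uniformity of the perturbation bound: without the $t$-independent third-derivative constants supplied by Proposition~\ref{Proposition: Bounds for derivatives of the value function}, $\delta_5$ would have to shrink as $t \to 0$ and the conclusion would fail at the left endpoint. Fortunately, that uniformity has already been established as part of the sensitivity analysis carried out in Section~\ref{Section: Sensitivity analysis}.
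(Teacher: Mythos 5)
Your proof is correct, and while it shares the paper's overall architecture --- positive definiteness at the origin from \Cref{Proposition: Hessian in zero invertible}, time-continuity of $\nabla^2_{\xi\xi}\mathcal{V}(\cdot,0,0)$ from \Cref{Proposition: Space derivatives continuous in time}, compactness of $[0,T]$, and a $t$-uniform perturbation bound of the form $(\check{M}_3+\check{M}_6)\delta_5$ built from \Cref{Proposition: Bounds for derivatives of the value function} --- it diverges from the paper in both end steps, in each case to good effect. At the origin, the paper proves continuity of $s\mapsto\nabla^2_{\xi\xi}\mathcal{V}(s,0,0)^{-1}$ via the determinant and Cramer's rule and extracts the uniform quantity $\tfrac12\min_{s\in[0,T]}\Vert\nabla^2_{\xi\xi}\mathcal{V}(s,0,0)^{-1}\Vert_{n,n}^{-1}$, whereas you work directly with the uniform spectral gap $\lambda_0=\min_{t}\lambda_{\min}\bigl(\nabla^2_{\xi\xi}\mathcal{V}(t,0,0)\bigr)>0$; these are equivalent, but yours avoids any discussion of inverses. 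The substantive difference is in the conclusion: the paper's smallness condition only yields \emph{invertibility} of $\nabla^2_{\xi\xi}\mathcal{V}(t,\xi,\omega)$ (via a Neumann-series perturbation lemma), so positive definiteness requires a second, separate argument --- a homotopy along $s\mapsto(s\xi,s\omega)$ showing that a negative eigenvalue at $s=1$ would force a zero eigenvalue at some $s\in(0,1)$, contradicting invertibility on the ball. Your appeal to Weyl's inequality $\lambda_{\min}(A+E)\geq\lambda_{\min}(A)-\Vert E\Vert_2$ for symmetric matrices collapses these two stages into one and yields the quantitative gap $\lambda_{\min}\geq\lambda_0/2$, which as a bonus immediately re-derives the uniform inverse bound $M_\mathcal{V}$ of \Cref{lem: EstHesInv} as $2/\lambda_0$, so nothing used downstream is lost. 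Finally, your deviation estimate via the fundamental theorem of calculus along the diagonal segment $(s\xi,s\omega)$, as opposed to the paper's Taylor splitting through the intermediate point $(0,\omega)$, is an inessential variation: both rest on the joint $C^\infty$ smoothness of \Cref{Corollary: Value function C infty wrt space}, both are legitimate because $\mathcal{U}_{\delta_4}(0)$ is convex, and both produce the same constant $\check{M}_3+\check{M}_6$.
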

\begin{proof}
	The goal of this proof is to show that one can choose a time-independent $\delta_5$ such that for all $t \in [0,T]$ and $(\xi,\omega) \in \mathcal{U}_{\delta_5}(0)$ it holds
	\begin{equation*}
		\Vert \nabla^2_{\xi\xi}\mathcal{V}(t,\xi,\omega) - \nabla^2_{\xi\xi}\mathcal{V}(t,0,0) \Vert_{n,n}
		\leq \frac{1}{2} \Vert \nabla^2_{\xi\xi}\mathcal{V}(t,0,0)^{-1} \Vert_{n,n}^{-1}.
	\end{equation*}
	Then \cite[Proposition 7 in Section1.23]{Zei95AMS108} yields that $ \nabla^2_{\xi\xi}\mathcal{V}(t,\xi,\omega) $ is an invertible matrix.
	
	As a first step we will show continuity of the mapping $s \mapsto \nabla^2_{\xi\xi}\mathcal{V}(s,0,0)^{-1}$, which then implies existence of a minimum over $[0,T]$.
	First note that Proposition \ref{Proposition: Space derivatives continuous in time} ensures continuity of $s \mapsto \nabla^2_{\xi\xi} \mathcal{V}(s,0,0)$ in $[0,T]$. Since the mapping $A \mapsto \det(A)$ is a continuous mapping, we conclude that there exists $s^* \in [0,T]$ such that for all $t \in [0,T]$ it holds
	\begin{equation*}
		\det\left(\nabla_{\xi\xi}^2 \mathcal{V}(t,0,0)\right) \geq \min\limits_{s \in [0,T]} \det\left(\nabla_{\xi\xi}^2 \mathcal{V}(s,0,0)\right) = \det\left(\nabla_{\xi\xi}^2 \mathcal{V}(s^*,0,0)\right) > 0,
	\end{equation*}
	where the last inequality follows with the invertibility of $\nabla_{\xi\xi}^2 \mathcal{V}(s^*,0,0)$ shown in Proposition \ref{Proposition: Hessian in zero invertible}. Finally with Cramer's rule it follows that $s \mapsto \nabla^2_{\xi\xi} \mathcal{V}(s,0,0)^{-1}$ is continuous in $[0,T]$.
	
	For now set $\delta_5 = \tfrac{1}{2} \delta_4$. Let $t \in (0,T]$ be fixed and let $(\xi,\omega) \in \mathcal{U}_{\delta_5}(0)$. Let $\Vert \cdot \Vert_{n,n}$ denote the maximum norm on $\mathbb{R}^{n,n}$. Further let $i,j \in \{ 1,...,n \}$ be such that
	\begin{equation*}
		\max_{\substack{k,l \in\\ \{ 1,...,n \}}} \left\vert \frac{\partial^2}{\partial \xi_k \partial \xi_l} \mathcal{V}(t,\xi,\omega) - \frac{\partial^2}{\partial \xi_k \partial \xi_l}\mathcal{V}(t,0,0) \right\vert
		= \left\vert \frac{\partial^2}{\partial \xi_i \partial \xi_j} \mathcal{V}(t,\xi,\omega) - \frac{\partial^2}{\partial \xi_i \partial \xi_j}\mathcal{V}(t,0,0) \right\vert.
	\end{equation*}
	Taylor's theorem implies the existence of $\theta_\xi(t) \in [0,1]$ and $\theta_\omega(t) \in [0,1]$ such that
	\begin{equation*}
		\begin{aligned}
			&\Vert \nabla^2_{\xi\xi}\mathcal{V}(t,\xi,\omega) - \nabla^2_{\xi\xi}\mathcal{V}(t,0,0) \Vert_{n,n}
			= \left\vert \frac{\partial^2}{\partial \xi_i \partial \xi_j} \mathcal{V}(t,\xi,\omega) - \frac{\partial^2}{\partial \xi_i \partial \xi_j}\mathcal{V}(t,0,0) \right\vert\\
			&\leq \left\vert \frac{\partial^2}{\partial \xi_i \partial \xi_j} \mathcal{V}(t,\xi,\omega) - \frac{\partial^2}{\partial \xi_i \partial \xi_j}\mathcal{V}(t,0,\omega) \right\vert
			+ \left\vert \frac{\partial^2}{\partial \xi_i \partial \xi_j} \mathcal{V}(t,0,\omega) - \frac{\partial^2}{\partial \xi_i \partial \xi_j}\mathcal{V}(t,0,0) \right\vert\\
			&= \left\vert D_\xi \frac{\partial^2}{\partial \xi_i \partial \xi_j} \mathcal{V}(t, \theta_\xi(t) \xi, \omega) \xi \right\vert
			+ \left\vert D_\omega \frac{\partial^2}{\partial \xi_i \partial \xi_j} \mathcal{V}(t,0, \theta_\omega(t)\omega) \omega \right\vert\\
			&= \left\vert D^3_{\xi^3}\mathcal{V}(t,\theta_\xi(t)\xi,\omega)(\xi,e_i,e_j) \right\vert
			+ \left\vert D^3_{\omega \xi^2} \mathcal{V}(t,0,\theta_\omega(t) \omega)(\omega,e_i,e_j) \right\vert\\
			&\leq \check{M}_3 \Vert \xi \Vert + \check{M}_6 \Vert \omega \Vert_{L^2(0,t;\mathbb{R}^r)}
			\leq\left( \check{M}_3 + \check{M}_6 \right) \delta_5.
		\end{aligned}
	\end{equation*}
	Hence an appropriate decrease of $\delta_5$ ensures
	\begin{equation*}
		\Vert \nabla^2_{\xi\xi}\mathcal{V}(t,\xi,\omega) - \nabla^2_{\xi\xi}\mathcal{V}(t,0,0) \Vert_{n,n}
		\leq \frac{1}{2} \min\limits_{s \in [0,T]} \Vert \nabla^2_{\xi\xi}\mathcal{V}(s,0,0)^{-1} \Vert_{n,n}^{-1}
		\leq \frac{1}{2} \Vert \nabla^2_{\xi\xi}\mathcal{V}(t,0,0)^{-1} \Vert_{n,n}^{-1}.
	\end{equation*}
	Note that the minimum is a constant independent of $t$. Therefore $\delta_5$ can be chosen independent of $t$ as well. It is proven that $ \nabla^2_{\xi\xi}\mathcal{V}(t,\xi,\omega) $ is an invertible matrix. 
	
	To conclude the proof it remains to show that it is positive definite. Assuming the contrary implies that there exists a non-positive eigenvalue. Since the matrix is invertible it must be non-zero and therefore negative. Consider the function $f \colon [0,1] \rightarrow \mathbb{R}$, where $f(1)$ is that negative eigenvalue and $f(s)$ is the corresponding eigenvalue of $\nabla_{\xi\xi}^2 \mathcal{V}(t,s\xi,s\omega)$. Since eigenvalues depend continuously on the matrix entries, $f$ is a continuous function. As shown earlier $\nabla_{\xi\xi}^2 \mathcal{V}(t,0,0)$ is a symmetric positive definite matrix implying $f(0) > 0$. By assumption it holds $f(1) < 0$. With the continuity of $f$ it follows the existence of $s \in (0,1)$ such that $f(s) = 0$. Then $\nabla_{\xi\xi}^2\mathcal{V}(t,s\xi,s\omega)$ has a zero eigenvalue and is therefore not invertible. This leads to a contradiction because $ s(\xi,\omega) \in \mathcal{U}_{\delta_5}(0)$. 
\end{proof}
\begin{remark}~
	\begin{itemize}
		\item[(i)] Note that it is essential to enforce $\delta_5 < \delta_4$. Therefore a time-dependence of $\delta_4$ would have carried over to $\delta_5$.
		\item[(ii)] To obtain the positive definiteness of $\nabla_{\xi\xi}^2 \mathcal{V}(t,0,0)$ in Proposition \ref{Proposition: Hessian in zero invertible} we heavily depend on the fact that $\omega = 0 \in L^2(0,T;\mathbb{R}^r)$ is continuous and essentially bounded. Only due to this regularity Theorem \ref{Theorem: HJB holds} ensures that the HJB equation holds and eventually leads to Proposition \ref{Proposition: DRE in zero without IV}. However, in Theorem \ref{Theorem: Hessian invertible close to zero} we only assume, that $\omega \in L^2(0,T;\mathbb{R}^r)$ has a sufficiently small norm. The assumption of continuity is no longer required. 
	\end{itemize}
\end{remark}
%---------------------------------------------------------------
%---------------------------------------------------------------
%---------------------------------------------------------------
\section{The Mortensen observer}\label{sec: mortensen}
%---------------------------------------------------------------
%---------------------------------------------------------------
%---------------------------------------------------------------
This section contains the main result of this work. We will show that for sufficiently small $\omega$ and fixed arbitrary $t$ the value function admits a unique minimizer in $\xi$. The resulting trajectory will be characterized as a solution of the observer equation.
\begin{lemma}\label{lem: VFminimizer}
	There exist constants $\delta_6 \in (0,\delta_5]$ and $\hat{c}_1 > 0 $ such that for every $t \in [0,T]$ and $\omega \in L^2(0,t;\mathbb{R}^r)$ with $\Vert \omega \Vert_{L^2(0,t;\mathbb{R}^r)} < \delta_6$ the following holds: The mapping $\xi \mapsto \mathcal{V}(t,\cdot,\omega)$ admits at most one global minimizer and it holds
	\begin{equation*}
		\xi \in \mathbb{R}^n \text{ is a minimizer of } \mathcal{V}(t,\cdot,\omega)
		\iff \nabla_\xi \mathcal{V}(t,\xi,\omega) = 0 \text{ and } \Vert \xi \Vert < \hat{c}_1 \delta_6.
	\end{equation*}
\end{lemma}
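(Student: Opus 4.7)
The plan is to combine the local strict convexity of $\mathcal{V}(t,\cdot,\omega)$ that follows from Theorem \ref{Theorem: Hessian invertible close to zero} with an a priori bound forcing every global minimizer into a small ball around the origin, where all the smoothness already established applies. The critical ingredient is to pass from smallness of the cost to smallness of $\xi$ with a time-uniform constant.

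As a first step I would test the cost functional with the admissible pair $(\tilde{x},0)$, which realizes $\xi=0$ and yields $\mathcal{V}(t,0,\omega) \leq \tfrac{\alpha}{2}\|\omega\|_{L^2(0,t;\mathbb{R}^r)}^2 \leq \tfrac{\alpha}{2}\delta_6^2$. Then, for any $\xi\in\mathbb{R}^n$ satisfying $\mathcal{V}(t,\xi,\omega) \leq \mathcal{V}(t,0,\omega)$, I would pick an infimizing sequence $(x_k,v_k)$ admissible for $e(\cdot,\cdot;t,\xi)=0$ with $J(x_k,v_k;t,\omega) \leq \mathcal{V}(t,\xi,\omega)+\epsilon_k$, $\epsilon_k\searrow 0$. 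Nonnegativity of the cost terms gives $\|x_k(0)-x_0\|^2 + \|v_k\|_{L^2(0,t;\mathbb{R}^m)}^2 \leq \alpha\delta_6^2 + 2\epsilon_k$. Setting $e_k := x_k - \tilde{x}$, the error $e_k$ solves the forward-in-time nonlinear IVP with initial datum $e_k(0) = x_k(0)-x_0$, linear part $A + G(\tilde{x}\otimes I_n)+G(I_n\otimes\tilde{x})$, quadratic term $G(e_k\otimes e_k)$ and forcing $Fv_k$. A fixed-point argument paralleling that of Proposition \ref{Proposition:ErrorSolvability} (applied forward in time) yields uniqueness in $V_t$ together with the time-uniform bound $\|e_k\|_{V_t} \leq c\bigl(\|e_k(0)\| + \|v_k\|_{L^2(0,t;\mathbb{R}^m)}\bigr)$. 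Since $\xi = e_k(t)$, sending $\epsilon_k\to 0$ delivers $\|\xi\| \leq \hat{c}_1\delta_6$ for a $t$-independent $\hat{c}_1$; after a further shrinkage of $\delta_6$ we ensure $\hat{c}_1\delta_6 < \delta_5$.

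With the sublevel set $\{\mathcal{V}(t,\cdot,\omega) \leq \mathcal{V}(t,0,\omega)\}$ contained in the closed ball $\overline{\mathcal{U}_{\hat{c}_1\delta_6}(0)} \subset \mathcal{U}_{\delta_5}(0)$, on which $\mathcal{V}(t,\cdot,\omega)$ is $C^\infty$ with positive definite Hessian and hence strictly convex, the remaining conclusions fall out quickly. Continuity on the compact ball $\overline{\mathcal{U}_{\hat{c}_1\delta_6}(0)}$ produces a minimizer there, and the sublevel inclusion promotes it to a global minimizer on $\mathbb{R}^n$; strict convexity delivers uniqueness. For the forward direction of the equivalence, the a priori bound places any global minimizer $\xi^*$ in the smoothness region, and first-order optimality yields $\nabla_\xi \mathcal{V}(t,\xi^*,\omega)=0$. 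For the backward direction, any critical point $\xi$ with $\|\xi\|<\hat{c}_1\delta_6$ lies in the strictly convex region $\mathcal{U}_{\delta_5}(0)$ and is therefore its unique minimizer on this convex neighborhood; together with the sublevel containment this upgrades $\xi$ to the global minimizer on all of $\mathbb{R}^n$.

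The main obstacle is the a priori bound on $\|\xi\|$: one must convert smallness of $J$ — which directly controls only $\|x_k(0)-x_0\|$ and $\|v_k\|_{L^2}$ — into smallness of the terminal perturbation $\xi=e_k(t)$, with a constant $\hat{c}_1$ independent of $t\in[0,T]$. The $V_t$-estimate for the nonlinear forward IVP provides this link, and its time uniformity crucially depends on using the $V_t$-norm (combining $H^1$ and $L^\infty$ bounds) rather than the $H^1$-norm alone, whose embedding constant into $L^\infty$ degenerates as $t\searrow 0$, in line with the remark following Proposition \ref{Proposition:ErrorSolvability}.
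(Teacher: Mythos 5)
Your proof is correct and follows essentially the same route as the paper's: testing with the admissible pair $(\tilde{x},0)$, converting smallness of the cost into a time-uniform a priori bound $\Vert \xi \Vert < \hat{c}_1 \delta_6$ on (near-)minimizers via the forward-in-time $V_t$-estimate behind Proposition~\ref{Proposition:ErrorSolvability}, and then invoking strict convexity on the resulting small ball from Theorem~\ref{Theorem: Hessian invertible close to zero}, with the $t=0$ case trivial since $\mathcal{V}(0,\xi,\omega)=\tfrac{1}{2}\Vert\xi\Vert^2$. Your additional Weierstrass argument establishing \emph{existence} of the global minimizer is sound but goes beyond what the lemma asserts (it only claims "at most one"); the paper deliberately defers existence to Proposition~\ref{prop: WDargmin}, where the implicit function theorem of \cite{Hol70} is used instead because the continuous dependence of $\xi^*$ on $\omega$ is also needed for the later density argument.
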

\begin{proof}
	For now set $\delta_6 = \delta_5$ and let $t \in (0,T]$ be arbitrary. First note that 
	\begin{equation*}
		\inf_\xi \mathcal{V}(t,\xi,\omega) 
		\leq \mathcal{V}(t,0,\omega)
		= \min_{\substack{x,v \text{ st. } \\ e(x,v;t,0)=0}} J(x,v;t,\omega)
		\leq J(\Tilde{x},0;t,\omega)
		\leq \tfrac{\alpha}{2} \Vert \omega \Vert_{L^2(0,t;\mathbb{R}^r)}^2
		< \tfrac{\alpha}{2} \delta_6^2.
	\end{equation*}
	Assume $\xi^* \in \mathbb{R}^n$ to be a minimizer. It follows 
	\begin{equation*}
		\inf_{\substack{x,v \text{ st. } \\ e(x,v;t,\xi^*)=0}} J(x,v;t,\omega)
		= \mathcal{V}(t,\xi^*,\omega) < \tfrac{\alpha}{2}\delta_6^2.
	\end{equation*}
	Hence there exist $x \in V_t$ and $v \in L^2(0,t;\mathbb{R}^m)$ such that 
	$e(x,v;t,\xi^*) = 0$ and $J(x,v;t,\omega) < \tfrac{\alpha}{2} \delta_6^2$. Then $\tfrac{1}{2} \Vert x(0) - \Tilde{x}(0) \Vert^2 < \tfrac{\alpha}{2} \delta_6^2$ and $\tfrac{1}{2} \Vert v \Vert_{L^2(0,t;\mathbb{R}^m)}^2 < \tfrac{\alpha}{2} \delta_6^2$. With the argument used in the proof of Proposition \ref{Proposition:ErrorSolvability} and a possible decrease of $\delta_6$ it can be shown that there exists $c_1 > 0$ independent of $t$, $x$, $v$, $\omega$ and $\xi^*$ such that
	\begin{equation*}
		\Vert x - \Tilde{x} \Vert_{L^\infty(0,t;\mathbb{R}^n)}
		\leq c_1 \left( \Vert x(0) - \Tilde{x}(0) \Vert + \Vert v \Vert_{L^2(0,t;\mathbb{R}^m)} \right).
	\end{equation*}
	It follows
	\begin{equation*}
		\begin{aligned}
			\tfrac{1}{2} \Vert \xi^* \Vert^2
			&\leq \tfrac{1}{2} \Vert x(t) - \Tilde{x}(t) \Vert^2
			\leq \tfrac{1}{2} \Vert x - \Tilde{x} \Vert_{L^\infty(0,t;\mathbb{R}^n)}^2
			\leq c_1^2 \left( \Vert x(0) - \Tilde{x}(0) \Vert^2 + \Vert v \Vert_{L^2(0,t;\mathbb{R}^m)}^2 \right)\\
			&\leq 2c_1^2 J(x,v;t,\omega)
			< c_1^2 \alpha \delta_6^2.
		\end{aligned}
	\end{equation*}    
	Therefore the search for a minimizer can be restricted to the open convex set \begin{equation*}
		\mathcal{K} \coloneqq \left\{ \xi \in \mathbb{R}^n \colon \Vert \xi \Vert < \sqrt{2\alpha}c_1\delta_6 \right\}.
	\end{equation*}
	A possible decrease of $\delta_6$ ensures $\mathcal{K} \subset \mathcal{U}_{\delta_5}(0)$. According to Theorem \ref{Theorem: Hessian invertible close to zero} the Hessian $\nabla_{\xi\xi}^2\mathcal{V}(t,\xi,\omega)$ is positive definite for all $\xi \in \mathcal{K}$, implying that $\mathcal{V}(t,\cdot,\omega)$ is strictly convex on $\mathcal{K}$. The assertion is shown for $t \in (0,T]$. Noting that $\mathcal{V}(0,\xi,\omega) =\tfrac{1}{2} \Vert \xi \Vert^2$ concludes the proof.
\end{proof}
For any fixed $t\in(0,T]$ an application of the implicit function theorem to $\nabla_\xi\mathcal{V}(t,\cdot,\cdot)$ shows existence of a unique minimizer.
\begin{proposition}\label{prop: WDargmin}
	There exists a constant $\delta_7 \in (0,\delta_6]$ such that for every $t \in [0,T]$ and $\omega \in L^2(0,t;\mathbb{R}^m)$ with $\Vert \omega \Vert_{L^2(0,t;\mathbb{R}^m)} < \delta_7 $ the mapping $\xi \mapsto \mathcal{V}(t,\xi,\omega)$ admits exactly one minimizer $\xi^* \in \mathbb{R}^n$. Further $\xi^*$ depends continuously on $\omega$ and there exists a constant $\hat{c}_2 > 0$ such that it holds $\Vert \xi^* \Vert \leq \hat{c}_2 \delta_7$.
\end{proposition}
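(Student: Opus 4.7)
The plan is to apply a quantitative implicit function theorem (in the style of \cite{Hol70}, as already used in Lemma~\ref{Lemma: Four Cinfty Mappings}) to the mapping $F_t(\xi,\omega) := \nabla_\xi \mathcal{V}(t,\xi,\omega)$ at the base point $(\xi,\omega)=(0,0)$, taking care to choose all constants independently of $t\in(0,T]$. Three ingredients will need to be verified uniformly in $t$: first, $F_t(0,0)=0$, which follows because the admissible pair $(\tilde{x},0)$ yields $J(\tilde{x},0;t,0)=0$, so $\xi=0$ is a global minimizer of the nonnegative functional $\mathcal{V}(t,\cdot,0)$ and the gradient vanishes there; second, $D_\xi F_t(0,0)=\nabla^2_{\xi\xi}\mathcal{V}(t,0,0)$ is positive definite by Proposition~\ref{Proposition: Hessian in zero invertible}, and the continuity of $t \mapsto \nabla^2_{\xi\xi}\mathcal{V}(t,0,0)^{-1}$ on $[0,T]$ (established via Cramer's rule inside the proof of Theorem~\ref{Theorem: Hessian invertible close to zero}) yields a $t$-independent upper bound on the inverse; third, the Lipschitz continuity of $(\xi,\omega)\mapsto\nabla^2_{\xi\xi}\mathcal{V}(t,\xi,\omega)$ on $\mathcal{U}_{\delta_5}(0)\times\mathcal{U}_{\delta_5}(0)$ will follow, with a $t$-independent Lipschitz constant, from the uniform bounds on $D^3_{\xi^3}\mathcal{V}$ and $D^3_{\omega\xi^2}\mathcal{V}$ given in Proposition~\ref{Proposition: Bounds for derivatives of the value function}.

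Once these three items are in hand the quantitative implicit function theorem will produce $t$-independent radii $\rho_\omega,\rho_\xi>0$ and a continuous map $\omega\mapsto\xi^*(\omega)$ from $\mathcal{U}_{\rho_\omega}(0)\subset L^2(0,t;\mathbb{R}^r)$ into $\mathcal{U}_{\rho_\xi}(0)\subset\mathbb{R}^n$ uniquely solving $\nabla_\xi\mathcal{V}(t,\xi^*(\omega),\omega)=0$. To control the norm of $\xi^*(\omega)$, I would combine the mean value theorem with the identity $\nabla_\xi\mathcal{V}(t,0,0)=0$ to write
\[
0 \;=\; \Bigl(\int_0^1 \nabla^2_{\xi\xi}\mathcal{V}(t,s\xi^*(\omega),\omega)\,\mathrm{d}s\Bigr)\xi^*(\omega) \;+\; \int_0^1 D_\omega\nabla_\xi\mathcal{V}(t,0,s\omega)\omega\,\mathrm{d}s,
\]
and invoke the uniform invertibility of the integrated Hessian (by the perturbation argument of Theorem~\ref{Theorem: Hessian invertible close to zero}, after possibly shrinking $\rho_\xi$ and $\rho_\omega$) together with the mixed-derivative bound $\check{M}_5$ from Proposition~\ref{Proposition: Bounds for derivatives of the value function} to derive $\|\xi^*(\omega)\|\le \hat{c}_2\|\omega\|_{L^2(0,t;\mathbb{R}^r)}$ for some $\hat{c}_2>0$ independent of $t$.

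It will then remain to select $\delta_7\in(0,\delta_6]$ small enough that both $\delta_7\le\rho_\omega$ and $\hat{c}_2\delta_7<\hat{c}_1\delta_6$ hold. For $\|\omega\|_{L^2(0,t;\mathbb{R}^r)}<\delta_7$ the point $\xi^*(\omega)$ will satisfy the characterization $\nabla_\xi\mathcal{V}(t,\xi^*(\omega),\omega)=0$ together with $\|\xi^*(\omega)\|<\hat{c}_1\delta_6$, so Lemma~\ref{lem: VFminimizer} will identify it as the unique global minimizer of $\mathcal{V}(t,\cdot,\omega)$. The case $t=0$ is immediate from $\mathcal{V}(0,\xi,\omega)=\tfrac{1}{2}\|\xi\|^2$, whose unique minimizer is $\xi^*=0$. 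The main obstacle, as throughout the paper, is keeping every constant uniformly valid in $t\in[0,T]$; here this hinges on the uniform-in-$t$ bound for $\|\nabla^2_{\xi\xi}\mathcal{V}(t,0,0)^{-1}\|$ (secured via the continuity argument of Theorem~\ref{Theorem: Hessian invertible close to zero}) and the uniform third-derivative bounds from Proposition~\ref{Proposition: Bounds for derivatives of the value function}, both of which are already in place.
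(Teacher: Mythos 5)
Your proposal is correct and follows essentially the same route as the paper: both apply the quantitative implicit function theorem of \cite{Hol70} to $(\xi,\omega)\mapsto\nabla_\xi\mathcal{V}(t,\xi,\omega)$ at $(0,0)$, using the $t$-uniform inverse bound $M_\mathcal{V}$ on $\nabla^2_{\xi\xi}\mathcal{V}(t,0,0)^{-1}$ (Lemma~\ref{lem: EstHesInv}), the third-derivative bounds $\check{M}_3,\check{M}_6$ for the Hessian perturbation, the mixed bound $\check{M}_5$ for $\nabla_\xi\mathcal{V}(t,0,\omega)$, and then Lemma~\ref{lem: VFminimizer} to identify the zero of the gradient as the unique global minimizer. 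Your only deviation is deriving the bound $\|\xi^*(\omega)\|\le\hat{c}_2\|\omega\|$ by a separate mean-value identity, whereas the paper reads the same linear-in-$\delta_7$ bound directly off the theorem's conclusion that the solution map takes values in $\overline{\mathcal{U}_{\epsilon(\delta_7)}(0)}$ with $\epsilon(\delta_7)=2c_2 M_\mathcal{V}\check{M}_5\delta_7$; this is an equally valid, interchangeable step.
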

\begin{proof}
	Due to Lemma \ref{lem: VFminimizer} we only have to show existence of $\delta_7 \in (0,\delta_6]$ such that for any $t \in (0,T]$ and $\omega \in L^2(0,t;\mathbb{R}^r)$ satisfying $\Vert \omega \Vert_{L^2(0,t;\mathbb{R}^r)} < \delta_7$ there exists $\xi^* \in \mathcal{U}_{\hat{c}\delta_6}(0)$ with $\nabla_\xi \mathcal{V}(t,\xi^*,\omega) = 0$. For a fixed $t \in (0,T]$ we will apply \cite[Theorem]{Hol70} to the mapping $(\xi,\omega) \mapsto P(\xi,\omega) \coloneqq \nabla_\xi \mathcal{V}(t,\xi,\omega)$, which is a continuous map on $\Omega = \mathcal{U}_{\delta_5}(0) \subset \mathbb{R}^n \times L^2(0,t;\mathbb{R}^r)$.
	
	First note that $D_{\xi} P = \nabla_{\xi\xi}^2\mathcal{V}(t,\cdot,\cdot)$ exists and is continuous in $\Omega$. According to Theorem \ref{Theorem: Hessian invertible close to zero} the derivative $D_\xi P(0,0) = \nabla_{\xi\xi}^2\mathcal{V}(t,0,0)$ is invertible and with Lemma \ref{lem: EstHesInv} it holds $\Vert \nabla_{\xi\xi}^2\mathcal{V}(t,0,0)^{-1} \Vert_2 < M_\mathcal{V}$. Since all matrix norms are equivalent, there exists a constant $c_1>0$ such that $\Vert A \Vert_2 \leq c_1\Vert A \Vert_{n,n}$ for all $A \in \mathbb{R}^{n,n}$. Hence the arguments made in the proof of Theorem \ref{Theorem: Hessian invertible close to zero} yield that for all $(\xi,\omega) \in \Omega$ it holds
	\begin{equation*}
		\Vert \nabla_{\xi\xi}^2\mathcal{V}(t,\xi,\omega) - \nabla_{\xi\xi}^2\mathcal{V}(t,0,0) \Vert_2
		\leq c_1 \left(\check{M}_3 \Vert \xi \Vert + \check{M}_6 \Vert \omega \Vert_{L^2(0,t;\mathbb{R}^r)} \right) \eqqcolon g_1(\Vert \xi \Vert, \Vert \omega \Vert_{L^2(0,t;\mathbb{R}^r)}).
	\end{equation*}
	After noting that it holds $\nabla_\xi \mathcal{V}(t,0,0) = 0$ a similar argument shows that for all $\omega \in \mathcal{U}_{\delta_5}(0)$ one has
	\begin{equation*}
		\Vert \nabla_\xi \mathcal{V}(t,0,\omega) \Vert 
		\leq c_2 \check{M}_5 \Vert \omega \Vert_{L^2(0,t;\mathbb{R}^r)}
		\eqqcolon g_2(\Vert \omega \Vert_{L^2(0,t;\mathbb{R}^r)}),
	\end{equation*}
	where $c_2 > 0$ is such that $\Vert \xi \Vert \leq c_2 \max\{ \vert \xi_i \vert \colon i=1,...,n \}$ for all $\xi \in \mathbb{R}^n$.
	
	For the verification of the last assumption made in \cite{Hol70} we set $\alpha = \tfrac{1}{2}$, $\delta_7 = \delta_6$ and $\epsilon(\delta_7) = 2c_2M_\mathcal{V}\check{M}_5\delta_7$. A possible decrease of $\delta_7$ ensures $\epsilon(\delta_7) < \delta_5$ and 
	\begin{equation*}
		\delta_7 M_\mathcal{V}c_1 \left( \check{M}_6 + 2c_2 \check{M}_3 \check{M}_5 M_\mathcal{V} \right) \leq \tfrac{1}{2}.
	\end{equation*}
	Now \cite[Theorem]{Hol70} delivers the existence of a continuous operator $F $ that maps $\mathcal{U}_{\delta_7}(0) \subset L^2(0,t;\mathbb{R}^r)$ into $\overline{\mathcal{U}_{\epsilon(\delta_7)}(0)}$ with the property $\nabla_\xi \mathcal{V}(t,F(\omega),\omega) = 0$. Another decrease of $\delta_7$ ensures $\epsilon(\delta_7) < \hat{c}_1 \delta_6$. Now for any $\omega \in L^2(0,t;\mathbb{R}^r)$ with $\Vert \omega \Vert_{L^2(0,t;\mathbb{R}^r)} < \delta_7$ there exists $\xi^* = F(\omega)$ satisfying $\Vert \xi^* \Vert < \hat{c}\delta_6 $ and $\nabla_\xi \mathcal{V}(t,\xi^*,\omega) = 0$. Lemma \ref{lem: VFminimizer} yields the assertion for $t \in (0,T]$. The result obviously holds for $t = 0$.
\end{proof}
With this result at hand we can define $\widehat{x}(t)$ as the trajectory given pointwise in time as the minimizer of the value function with respect to $\xi$.
\begin{corollary}\label{cor: xHatWD}
	Assume $\omega \in L^2(0,T;\mathbb{R}^r)$ satisfies $\Vert \omega \Vert_{L^2(0,T;\mathbb{R}^r)} < \delta_7$.  Then for all $t \in [0,T]$ the expression $\argmin\limits_{\xi} \mathcal{V}(t,\xi,\omega) \in \mathbb{R}^n$ is well defined and we define the function 
	\begin{equation*}
		\widehat{x}_\omega \colon [0,T] \rightarrow \mathbb{R}^n,~~~~~
		\widehat{x}_\omega(t) = \argmin\limits_{\xi} \mathcal{V}(t,\xi,\omega).
	\end{equation*}
	For all $t \in [0,T]$ it satisfies $\Vert \widehat{x}_\omega(t) \Vert \leq \hat{c}_2 \delta_7 < \hat{c}_1 \delta_6 < \delta_5$.
\end{corollary}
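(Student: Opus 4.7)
The corollary is essentially a direct consequence of Proposition \ref{prop: WDargmin}, combined with the observation that the hypothesis is phrased in terms of the $L^2$-norm over the full interval $[0,T]$. The plan is therefore to fix an arbitrary $t \in [0,T]$, restrict the given $\omega$ to $[0,t]$, verify the smallness hypothesis of Proposition \ref{prop: WDargmin}, and read off the well-posedness and norm bound for the pointwise minimizer.

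More concretely, for fixed $t \in [0,T]$ I would note that the restriction $\omega|_{[0,t]}$ lies in $L^2(0,t;\mathbb{R}^r)$ with
\begin{equation*}
\Vert \omega|_{[0,t]} \Vert_{L^2(0,t;\mathbb{R}^r)} \leq \Vert \omega \Vert_{L^2(0,T;\mathbb{R}^r)} < \delta_7.
\end{equation*}
Since $\delta_7$ from Proposition \ref{prop: WDargmin} is chosen independently of $t$, the hypothesis of that proposition is satisfied for the control problem on $[0,t]$. Consequently, $\xi \mapsto \mathcal{V}(t,\xi,\omega)$ admits a unique minimizer $\xi^{\star}(t) \in \mathbb{R}^n$, so the expression $\argmin_{\xi} \mathcal{V}(t,\xi,\omega)$ is well defined; we set $\widehat{x}_{\omega}(t) \coloneqq \xi^{\star}(t)$. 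The norm bound from Proposition \ref{prop: WDargmin} gives directly $\Vert \widehat{x}_{\omega}(t) \Vert \leq \hat{c}_2 \delta_7$.

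To obtain the final chain $\hat{c}_2 \delta_7 < \hat{c}_1 \delta_6 < \delta_5$, I would invoke the freedom to shrink $\delta_7$ once more: since $\delta_7 \in (0,\delta_6]$ and the constants $\hat{c}_1, \hat{c}_2, \delta_5, \delta_6$ are already fixed by earlier results, a final reduction of $\delta_7$ (absorbed into the statement of Proposition \ref{prop: WDargmin} and hence propagated here) makes $\hat{c}_2 \delta_7 < \hat{c}_1 \delta_6$, while the inequality $\hat{c}_1 \delta_6 < \delta_5$ is guaranteed by the choice $\delta_6 \leq \delta_5$ together with the strict inclusion $\mathcal{K} \subset \mathcal{U}_{\delta_5}(0)$ established in the proof of Lemma \ref{lem: VFminimizer} (again, via one further minor decrease if needed). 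No step presents a real obstacle; the only subtlety is being careful that all constants and smallness thresholds have been chosen to be independent of $t$, which is precisely what the preceding sections have been at pains to ensure.
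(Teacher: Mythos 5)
Your proposal is correct and matches the intended argument: the corollary is an immediate pointwise application of Proposition \ref{prop: WDargmin} to the restrictions $\omega\vert_{[0,t]}$, whose $L^2(0,t;\mathbb{R}^r)$-norms are dominated by $\Vert \omega \Vert_{L^2(0,T;\mathbb{R}^r)} < \delta_7$, with the time-independence of $\delta_7$ doing all the work. The only cosmetic difference is that the chain $\hat{c}_2\delta_7 < \hat{c}_1\delta_6 < \delta_5$ requires no further reduction at this stage --- the inequality $\hat{c}_2\delta_7 = \epsilon(\delta_7) < \hat{c}_1\delta_6$ is already enforced inside the proof of Proposition \ref{prop: WDargmin}, and $\hat{c}_1\delta_6 < \delta_5$ by the decrease of $\delta_6$ ensuring $\mathcal{K} \subset \mathcal{U}_{\delta_5}(0)$ in Lemma \ref{lem: VFminimizer}, exactly as you note.
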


In the following we will characterize this trajectory as a solution of the observer equation. We first do this for continuous data $\omega$ before using a density argument to extend the result to $\omega \in L^2(0,T;\mathbb{R}^r)$. As a first step it is shown that for continuous $\omega$ the trajectory $\widehat{x}_\omega$ is differentiable in $(0,T)$.

\begin{lemma}\label{lem: xHatC1}
	Assume $\omega \in L^2(0,T;\mathbb{R}^r)$ satisfies $\Vert \omega \Vert_{L^2(0,T;\mathbb{R}^r)} < \delta_7$ and is continuous in $(0,T)$. Then $\widehat{x}_\omega$ is differentiable in all $t\in (0,T)$. For any $t \in (0,T)$ it holds
	\begin{equation*}
		\dot{\widehat{x}}_\omega(t) 
		= h(t,\widehat{x}_\omega(t)) + \alpha \nabla_{\xi\xi}^2 \mathcal{V}(t,\widehat{x}_\omega(t),\omega)^{-1} C^\top (\omega(t) - C \widehat{x}_\omega(t)), 
	\end{equation*}
	where again $h(t,\xi) = A\xi + G(\tilde{x}(t) \otimes I_n)\xi + G(I_n \otimes \tilde{x}(t))\xi + G(\xi \otimes \xi)$.
\end{lemma}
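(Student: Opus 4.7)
The plan is to implicitly differentiate the stationarity identity $\nabla_\xi \mathcal{V}(t,\widehat{x}_\omega(t),\omega) = 0$ (from Proposition \ref{prop: WDargmin}) in $t$, using the HJB equation to access the time derivative of $\nabla_\xi \mathcal{V}$ and Theorem \ref{Theorem: Hessian invertible close to zero} to invert the spatial Hessian. As a preparatory step I first establish continuity of $\widehat{x}_\omega$ on $(0,T)$: given $t_n \to t_0$, Corollary \ref{cor: xHatWD} provides $\|\widehat{x}_\omega(t_n)\| < \delta_5$, so a subsequence converges to some $\xi_\infty$; passing to the limit in $\nabla_\xi \mathcal{V}(t_n,\widehat{x}_\omega(t_n),\omega) = 0$ via the joint continuity of $\nabla_\xi \mathcal{V}$ in $t$ (Proposition \ref{Proposition: Space derivatives continuous in time}) and $\xi$ (Corollary \ref{Corollary: Value function C infty wrt space}) gives $\nabla_\xi \mathcal{V}(t_0,\xi_\infty,\omega) = 0$, and uniqueness of the minimizer forces $\xi_\infty = \widehat{x}_\omega(t_0)$, so the whole sequence converges.

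Next I would extract $\partial_t \nabla_\xi \mathcal{V}$ from the HJB. Since $\omega$ is continuous, Theorem \ref{Theorem: HJB holds} yields $\partial_t \mathcal{V}(t,\xi,\omega) = R(t,\xi)$ with
\[
R(t,\xi) = -\langle \nabla_\xi \mathcal{V}(t,\xi,\omega), h(t,\xi)\rangle - \tfrac{1}{2}\|F^\top \nabla_\xi \mathcal{V}(t,\xi,\omega)\|^2 + \tfrac{\alpha}{2}\|\omega(t)-C\xi\|^2.
\]
Smoothness of $\mathcal{V}$ in $\xi$ and continuity of $\nabla_\xi \mathcal{V}$, $\nabla^2_{\xi\xi}\mathcal{V}$ in $t$ make $\nabla_\xi R$ jointly continuous on $(0,T) \times \mathcal{U}_{\delta_5}(0)$, with
\[
\nabla_\xi R(t,\xi) = -\nabla^2_{\xi\xi}\mathcal{V}\, h(t,\xi) - (D_\xi h(t,\xi))^\top \nabla_\xi \mathcal{V} - \nabla^2_{\xi\xi}\mathcal{V}\, FF^\top \nabla_\xi \mathcal{V} - \alpha C^\top(\omega(t)-C\xi).
\]
Integrating the HJB in time and differentiating under the integral sign (legitimate by the joint continuity of $R$ and $\nabla_\xi R$) gives, for all $\xi$ in the neighborhood,
\[
\nabla_\xi \mathcal{V}(t+\tau,\xi,\omega) - \nabla_\xi \mathcal{V}(t,\xi,\omega) = \int_t^{t+\tau} \nabla_\xi R(s,\xi)\,\mathrm{d}s .
\]

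Finally, fixing $t_0 \in (0,T)$ and subtracting the two stationarity conditions with the intermediate term $\nabla_\xi \mathcal{V}(t_0, \widehat{x}_\omega(t_0+\tau), \omega)$ inserted, the displayed identity handles the time difference while the spatial mean value theorem handles the remainder, producing
\[
0 = \int_{t_0}^{t_0+\tau} \nabla_\xi R(s,\widehat{x}_\omega(t_0+\tau))\,\mathrm{d}s + \mathcal{H}(\tau)\, \bigl(\widehat{x}_\omega(t_0+\tau) - \widehat{x}_\omega(t_0)\bigr),
\]
where $\mathcal{H}(\tau) \coloneqq \int_0^1 \nabla^2_{\xi\xi}\mathcal{V}\bigl(t_0, \widehat{x}_\omega(t_0) + \theta(\widehat{x}_\omega(t_0+\tau)-\widehat{x}_\omega(t_0)),\omega\bigr)\,\mathrm{d}\theta$. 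By continuity of the Hessian in $(t,\xi)$ and its invertibility (Theorem \ref{Theorem: Hessian invertible close to zero}), $\mathcal{H}(\tau)$ is invertible for all sufficiently small $\tau$ with $\mathcal{H}(\tau) \to \nabla^2_{\xi\xi}\mathcal{V}(t_0,\widehat{x}_\omega(t_0),\omega)$. Dividing by $\tau$, using continuity of $\widehat{x}_\omega$ and of $\nabla_\xi R$ and passing to the limit gives
\[
\dot{\widehat{x}}_\omega(t_0) = -\nabla^2_{\xi\xi}\mathcal{V}(t_0,\widehat{x}_\omega(t_0),\omega)^{-1}\, \nabla_\xi R(t_0,\widehat{x}_\omega(t_0)) .
\]
Plugging $\nabla_\xi \mathcal{V}(t_0,\widehat{x}_\omega(t_0),\omega) = 0$ into the expression for $\nabla_\xi R$ kills the two gradient-dependent terms and yields the claimed formula.

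The main obstacle is rigorously justifying the interchange of $\partial_t$ and $\nabla_\xi$ on $\mathcal{V}$ (equivalently, the Leibniz differentiation under the integral of the HJB). This is precisely where the time-uniform spatial regularity results of Section~4, and in particular the fact that all constants in Proposition \ref{Proposition: Bounds for derivatives of the value function} and Proposition \ref{Proposition: Space derivatives continuous in time} are independent of $t$, are indispensable.
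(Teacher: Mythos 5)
Your proof is correct and arrives at the paper's final computation by the same core mechanism --- differentiate the stationarity identity $\nabla_\xi \mathcal{V}(t,\widehat{x}_\omega(t),\omega)=0$ in $t$, use the $\xi$-gradient of the HJB right-hand side, and let stationarity annihilate the two gradient-dependent terms before inverting the Hessian --- but the two regularity steps feeding into it are handled with genuinely different machinery. The paper obtains the interchange $\partial_t \nabla_\xi \mathcal{V} = \nabla_\xi \partial_t \mathcal{V}$ from the arguments of Lemma \ref{Lemma: Switching Derivatives} (a Rudin-type mixed-partials theorem) and then gets differentiability of $\widehat{x}_\omega$ in one stroke from the classical implicit function theorem applied to $P(t,\xi)=\nabla_\xi \mathcal{V}(t,\xi,\omega)$, which is shown to be $C^1$ on $(0,T)\times \mathcal{U}_{\delta_5}(0)$. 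You instead (a) prove continuity of $\widehat{x}_\omega$ directly via compactness of the bounded sequence $\widehat{x}_\omega(t_n)$ together with the uniqueness statement of Lemma \ref{lem: VFminimizer} --- a point the paper leaves implicit in its IFT step, where one must still identify the locally unique IFT branch with the global minimizer, so your explicit argument is a genuine improvement in carefulness; (b) obtain $\partial_t \nabla_\xi \mathcal{V} = \nabla_\xi R$ by integrating the HJB (valid by the fundamental theorem of calculus since $s\mapsto R(s,\xi)$ is continuous for continuous $\omega$) and differentiating under the integral, which is a legitimate alternative to the mixed-partials theorem and consumes exactly the same inputs, namely the joint continuity of $\nabla_\xi\mathcal{V}$ and $\nabla^2_{\xi\xi}\mathcal{V}$ built from Proposition \ref{Proposition: Space derivatives continuous in time} and the time-uniform bounds of Proposition \ref{Proposition: Bounds for derivatives of the value function}; and (c) replace the IFT by a hands-on difference-quotient computation with the averaged Hessian $\mathcal{H}(\tau)$ (the analogous computation with $\tau<0$ gives the two-sided derivative). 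One small strengthening of your step (c): $\mathcal{H}(\tau)$ is positive definite, hence invertible, for \emph{every} $\tau$, not merely small $\tau$, since by Corollary \ref{cor: xHatWD} the segment joining $\widehat{x}_\omega(t_0)$ and $\widehat{x}_\omega(t_0+\tau)$ lies in the ball of radius $\hat{c}_2\delta_7 < \delta_5$, so Theorem \ref{Theorem: Hessian invertible close to zero} applies to each integrand. In sum, the paper's route is shorter because it leverages already-established lemmas, while yours is more self-contained and elementary, in effect reproving the derivative interchange through the integral identity and making explicit the continuity of $\widehat{x}_\omega$ that the IFT route needs anyway.
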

\begin{proof}
	With Lemma \ref{Lemma: Switching Derivatives} (i) it holds that the mappings $(t,\xi) \mapsto \nabla_\xi \mathcal{V}(t,\xi,\omega)$ and $(t,\xi) \mapsto \nabla_{\xi\xi}^2 \mathcal{V}(t,\xi,\omega)$ are continuous in $(0,T) \times \mathcal{U}_{\delta_5}(0)$. Note that $\Vert \widehat{x}_\omega(t) \Vert < \delta_5$. The arguments used in the proof of Lemma \ref{Lemma: Switching Derivatives} further show that $\nabla_\xi \mathcal{V}(\cdot,\cdot,\omega)$ is differentiable in $t$ and that $\partial_t \nabla_\xi \mathcal{V}(t,\xi,\omega) = \nabla_\xi \partial_t  \mathcal{V}(t,\xi,\omega)$. With the HJB equation we get that $(t,\xi) \mapsto \partial_t \nabla_\xi \mathcal{V}(t,\xi,\omega)$ is continuous in $(0,T) \times \mathcal{U}_{\delta_5}(0)$. Hence $\nabla_\xi \mathcal{V}(\cdot,\cdot,\omega) \in C^1((0,T) \times \mathcal{U}_{\delta_5}(0);\mathbb{R}^n)$. Let $t_0 \in (0,T)$ be arbitrary. Applying the implicit function theorem \cite[Theorem 4.E]{Zei95AMS109} in the point $(t_0,\widehat{x}_{\omega}(t_0))$ to the mapping $P(t,\xi) = \nabla_\xi \mathcal{V}(t,\xi,\omega)$ shows that $\widehat{x}_\omega$ is differentiable in $t_0$.
	
	To obtain the formula for the derivative note that for all $t \in (0,T)$ it holds 
	\begin{equation}\label{eq:gradzero}
		\nabla_\xi \mathcal{V}(t,\widehat{x}_\omega(t),\omega) = 0. 
	\end{equation}
	Taking the derivative with respect to $t$ and applying the chain rule yields
	\begin{equation*}
		\nabla_{\xi\xi}^2\mathcal{V}(t,\widehat{x}_\omega(t),\omega)~ \dot{\widehat{x}}_\omega(t)
		= - \partial_t \nabla_\xi \mathcal{V}(t,\widehat{x}_\omega(t),\omega).
	\end{equation*}
	As mentioned above we can change the order of derivation on the right hand side. With the HJB and \eqref{eq:gradzero} it follows
	\begin{equation*}
		\nabla_{\xi\xi}^2\mathcal{V}(t,\widehat{x}_\omega(t),\omega)~ \dot{\widehat{x}}_\omega(t)
		= \nabla_{\xi\xi}^2\mathcal{V}(t,\widehat{x}_\omega(t),\omega)~ h(t,\widehat{x}_\omega(t))
		+ \alpha C^\top (\omega(t) - C \widehat{x}_\omega(t)).
	\end{equation*}
	The bounds on $\Vert \omega \Vert_{L^2(0,T;\mathbb{R}^r)}$ and $\Vert \widehat{x}_\omega(t) \Vert$ ensure the exitence of the inverse of the Hessian and the assertion is shown.
\end{proof}
With this lemma we can show that for continuous $\omega$ the trajectory $\widehat{x}_\omega$ is a weak solution of \eqref{eq:ObsShif}.
\begin{proposition}\label{prop: obsEqContW}
	Assume $\omega \in L^2(0,T;\mathbb{R}^r)$ satisfies $\Vert \omega \Vert_{L^2(0,T;\mathbb{R}^r)} < \delta_7$ and is continuous in $[0,T]$. Then $\widehat{x}_\omega$ lies in $H^1(0,T;\mathbb{R}^n)$ and is a weak solution to the observer equation \eqref{eq:ObsShif}.
\end{proposition}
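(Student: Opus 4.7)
The plan is to build directly on Lemma \ref{lem: xHatC1}, which already supplies the pointwise ODE on $(0,T)$ together with the correct formula for $\dot{\widehat{x}}_\omega$. What remains is to promote this pointwise differentiability to genuine $H^1$-regularity on the closed interval $[0,T]$, and to verify that the candidate is a weak solution of the initial value problem \eqref{eq:ObsShif}. Throughout, I will use Corollary \ref{cor: xHatWD} to keep $\widehat{x}_\omega(t)$ inside the neighborhood $\mathcal{U}_{\delta_5}(0)$ on which the preceding regularity and invertibility results are available.

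First, I would establish a uniform-in-$t$ bound for $\dot{\widehat{x}}_\omega$ on $(0,T)$. The polynomial part $h(t,\widehat{x}_\omega(t))$ is bounded via the $L^\infty$-bound $\Vert \widehat{x}_\omega(t)\Vert \le \hat{c}_2 \delta_7$ from Corollary \ref{cor: xHatWD} together with the bound $\Vert \tilde{x}\Vert_{L^\infty(0,T;\mathbb{R}^n)}$. For the feedback term, boundedness of $\omega \in C([0,T];\mathbb{R}^r)$ controls $\omega(t) - C\widehat{x}_\omega(t)$, and the operator norm of $\nabla^2_{\xi\xi}\mathcal{V}(t,\widehat{x}_\omega(t),\omega)^{-1}$ is controlled via a Neumann-type perturbation of $\nabla^2_{\xi\xi}\mathcal{V}(t,0,0)^{-1}$, using the uniform estimate of Lemma \ref{lem: EstHesInv} and the third-order estimates in Proposition \ref{Proposition: Bounds for derivatives of the value function} to quantify $\Vert \nabla^2_{\xi\xi}\mathcal{V}(t,\widehat{x}_\omega(t),\omega) - \nabla^2_{\xi\xi}\mathcal{V}(t,0,0)\Vert$, exactly as done inside the proof of Proposition \ref{prop: WDargmin}. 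This yields $\dot{\widehat{x}}_\omega \in L^\infty(0,T;\mathbb{R}^n)$.

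Second, I would extend continuity of $\widehat{x}_\omega$ to the boundary. The $L^\infty$-bound on $\dot{\widehat{x}}_\omega$ gives Lipschitz continuity on compact subintervals of $(0,T)$, so the one-sided limits $\widehat{x}_\omega(0^+)$ and $\widehat{x}_\omega(T^-)$ exist. To match $\widehat{x}_\omega(0^+)$ with the prescribed value $\widehat{x}_\omega(0)=0$, I would pass to the limit $t\to 0^+$ in the identity $\nabla_\xi\mathcal{V}(t,\widehat{x}_\omega(t),\omega)=0$ from Proposition \ref{prop: WDargmin}. The uniform bound $\Vert \nabla^2_{\xi\xi}\mathcal{V}\Vert \le \check{M}_2$ from Proposition \ref{Proposition: Bounds for derivatives of the value function} gives a Lipschitz estimate in $\xi$ uniform in $t$, and combined with the continuity of $t\mapsto \nabla_\xi\mathcal{V}(t,\xi,\omega)$ from Proposition \ref{Proposition: Space derivatives continuous in time} this upgrades to joint continuity of $\nabla_\xi\mathcal{V}$ on $[0,T]\times\mathcal{U}_{\delta_5}(0)$. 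The limit then reads $\nabla_\xi\mathcal{V}(0,\widehat{x}_\omega(0^+),\omega)=\widehat{x}_\omega(0^+)=0$, since $\mathcal{V}(0,\cdot,\omega)=\tfrac12\|\cdot\|^2$; an identical argument handles $t=T$ using the convention of extending the time horizon already employed at the end of the proof of Theorem \ref{Theorem: HJB holds}.

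Finally, the previous two steps combined yield $\widehat{x}_\omega \in W^{1,\infty}(0,T;\mathbb{R}^n) \hookrightarrow H^1(0,T;\mathbb{R}^n)$, and the classical pointwise derivative supplied by Lemma \ref{lem: xHatC1} coincides with the weak derivative almost everywhere. Rewriting $h(t,\widehat{x}_\omega(t)) = A'(t)\widehat{x}_\omega(t) + G(\widehat{x}_\omega(t)\otimes\widehat{x}_\omega(t))$ aligns the identity with \eqref{eq:ObsShif}, and together with $\widehat{x}_\omega(0)=0$ this establishes the weak solution property. I expect the main obstacle to lie in the second step: Lemma \ref{lem: xHatC1} produces the ODE only on the open interval $(0,T)$, so the extension to $t=0$ cannot be obtained from the differential equation itself but must be read off from the algebraic characterization $\nabla_\xi\mathcal{V}(t,\widehat{x}_\omega(t),\omega)=0$, and the passage to the limit there is precisely what requires the time-uniform bounds cultivated throughout \Cref{Section: Sensitivity analysis,Section: Regularity of the Hessian}.
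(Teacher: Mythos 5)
Your proposal is correct and follows essentially the same route as the paper's proof: the pointwise ODE from Lemma \ref{lem: xHatC1}, a time-uniform bound on $\dot{\widehat{x}}_\omega$ obtained from $\Vert \widehat{x}_\omega(t)\Vert \leq \hat{c}_2\delta_7$, the continuity of $\omega$ and Lemma \ref{lem: EstHesInv}, and identification of the value at $t=0$ by passing to the limit in $\nabla_\xi\mathcal{V}(t,\widehat{x}_\omega(t),\omega)=0$ using that $\nabla_\xi\mathcal{V}(0,\cdot,\omega)$ is the identity, concluding that the classical derivative agrees with the weak one almost everywhere. The only cosmetic difference is the endgame: you extract the one-sided limits from uniform Lipschitz continuity and land in $W^{1,\infty}(0,T;\mathbb{R}^n)\hookrightarrow H^1(0,T;\mathbb{R}^n)$, whereas the paper first proves continuity at $t=0$ via a subsequence--compactness argument and then verifies absolute continuity directly from the $\epsilon$--$\delta$ definition; both yield the assertion of Proposition \ref{prop: obsEqContW}, and your handling of $t=T$ via the horizon-extension convention matches the paper's own practice.
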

\begin{proof}
	Due to Lemma \ref{lem: xHatC1} we already know that $\widehat{x}_\omega$ is differentiable almost everywhere in $[0,T]$ with a derivative that lies in $L^2(0,T;\mathbb{R}^n)$. Furthermore it holds $\widehat{x}_\omega(0) = \argmin\limits_\xi \tfrac{1}{2} \Vert \xi \Vert^2 = 0$. It remains to show that $\widehat{x}_\omega$ is absolutely continuous in $[0,T]$. To that end we first show that it is continuous in $t = 0$.
	
	Let $(t_k) \subset (0,T)$ be an arbitrary sequence with the property $t_k \to 0$ for $k \to \infty$. We will show that $\widehat{x}_\omega(t_k) \to 0 = \widehat{x}_\omega(0)$ for $k \to \infty$. First note that for all $k$ it holds $\Vert \widehat{x}_\omega(t_k) \Vert < \hat{c}_2 \delta_7$ which implies that $(\widehat{x}_\omega(t_k))$ is a bounded sequence in $\mathbb{R}^n$. Hence any arbitrary subsequence admits a converging subsequence. It will be denoted by $(\widehat{x}_\omega(t_k))$ with the limit $\widehat{x} \in \mathbb{R}^n$. With Lemma \ref{Lemma: Switching Derivatives} (i) it holds that $(t,\xi) \mapsto \nabla_\xi\mathcal{V}(t,\xi,\omega)$ is continuous in $[0,T] \times \mathcal{U}_{\delta_5}(0)$. Then it follows
	\begin{equation*}
		\widehat{x} 
		= \nabla_\xi \mathcal{V}(0,\widehat{x},\omega)
		= \lim\limits_{k \to \infty} \nabla_\xi \mathcal{V}(t_k,\widehat{x}_\omega(t_k),\omega) = 0,
	\end{equation*}
	where the first equality is due the fact that in $t = 0$ it holds $\mathcal{V}(0,\xi,\omega) = \tfrac{1}{2} \Vert \xi \Vert^2$ for all $\xi \in \mathbb{R}^n$. This implies that every subsequence of $\widehat{x}_\omega(t_k)$ admits a subsequence that converges to $0$. Hence the sequence itself converges to zero. Therefore $\widehat{x}_\omega$ is continuous in $t = 0$. It remains to show that it is absolutely continuous in $[0,T]$.
	
	To that end let $\epsilon > 0$ be arbitrary. Then there exists $\delta > 0$ such that it holds
	\begin{equation*}
		\vert a \vert < \delta \implies \Vert \widehat{x}_\omega(a) \Vert < \tfrac{1}{2} \epsilon. 
	\end{equation*}
	Further note that $\dot{\widehat{x}}_\omega$ is bounded on $(0,T)$. This is due to Lemma \ref{lem: xHatC1}, the bound $\Vert \widehat{x}_\omega(t) \Vert \leq \hat{c}_2\delta_7$ and the assumed continuity of $\omega$. Let $(a_k,b_k)$ be a finite sequence of pairwise disjoint subintervals from $[0,T]$ with $a_k < b_k$ for $k = 1,...,N$. Without loss of generality assume that $a_1 = 0$. With Taylor's Theorem it holds
	\begin{equation*}
	\begin{aligned}
		\sum_{k=1}^N \Vert \widehat{x}_\omega(b_k) - \widehat{x}_\omega(a_k) \Vert
		&= \Vert \widehat{x}_\omega(b_1) \Vert 
		+ \sum_{k=2}^N \Vert \widehat{x}_\omega(b_k) - \widehat{x}_\omega(a_k) \Vert
		\leq \Vert \widehat{x}_\omega(b_k) \Vert \\
		&+ \max_{s \in (0,T)} \Vert \dot{\widehat{x}}_\omega(s) \Vert \sum_{k=2}^N b_k - a_k.
	\end{aligned}
	\end{equation*}
	After an appropriate decrease of $\delta$ it holds $\delta \max_{s \in (0,T)} \Vert \dot{\widehat{x}}_\omega(s) \Vert < \tfrac{1}{2} \epsilon$. Then for any such finite sequence of subintervals it holds
	\begin{equation*}
		\sum_{k=1}^N b_k - a_k < \delta
		\implies \sum_{k=1}^N \Vert \widehat{x}_\omega(b_k) - \widehat{x}_\omega(a_k) \Vert < \epsilon,
	\end{equation*}
	which shows that $\widehat{x}_\omega$ is absolutely continuous in $[0,T]$. The almost everywhere existing classical derivative given in Lemma \ref{lem: xHatC1} is an element of $L^2(0,t;\mathbb{R}^n)$. For one-dimensional domains this implies that $\widehat{x}_\omega$ admits a weak derivative which almost everywhere agrees with the classical one and the assertion follows.
\end{proof}
Finally a density argument is used to show that the assertion of Proposition \ref{prop: obsEqContW} also holds for $\omega \in L^2(0,T;\mathbb{R}^r)$.
\begin{theorem}\label{thm: MainThm}
	Assume $\omega \in L^2(0,T;\mathbb{R}^r)$ satisfies $\Vert \omega \Vert_{L^2(0,T;\mathbb{R}^r)} < \delta_7$. Then $\widehat{x}_\omega$ lies in $H^1(0,T;\mathbb{R}^n)$ and is a weak solution to the observer equation \eqref{eq:ObsShif}.
\end{theorem}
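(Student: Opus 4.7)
The plan is a density argument. Since $C([0,T];\mathbb{R}^r)$ is dense in $L^2(0,T;\mathbb{R}^r)$, I pick a sequence $(\omega_k) \subset C([0,T];\mathbb{R}^r)$ with $\omega_k \to \omega$ in $L^2(0,T;\mathbb{R}^r)$ and $\Vert \omega_k \Vert_{L^2(0,T;\mathbb{R}^r)} < \delta_7$ for all $k$ (achievable by mollification and a slight rescaling, using the strict inequality $\Vert \omega \Vert_{L^2(0,T;\mathbb{R}^r)} < \delta_7$). Proposition \ref{prop: obsEqContW} then delivers, for each $k$, a function $\widehat{x}_{\omega_k} \in H^1(0,T;\mathbb{R}^n)$ solving \eqref{eq:ObsShif} weakly with data $\omega_k$. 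The objective is to pass to the limit $k \to \infty$ in both the trajectories and in the weak formulation, and to identify the limit with $\widehat{x}_\omega$.

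To extract a convergent subsequence I first establish a uniform $H^1$-bound. From Corollary \ref{cor: xHatWD} one has $\Vert \widehat{x}_{\omega_k} \Vert_{L^\infty(0,T;\mathbb{R}^n)} \leq \hat{c}_2 \delta_7$. Inserting this into the pointwise ODE from Lemma \ref{lem: xHatC1}, and combining it with the boundedness of $\tilde{x}$, the bound on $\Vert \nabla_{\xi\xi}^2 \mathcal{V} \Vert$ from Proposition \ref{Proposition: Bounds for derivatives of the value function}, and a uniform (in $t$ and $k$) upper bound on $\Vert \nabla_{\xi\xi}^2 \mathcal{V}(t,\widehat{x}_{\omega_k}(t),\omega_k)^{-1} \Vert_2$, one bounds $\dot{\widehat{x}}_{\omega_k}$ in $L^\infty(0,T;\mathbb{R}^n)$, hence in $L^2$, independently of $k$. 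By the compact embedding $H^1(0,T;\mathbb{R}^n) \hookrightarrow C([0,T];\mathbb{R}^n)$, a subsequence (not relabelled) converges weakly in $H^1$ and uniformly on $[0,T]$ to some $y \in H^1(0,T;\mathbb{R}^n)$.

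The continuity part of Proposition \ref{prop: WDargmin} identifies the limit: for every fixed $t \in [0,T]$, the restriction $\omega_k\vert_{[0,t]}$ converges to $\omega\vert_{[0,t]}$ in $L^2(0,t;\mathbb{R}^r)$, so $\widehat{x}_{\omega_k}(t) \to \widehat{x}_\omega(t)$, which must coincide with $y(t)$. Hence $y = \widehat{x}_\omega$, which already yields $\widehat{x}_\omega \in H^1(0,T;\mathbb{R}^n)$. It remains to pass to the limit in
\begin{equation*}
\int_0^T \langle \dot{\widehat{x}}_{\omega_k}(t), \phi(t) \rangle \, \mathrm{d}t = \int_0^T \langle r_k(t), \phi(t) \rangle \, \mathrm{d}t \qquad \forall \, \phi \in C_0^\infty((0,T);\mathbb{R}^n),
\end{equation*}
where $r_k(t)$ denotes the right-hand side of \eqref{eq:ObsShif} evaluated at $(t,\widehat{x}_{\omega_k}(t),\omega_k)$. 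The left-hand side converges by the weak $L^2$-convergence of the derivatives. For $r_k$, the drift $h(t,\widehat{x}_{\omega_k}(t))$ converges strongly in $L^2$ by uniform convergence of $\widehat{x}_{\omega_k}$ and continuity of $h$. The correction term splits into a matrix factor $\nabla_{\xi\xi}^2 \mathcal{V}(t,\widehat{x}_{\omega_k}(t),\omega_k)^{-1}$ that is uniformly bounded and converges pointwise in $t$ (via continuity of the Hessian in all three arguments, cf.~Corollary \ref{Corollary: Value function C infty wrt space} and Proposition \ref{Proposition: Space derivatives continuous in time}, combined with continuity of matrix inversion on positive definite matrices) and a vector factor $C^\top(\omega_k - C\widehat{x}_{\omega_k})$ which converges strongly in $L^2$; dominated convergence then yields $L^2$-convergence of the product.

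The main technical obstacle is securing the uniform bound on $\Vert \nabla_{\xi\xi}^2 \mathcal{V}(t,\widehat{x}_{\omega_k}(t),\omega_k)^{-1} \Vert_2$ that is independent of both $t$ and $k$. I expect to obtain this by revisiting the argument in the proof of Theorem \ref{Theorem: Hessian invertible close to zero}: the perturbation estimate there gives $\Vert \nabla_{\xi\xi}^2\mathcal{V}(t,\xi,\omega) - \nabla_{\xi\xi}^2\mathcal{V}(t,0,0)\Vert_2 \leq c(\check{M}_3 + \check{M}_6)\delta_5$ uniformly in $(t,\xi,\omega) \in [0,T] \times \mathcal{U}_{\delta_5}(0)$, while $t \mapsto \Vert \nabla_{\xi\xi}^2 \mathcal{V}(t,0,0)^{-1} \Vert_2$ was shown to be continuous on the compact interval $[0,T]$ in the same proof, hence bounded. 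A Neumann series argument then yields the desired uniform bound on the perturbed inverse, closing the estimate.
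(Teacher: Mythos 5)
Your overall strategy is the paper's: approximate $\omega$ by continuous $\omega_k$ with $\Vert \omega_k \Vert_{L^2(0,T;\mathbb{R}^r)} < \delta_7$, invoke Proposition~\ref{prop: obsEqContW} for each $k$, identify the limit trajectory pointwise via the continuity statement of Proposition~\ref{prop: WDargmin}, and pass to the limit in the weak formulation using a uniform bound on the inverse Hessian --- which you re-derive by the Neumann-series argument, although this is verbatim the content of Lemma~\ref{lem: EstHesInv}, which you could simply cite. Where you genuinely diverge is the limit passage itself: the paper never extracts a subsequence and needs no a priori $H^1$ bound. It uses only the pointwise convergence $\widehat{x}_{\omega_k}(t) \to \widehat{x}_\omega(t)$, the uniform bound $\Vert \widehat{x}_{\omega_k}(t) \Vert \leq \hat{c}_2\delta_7$ and dominated convergence to establish $\int_0^T \langle \widehat{x}_\omega, \dot{\varphi} \rangle \,\mathrm{d}t = - \int_0^T \langle r, \varphi \rangle \,\mathrm{d}t$ for all test functions, where $r(t) = h(t,\widehat{x}_\omega(t)) + \alpha \nabla^2_{\xi\xi}\mathcal{V}(t,\widehat{x}_\omega(t),\omega)^{-1}C^\top(\omega(t) - C\widehat{x}_\omega(t)) \in L^2(0,T;\mathbb{R}^n)$; this single identity simultaneously identifies the weak derivative and yields $\widehat{x}_\omega \in H^1(0,T;\mathbb{R}^n)$, so compactness is superfluous.

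There is one step in your argument that fails as stated: the uniform $L^\infty$ bound on $\dot{\widehat{x}}_{\omega_k}$. The right-hand side of \eqref{eq:ObsShif} contains the term $\alpha \nabla^2_{\xi\xi}\mathcal{V}(t,\widehat{x}_{\omega_k}(t),\omega_k)^{-1}C^\top \omega_k(t)$, whose pointwise norm is controlled only in terms of $\Vert \omega_k(t) \Vert$. Your approximating sequence is controlled solely in the $L^2$ norm, and if $\omega \in L^2(0,T;\mathbb{R}^r) \setminus L^\infty(0,T;\mathbb{R}^r)$ no sequence of continuous approximations converging in $L^2$ can stay uniformly bounded in $L^\infty$; this is precisely why Proposition~\ref{prop: obsEqContW} obtains boundedness of $\dot{\widehat{x}}_\omega$ only for the single fixed continuous $\omega$ at hand, with a bound depending on that $\omega$'s sup norm. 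The damage is local, however: squaring and integrating the same pointwise estimate, using $\Vert \widehat{x}_{\omega_k} \Vert_{L^\infty(0,T;\mathbb{R}^n)} \leq \hat{c}_2\delta_7$ and the bound $M_\mathcal{V}$ from Lemma~\ref{lem: EstHesInv}, gives directly $\Vert \dot{\widehat{x}}_{\omega_k} \Vert_{L^2(0,T;\mathbb{R}^n)} \leq c\left(1 + \Vert \omega_k \Vert_{L^2(0,T;\mathbb{R}^r)}\right) \leq c(1+\delta_7)$, uniformly in $k$. So your uniform $H^1$ bound --- and with it the compactness route --- survives once the detour through $L^\infty$ is deleted. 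The remaining limit passages (strong $L^2$ convergence of the drift via uniform convergence of the trajectories, and of the correction term via the splitting into a uniformly bounded, pointwise convergent matrix factor times an $L^2$-convergent vector factor) are sound and mirror the paper's estimates; for completeness you should also record, as the paper does, that $\widehat{x}_\omega(0) = \argmin_\xi \tfrac{1}{2}\Vert \xi \Vert^2 = 0$, so that the initial condition in \eqref{eq:ObsShif} is met.
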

\begin{remark}\label{rem: MainThm}
	Before we turn to the main result of this work, some remarks are in order.
	The assumptions of Theorem \ref{thm: MainThm} also underline the importance of avoiding  time-dependence of the constants throughout this work.
	If the constant $\delta_7$ was time-dependent with the property $\delta_7(t) \to 0, t \to 0$ the result would be considerably weaker. One would have to assume that for all $t \in (0,T]$ it holds
	\begin{equation*}
		\int_0^t \Vert \omega(s) \Vert^2 \,\mathrm{d}s < \delta_7(t)^2.
	\end{equation*}
	This means in particular that any continuous $\omega$ would need to satisfy $\omega(0) = 0$. Translating this to the original coordinates and setting used in \eqref{eq:state_intro} and \eqref{eq:output_intro} this would imply $ C \eta = - \mu(0) $, meaning that the modeled observation of the disturbance in the initial state and the observation error in time zero line up. It would also significantly increase the technical difficulties in the subsequent proof.
\end{remark}
\begin{proof}
	Let $\omega_k$ be a sequence from $C([0,T];\mathbb{R}^r)$ such that $\Vert \omega - \omega_k \Vert_{L^2(0,T;\mathbb{R}^r)} \to 0$ for $k \to 0$ and $\Vert \omega_k \Vert_{L^2(0,T;\mathbb{R}^r)} < \delta_7 $ for all $k$. Consider the sequence $(\widehat{x}_{\omega_k}) \subset H^1(0,T;\mathbb{R}^n)$. According to Proposition \ref{prop: WDargmin} for any fixed $t \in [0,T]$ the mapping $\omega \mapsto \widehat{x}_\omega(t)$ is continuous implying that $\widehat{x}_{\omega_k}$ converges to $\widehat{x}_\omega$ pointwise. For all $k \in \mathbb{N}$ and $t \in [0,T]$ it holds $ \Vert \widehat{x}_{\omega_k}(t) \Vert \leq \hat{c}_2 \delta_7$, hence the dominated convergence theorem implies that for all $\varphi \in C_0^\infty(0,T;\mathbb{R}^n)$ it holds 
	\begin{equation*}
		\int_0^T \left\langle \widehat{x}_\omega(t),\dot{\varphi}(t) \right\rangle\,\mathrm{d}t
		= \lim_{k \to \infty} \int_0^T \left\langle \widehat{x}_{\omega_k}(t),\dot{\varphi}(t) \right\rangle \,\mathrm{d}s
		= - \lim_{k \to \infty} \int_0^T \left\langle \dot{\widehat{x}}_{\omega_k}(t),\varphi(t) \right\rangle \,\mathrm{d}t,
	\end{equation*}
	where $\dot{\widehat{x}}_{\omega_k}$ is given by the right hand side of the observer equation. It remains to show that
	\begin{equation*}
		\begin{aligned}
			&\int_0^T \left\langle
			h(t,\widehat{x}_\omega(t)) + \alpha \nabla_{\xi\xi}^2\mathcal{V}(t,\widehat{x}_\omega(t),\omega)^{-1} C^\top (\omega(t) - C\widehat{x}_\omega(t))
			,\varphi(t) \right\rangle \,\mathrm{d}t\\
			&-\int_0^T \left\langle
			h(t,\widehat{x}_{\omega_k}(t)) + \alpha \nabla_{\xi\xi}^2\mathcal{V}(t,\widehat{x}_{\omega_k}(t),\omega_k)^{-1} C^\top (\omega_k(t) - C\widehat{x}_{\omega_k}(t))
			,\varphi(t) \right\rangle \,\mathrm{d}t
		\end{aligned}
	\end{equation*}
	converges to zero for $k \to \infty$. First consider
	\begin{equation*}
		\left\vert \int_0^T \left\langle h(t,\widehat{x}_{\omega}(t)) - h(t,\widehat{x}_{\omega_k}(t)), \varphi(t) \right\rangle \,\mathrm{d}t \right\vert
		\leq \Vert \varphi \Vert_{L^\infty(0,T;\mathbb{R}^n)} \int_0^T \Vert h(t,\widehat{x}_{\omega}(t)) - h(t,\widehat{x}_{\omega_k}(t)) \Vert \,\mathrm{d}t.
	\end{equation*}
	Note that for any $\xi \in \mathbb{R}^n$ and $t \in [0,T]$ it holds $h(t,\xi) \leq \left(\Vert A \Vert_2 + 2\Vert G \Vert_2 \Vert \Tilde{x} \Vert_{L^\infty(0,T;\mathbb{R}^n)}\right) \Vert \xi \Vert + \Vert G \Vert_2 \Vert \xi \Vert^2 $. Further $h$ is continuous in its second argument, hence the pointwise convergence of $\widehat{x}_{\omega_k}$ and dominated convergence imply convergence to zero.  Next consider
	\begin{equation*}
		\begin{aligned}
			&\left\vert \int_0^T \left\langle 
			\nabla_{\xi\xi}^2\mathcal{V}(t,\widehat{x}_{\omega}(t),\omega)^{-1}C^\top \omega(t)
			- \nabla_{\xi\xi}^2\mathcal{V}(t,\widehat{x}_{\omega_k}(t),\omega_k)^{-1}C^\top \omega_k(t)
			,\varphi(t) \right\rangle \,\mathrm{d}t \right\vert\\
			&\leq \Vert \varphi \Vert_{L^\infty(0,T;\mathbb{R}^n)}
			\int_0^T \Vert \nabla_{\xi\xi}^2\mathcal{V}(t,\widehat{x}_{\omega}(t),\omega)^{-1}C^\top \omega(t)
			- \nabla_{\xi\xi}^2\mathcal{V}(t,\widehat{x}_{\omega_k}(t),\omega_k)^{-1}C^\top \omega_k(t) \Vert \,\mathrm{d}t.
		\end{aligned}
	\end{equation*}
	The integral can be estimated from above by
	\begin{equation*}
		\begin{aligned}
			&\Vert C \Vert_2 \int_0^T \Vert \nabla_{\xi\xi}^2\mathcal{V}(t,\widehat{x}_\omega(t),\omega)^{-1} 
			- \nabla_{\xi\xi}^2\mathcal{V}(t,\widehat{x}_{\omega_k}(t),\omega_k)^{-1} \Vert_2 \Vert \omega(t) \Vert \,\mathrm{d}t\\
			&+ \Vert C \Vert_2 \int_0^T \Vert \nabla_{\xi\xi}^2\mathcal{V}(t,\widehat{x}_{\omega_k}(t),\omega_k)^{-1} \Vert_2 \Vert \omega(t) - \omega_k(t) \Vert \,\mathrm{d}t.
		\end{aligned}
	\end{equation*}
	With the uniform bound of $\Vert \nabla_{\xi\xi}^2\mathcal{V}(t,\widehat{x}_{\omega_k}(t),\omega_k)^{-1} \Vert_2$ from Lemma \ref{lem: EstHesInv} and the convergence of $\omega_k$ in $L^2$ the second summand converges to zero. Lemma \ref{lem: EstHesInv} further implies that for all $k \in \mathbb{N}$ and $t \in [0,T]$ it holds
	\begin{equation*}
		\Vert \nabla_{\xi\xi}^2\mathcal{V}(t,\widehat{x}_\omega(t),\omega)^{-1} 
		- \nabla_{\xi\xi}^2\mathcal{V}(t,\widehat{x}_{\omega_k}(t),\omega_k)^{-1} \Vert_2
		\leq 2M_\mathcal{V}.
	\end{equation*}
	Since $\mathcal{V}$ is $C^\infty$ in $\xi$ and $\omega$, the dominated convergence theorem yields that the first summand also converges to zero. The convergence of the term involving $-C \widehat{x}_\omega$ instead of $\omega$ can be shown analogously. Hence it is shown that the weak derivative of $\widehat{x}_\omega$ is given by 
	\begin{equation*}
		h(t,\widehat{x}_\omega(t)) + \alpha \nabla_{\xi\xi}^2\mathcal{V}(t,\widehat{x}_\omega(t),\omega)C^\top(\omega(t)-C\widehat{x}_\omega(t)) \in L^2(0,T;\mathbb{R}^n).
	\end{equation*}
	By definition it holds $\widehat{x}_\omega(0) = 0$, therefore $\widehat{x}_\omega \in H^1(0,T;\mathbb{R}^n)$ is a weak solution of \eqref{eq:ObsShif}.    
\end{proof} 

\section{Conclusion}
In this work we were able to show that the formal derivation of the observer equation associated with the Mortensen observer can be done in a rigorous manner, assuming that the initial model is close enough to the actual system from which the data is measured. These results motivate and justify two approaches to approximate the non-linear observer numerically. On the one hand the observer trajectory can be obtained by a minimization of the value function in every time point, on the other hand it is given as a solution of the observer equation. 
%%%
%END%
%%%

\section*{Acknowledgement}

We thank K.~Kunisch (KFU Graz) for many helpful comments and several discussions on earlier versions of this manuscript.
We thank L.~Pfeiffer (INRIA Paris-Saclay) for many helpful suggestions and pointers on the subject of sensitivity analysis.
We gratefully acknowledge funding and support from the Deutsche Forschungsgemeinschaft via the project 504768428.

\appendix
\section{Linear quadratic control problem}\label{Appendix A}
This section of the Appendix is concerned with a general linear quadratic control problem. Its solution is characterized as the solution of a system of equations. Further the optimizing triple of state, control and adjoint is estimated by the data. The control problem is given as
\begin{equation}\label{LQCP}
	\min\limits_{(x,v) \in V_t \times L^2(0,t;\mathbb{R}^m)} J(x,v), 
	\text{ subject to }
	e(x,v) = 0,
\end{equation}
where the cost functional $J \colon V_t \times L^2(0,t;\mathbb{R}^m) \rightarrow \mathbb{R}$ and the constraint $e \colon V_t \times L^2(0,t;\mathbb{R}^m) \rightarrow L^2(0,t;\mathbb{R}^n) \times \mathbb{R}^n$ are defined as
\begin{equation}
	\begin{aligned}
		J(x,v) 
		&= \frac{1}{2} \Vert x(0) - b \Vert^2 
		+ \frac{1}{2} \Vert v \Vert_{L^2(0,t;\mathbb{R}^m)}^2 
		+ \frac{\alpha}{2} \Vert Cx - \mu \Vert_{L^2(0,t;\mathbb{R}^r)}^2 \\
		&+ \int_0^t \left\langle G(x \otimes x) , \rho \right\rangle + \left\langle l_1 , x \right\rangle + \left\langle l_2 , v \right\rangle \,\mathrm{d}s,\\
		e(x,v)
		&= \left( \dot{x} -Ax -G(\breve{x}_t \otimes I_n) x - G(I_n \otimes \breve{x}_t) x - Fv -f, x(t) - a \right).
	\end{aligned}
\end{equation}
Here $t \in (0,T]$ and the data consisting of $a,b \in \mathbb{R}^n$, $\mu \in L^2(0,t;\mathbb{R}^r)$, $f \in L^2(0,t;\mathbb{R}^n)$ and $l_1 \in L^2(0,t;\mathbb{R}^n)$, $l_2 \in L^2(0,t;\mathbb{R}^m)$ is given.
\begin{proposition}\label{Proposition: Appendix A}
	Assume that $\breve{x}_s \in V_s$ is well-defined for any $s \in (0,T]$ and that $\Vert \breve{x}_s \Vert_{L^\infty(0,s;\mathbb{R}^n)} \leq \kappa$ for some time-independent constant $\kappa> 0$ and all $s \in (0,T]$ . Further assume that $\rho \in V_t$ satisfies $\Vert \rho \Vert_{L^\infty(0,t;\mathbb{R}^n)} \leq \beta \coloneqq (4 \bar{c}(\kappa)^2 \Vert G \Vert_2)^{-1}$. Then the system
	\begin{equation}\label{Appendix KKT system}
		\begin{aligned}
			\dot{x} &= Ax + G(\breve{x}_t \otimes I_n) x + G(I_n \otimes \breve{x}_t)x + Fv + f,\\
			x(t) &= a,\\
			\dot{q} &= -A^\top q - (\breve{x}_t^\top \otimes I_n) G^\top q - (I_n \otimes \breve{x}_t^\top)G^\top q
			- \alpha C^\top (Cx - \mu)\\
			&- (x^\top \otimes I_n)G^\top \rho - (I_n \otimes x^\top)G^\top \rho - l_1,\\
			q(0) &= b - x(0),\\
			v + F^\top q + l_2 &= 0,
		\end{aligned}
	\end{equation}
	admits exactly one solution $(\bar{x},\bar{v},\bar{q})$. It is given by the minimizing control of \eqref{LQCP} and its corresponding state and adjoint state. Further there exists a constant $\overline{M} > 0$ independent of $t$, $a$, $b$, $\mu$, $f$, $l_1$, $l_2$, $\rho$ und $\breve{x}_t$ such that
	\begin{equation*}
		\begin{aligned}
			&\max \left(
			\Vert \bar{v} \Vert_{L^2(0,t;\mathbb{R}^m)},
			\Vert \bar{x} \Vert_{V_t},
			\Vert \bar{q} \Vert_{V_t}
			\right)\\ 
			&\leq \overline{M} \left( \Vert a \Vert + \Vert b \Vert + \Vert \mu \Vert_{L^2(0,t;\mathbb{R}^m)} + \Vert f \Vert_{L^2(0,t;\mathbb{R}^n)} + \Vert l_1 \Vert_{L^2(0,t;\mathbb{R}^n)} + \Vert l_2 \Vert_{L^2(0,t;\mathbb{R}^m)} \right).
		\end{aligned}
	\end{equation*}
\end{proposition}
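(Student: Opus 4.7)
The plan is to identify \eqref{Appendix KKT system} as the first-order optimality system of the convex control problem \eqref{LQCP}, establish existence and uniqueness of a minimizer by the direct method in the calculus of variations, and then deduce the combined estimate by testing the state and adjoint equations against their solutions.

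First I would eliminate the state. The state equation is linear in $(x,v)$ with time-dependent coefficient $B(s) = A + G(\breve{x}_t(s) \otimes I_n) + G(I_n \otimes \breve{x}_t(s))$, whose spectral norm is bounded by $\kappa_0 := \|A\|_2 + 2\|G\|_2\kappa$ uniformly in $s$ and $t$. A time reversal together with Lemma \ref{Lemma: Estimate linear equation} then produces a unique $x \in V_t$ for every triple $(a,f,v)$, and in particular an affine control-to-state map $v \mapsto x(v) =: x_0 + Lv$ with $L \in L(L^2(0,t;\mathbb{R}^m),V_t)$ of $t$-independent operator norm. Substituting into the cost gives a quadratic reduced functional $\tilde J(v) := J(x(v),v)$ whose Hessian in direction $w$ reads
\[
\tilde J''(v)(w,w) = \|(Lw)(0)\|^2 + \|w\|_{L^2}^2 + \alpha\|CLw\|_{L^2}^2 + 2\int_0^t \langle G(Lw\otimes Lw),\rho\rangle\,\mathrm{d}s.
\]
The only indefinite contribution is the last integral, bounded by $2\|G\|_2\|\rho\|_{L^\infty}\|Lw\|_{L^2}^2$; the specific threshold $\beta = (4\bar c(\kappa_0)^2\|G\|_2)^{-1}$ on $\|\rho\|_{L^\infty}$ is tailored to absorb this term into the positive part, leaving $\tilde J''(v)(w,w) \geq c\|w\|_{L^2}^2$ with $c>0$ independent of $t$.

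Strict convexity and coercivity of $\tilde J$ then deliver a unique minimizer $\bar v$. Standard Lagrange-multiplier theory (see, e.g., \cite[Proposition 1, Section 4.14]{Zei95AMS109}) produces an adjoint state $\bar q \in V_t$ such that $(\bar x,\bar v,\bar q)$ satisfies \eqref{Appendix KKT system}. Conversely, given any solution of \eqref{Appendix KKT system}, eliminating $\bar q$ through the algebraic control equation shows that $\bar v$ is stationary for $\tilde J$, and strict convexity forces it to coincide with the minimizer; this simultaneously yields uniqueness of the KKT triple. For the estimate, I would apply Lemma \ref{Lemma: Estimate linear equation} three times: to the state equation, to the time-reversed adjoint equation, and to the algebraic relation $\bar v = -F^\top \bar q - l_2$. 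This produces three coupled inequalities between $\|\bar x\|_{V_t}$, $\|\bar q\|_{V_t}$ and $\|\bar v\|_{L^2}$; substituting them into one another and absorbing the cross-terms, an absorption again made possible by the smallness built into $\beta$, yields the single bound of the claim with a $t$-independent constant $\overline M$.

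The main obstacle throughout is the requirement that \emph{every} constant, namely the coercivity constant of $\tilde J''$, the three invocations of Lemma \ref{Lemma: Estimate linear equation}, and the absorption factor in the final step, remain independent of $t\in(0,T]$. This is precisely what the particular form $\beta = (4\bar c(\kappa)^2\|G\|_2)^{-1}$ and the choice to work throughout with the composite $V_t$-norm are designed to ensure, since the embedding $H^1 \hookrightarrow L^\infty$ degenerates as $t \to 0^+$ and would otherwise reintroduce a $t$-dependence into the indefinite bilinear term.
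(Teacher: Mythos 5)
Your first half---reduction to the quadratic functional $\tilde J$, coercivity of the reduced Hessian with the $\rho$-term absorbed thanks to $\beta$, existence and uniqueness by the direct method, and the equivalence between \eqref{Appendix KKT system} and stationarity of the strictly convex $\tilde J$---is essentially the paper's argument, which phrases the same fact as coercivity and convexity of $J$ along the kernel of the constraint, guaranteed by the bound on $\Vert \rho \Vert_{L^\infty(0,t;\mathbb{R}^n)}$. (One bookkeeping remark: with $\Vert Lw \Vert_{V_t} \leq \bar{c}\Vert F \Vert_2 \Vert w \Vert_{L^2}$ your absorbed term is $\tfrac{1}{2}\Vert F \Vert_2^2 \Vert w \Vert_{L^2}^2$ rather than $\tfrac{1}{2}\Vert w \Vert_{L^2}^2$, so strictly speaking the threshold should also track $\Vert F \Vert_2$; the paper's sketch is silent on this point, so this is a shared imprecision rather than an error specific to your write-up.)

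The final estimate, however, contains a genuine gap. You propose to apply Lemma \ref{Lemma: Estimate linear equation} to the state equation, to the time-reversed adjoint equation and to $\bar{v} = -F^\top \bar{q} - l_2$, and to close the resulting circular system of inequalities by absorption, ``made possible by the smallness built into $\beta$''. But $\beta$ only makes the $\rho$-dependent source terms in the adjoint equation small; it does nothing to the genuinely large coupling terms: the adjoint equation feeds $\alpha C^\top C \bar{x}$ into $\bar{q}$, and the state equation feeds $FF^\top \bar{q}$ (through $\bar{v}$) back into $\bar{x}$. The loop gain of your three chained inequalities is of order $\bar{c}^2 \Vert F \Vert_2^2 \left( 1 + \alpha \Vert C \Vert_2^2 \right)$, which is not assumed, and in general fails, to be below one, so the absorption does not close---this reflects the standard fact that a coupled forward--backward optimality system is not amenable to a naive contraction-type bootstrap on a fixed horizon, even though it is uniquely solvable by convexity. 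The paper avoids the circularity by exploiting optimality instead: one bounds $J(\bar{x},\bar{v}) \leq J(x_{0},0)$, where $x_{0}$ is the trajectory associated with the zero control (itself estimated via Lemma \ref{Lemma: Estimate linear equation} in terms of $a$ and $f$), uses the coercivity you already established to extract $\Vert \bar{v} \Vert_{L^2(0,t;\mathbb{R}^m)} \leq \overline{M}(\text{data})$ from this one-sided cost comparison, and only then propagates \emph{sequentially}: first to $\bar{x}$ through the state equation, then to $\bar{q}$ through the adjoint equation, where the term $\alpha C^\top C \bar{x}$ is now harmless because $\Vert \bar{x} \Vert_{V_t}$ has already been bounded. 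You should replace your coupled-absorption step by this sequential, optimality-based argument; the rest of your proof then goes through.
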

\begin{proof}
	Since the control problem is closely related to the well-studied linear-quadratic control problem, the proof will only be sketched. To see unique solvability of \eqref{LQCP} it needs to be shown that the cost functional is coercive and convex along the kernel of the constraint. This condition is clearly fulfilled for all linear and non-negative quadratic terms. The upper bound on $\Vert \rho \Vert_{L^\infty(0,t;\mathbb{R}^n)}$ ensures that both properties hold for the entire cost functional. The necessary and sufficient optimality condition for $(x,v,q)$ to be the minimizer is then given by \eqref{Appendix KKT system}.
	
	The norm of the control $\Bar{v}$ can then be estimated from above in terms of the cost functional, which in turn is estimated from above by the cost functional evaluated in the zero control and its corresponding trajectory. These estimates can be carried over to the trajectory and adjoint state associated with $\Bar{v}$.
\end{proof} 
\section{Technical auxiliary results}\label{Appendix C}
This section collects auxiliary results and their technical proofs required in this work.
\begin{lemma}\label{Lemma: Bounds for derivatives}
	Let $t \in (0,T]$ and $(\xi, \omega) \in \mathcal{U}_{\delta_4}(0) \subset \mathbb{R}^n \times L^2(0,t;\mathbb{R}^r)$. Let $\mu \in L^2(0,t;\mathbb{R}^r)$ and $z_1,z_2,z_3,z_4 \in \mathbb{R}^n$ and for $i = 1,...,4$ denote $z^i = (z_1,...,z_i)$. Then there exist constants $\overline{M}_1, \overline{M}_2, \overline{M}_3, \overline{M}_4>0$ and $\breve{M}_0, \breve{M}_1,\breve{M}_2 > 0$ independent of $t$, $\xi$, $\omega$, $\mu$, $z_1$, $z_2$, $z_3$ and $z_4$ such that
	\begin{equation*}
		\max \left(
		\Vert D^i_{\xi^i} \mathcal{X}_t(\xi,\omega)z^i \Vert_{V_t},
		\Vert D^i_{\xi^i} \mathcal{P}_t(\xi,\omega)z^i \Vert_{V_t},
		\Vert D^i_{\xi^i} \mathcal{U}_t(\xi,\omega)z^i \Vert_{L^2(0,t;\mathbb{R}^m)} 
		\right)
		\leq \overline{M}_i \prod_{k=1}^i\Vert z_k \Vert
	\end{equation*}
	holds for $i=1,2,3,4$. Further it holds 
	\begin{equation*}
		\max \left(
		\Vert D_{\omega} \mathcal{X}_t(\xi,\omega)\mu \Vert_{V_t},
		\Vert D_{\omega} \mathcal{P}_t(\xi,\omega)\mu \Vert_{V_t},
		\Vert D_{\omega} \mathcal{U}_t(\xi,\omega)\mu \Vert_{L^2(0,t;\mathbb{R}^m)} 
		\right)
		\leq \breve{M}_0 \Vert \mu \Vert_{L^2(0,t;\mathbb{R}^r)},
	\end{equation*}
	and for $i=1,2$ it holds
	\begin{equation*}
		\begin{aligned}
			\max & \left(
			\Vert D^{i+1}_{\omega \xi^i} \mathcal{X}_t(\xi,\omega)(\mu,z^i) \Vert_{V_t},
			\Vert D^{i+1}_{\omega \xi^i} \mathcal{P}_t(\xi,\omega)(\mu,z^i) \Vert_{V_t},
			\Vert D^{i+1}_{\omega \xi^i} \mathcal{U}_t(\xi,\omega)(\mu,z^i) \Vert_{L^2(0,t;\mathbb{R}^m)} 
			\right)\\
			&\leq \breve{M}_i \Vert \mu \Vert_{L^2(0,t;\mathbb{R}^r)} \prod_{k=1}^i\Vert z_k \Vert.
		\end{aligned}
	\end{equation*}
\end{lemma}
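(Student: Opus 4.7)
The plan is to establish all bounds simultaneously by induction on the total differentiation order, exploiting that the triples of derivatives of $\mathcal{X}_t$, $\mathcal{U}_t$, $\mathcal{P}_t$ satisfy linear systems obtained by differentiating the defining identity
\begin{equation*}
\Phi_t(\mathcal{X}_t(\xi,\omega), \mathcal{U}_t(\xi,\omega), \mathcal{P}_t(\xi,\omega), \omega) = (\omega, 0, \xi, 0, 0, 0),
\end{equation*}
which holds on $\mathcal{U}_{\delta_4}(0)$ by Proposition \ref{Proposition: Mappings Data to Minimizer}, and then applying the time-uniform estimate of Proposition \ref{Proposition: Appendix A}. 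Throughout I would keep $(\xi,\omega) \in \mathcal{U}_{\delta_4}(0)$ fixed, write $h = (\mathcal{X}_t,\mathcal{U}_t,\mathcal{P}_t,\omega)$, and use the expression for $D\Phi_t$ computed in the proof of Lemma \ref{lem: DPhi}. By Proposition \ref{Proposition: Existence} and Proposition \ref{Proposition: FirstOrderOptimalityCondition}, both $\|\mathcal{X}_t - \tilde x\|_{V_t}$ and $\|\mathcal{P}_t\|_{V_t}$ are bounded by $\max(\|\xi\|,\|\omega\|_{L^2})$ times a $t$-independent constant, so after a (possibly further) shrinking of $\delta_4$ the hypotheses $\|\breve x_t\|_{L^\infty} \le \kappa$ and $\|\rho\|_{L^\infty} \le \beta$ of Proposition \ref{Proposition: Appendix A} are satisfied uniformly in $t$, with $\breve x_t := \mathcal{X}_t$ and $\rho := \mathcal{P}_t$.

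For the base case ($i=1$), differentiating the identity in $\xi$ along $z_1$ gives
\begin{equation*}
D\Phi_t(h)\bigl(D_\xi\mathcal{X}_t\,z_1,\, D_\xi\mathcal{U}_t\,z_1,\, D_\xi\mathcal{P}_t\,z_1,\, 0\bigr) = (0,0,z_1,0,0,0).
\end{equation*}
Inspecting row by row against \eqref{Phi Definition}, this is precisely the KKT system \eqref{Appendix KKT system} with data $a = z_1$, $b=0$, $\mu=0$, $f=0$, $l_1=0$, $l_2=0$. Proposition \ref{Proposition: Appendix A} therefore yields $\overline{M}_1$ with $\overline{M}_1 \le \overline{M}\|z_1\|$. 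The same argument applied to a direction $(0,\dots,0,\mu)$ produces the data $\mu_{\mathrm{LQ}} = \mu$ with all other data zero, giving $\breve M_0$.

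For the inductive step ($i\ge 2$), differentiate the identity $i$ times. Since the only nonlinearities in $\Phi_t$ are the bilinear terms $G(x\otimes x)$ (row 2) and $((x^\top\!\otimes I_n) + (I_n\!\otimes x^\top))G^\top p$ plus $\alpha C^\top C x$ (row 4), the Faà di Bruno expansion terminates: one obtains
\begin{equation*}
D\Phi_t(h)\bigl(D^i_{\xi^i}\mathcal{X}_t\,z^i,\, D^i_{\xi^i}\mathcal{U}_t\,z^i,\, D^i_{\xi^i}\mathcal{P}_t\,z^i,\, 0\bigr) = R_i(z^i),
\end{equation*}
where $R_i$ is a finite sum of terms each of the form $G(D^j_{\xi^j}\mathcal{X}_t z^{(j)} \otimes D^{i-j}_{\xi^{i-j}}\mathcal{X}_t z^{(i-j)})$ or analogous cross-products involving $\mathcal{P}_t$, with $1 \le j \le i-1$. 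This $R_i$ has no $a$- or $b$-component; only the $f$- and $l_1$-slots of the Appendix-A data are populated. Using the inductive hypothesis together with the product estimate $\|x\cdot y\|_{L^2(0,t)} \le \|x\|_{V_t}\|y\|_{V_t}$ (which follows from the $L^\infty$-component of the $V_t$-norm and is $t$-uniform), we get $\|R_i\|_{Y_t} \lesssim \prod_{k=1}^i \|z_k\|$ with a constant independent of $t$. A final application of Proposition \ref{Proposition: Appendix A} produces $\overline{M}_i$. The mixed derivatives $D^{i+1}_{\omega\xi^i}$ are treated identically: one additional differentiation in $\omega$ along $\mu$ contributes a forcing term supported in the $\mu_{\mathrm{LQ}}$-slot of the LQ data, carrying a factor $\|\mu\|_{L^2(0,t;\mathbb{R}^r)}$ through to the estimate.

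The main obstacle is precisely what motivates the $V_t$ setup: the nonlinear contributions to $R_i$ must be estimated as $L^2$-in-time products of lower-order derivatives, and passing through the embedding $H^1(0,t;\mathbb{R}^n) \hookrightarrow L^\infty$ would introduce the factor $\sinh(t)^{-1/2}$ flagged in the remark after Proposition \ref{Proposition:ErrorSolvability}, which blows up as $t\to 0$ and would destroy $t$-independence. Using the $V_t$-norm throughout (so that sup-norm control is built in) and invoking the $t$-uniform constant $\overline{M}$ of Proposition \ref{Proposition: Appendix A} at each step of the induction is what keeps all constants $\overline{M}_i$ and $\breve M_i$ independent of $t$.
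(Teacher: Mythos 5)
Your proposal is correct and takes essentially the same route as the paper's proof: differentiate the identity $\Phi_t(\mathcal{X}_t,\mathcal{U}_t,\mathcal{P}_t,\omega)=(\omega,0,\xi,0,0,0)$, recognize each derivative triple as the solution of the KKT system \eqref{Appendix KKT system} with $\breve{x}_t=\mathcal{X}_t(\xi,\omega)$, $\rho=\mathcal{P}_t(\xi,\omega)$ and products of lower-order derivatives placed in the $f$- and $l_1$-slots, then invoke the $t$-uniform estimate of Proposition \ref{Proposition: Appendix A} (the paper writes out $i=1,2$ explicitly and declares the rest analogous, which your induction formalizes). One cosmetic slip that does not affect validity: for the mixed derivatives $D^{i+1}_{\omega\xi^i}$ with $i\geq 1$ the new forcing also lands in the $f$- and $l_1$-slots rather than the $\mu_{\mathrm{LQ}}$-slot (the direct $\omega$-dependence of $\Phi_t$ is linear, so the $\mu$-slot is populated only for the pure first $\omega$-derivative), but each such product contains exactly one $\omega$-derivative factor bounded by $\breve{M}_0\Vert\mu\Vert_{L^2(0,t;\mathbb{R}^r)}$, so the claimed factor still carries through.
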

\begin{proof}
	First note that it holds $\Vert \mathcal{X}_t(\xi,\omega) \Vert_{L^\infty(0,t;\mathbb{R}^n)} \leq M_1 \delta_4 + \Vert \tilde{x} \Vert_{L^\infty(0,T;\mathbb{R}^n)}$ and \\ $\Vert \mathcal{P}_t(\xi,\omega) \Vert_{L^\infty(0,t;\mathbb{R}^n)} \leq M_2 \delta_4 \leq \beta$. To enforce the second estimate a reduction of $\delta_4$ might be necessary. For the sake of a better overview we abstain from introducing a new variable to reflect this. By construction it holds
	\begin{equation}\label{Phi equation}
		\Phi_t(\mathcal{X}_t(\xi,\omega), \mathcal{U}_t(\xi,\omega), \mathcal{P}_t(\xi,\omega),\omega) = (\omega,0,\xi,0,0,0).
	\end{equation}
	Since $\Phi_t$ is $C^\infty$ on $X_t$ and $\mathcal{X}_t$, $\mathcal{U}_t$ and $\mathcal{P}_t$ are $C^\infty$ on $\mathcal{U}_{\delta_4}(0)$, it follows
	\begin{equation*}
		D_\xi \left( \Phi_t(\mathcal{X}_t(\xi,\omega), \mathcal{U}_t(\xi,\omega), \mathcal{P}_t(\xi,\omega),\omega) \right) z_1 = (0,0,z_1,0,0,0).
	\end{equation*}
	With the chain rule one gets that the triple $(D_\xi \mathcal{X}_t(\xi,\omega)z_1, D_\xi \mathcal{U}_t(\xi,\omega)z_1, D_\xi \mathcal{P}_t(\xi,\omega)z_1)$ solves \eqref{Appendix KKT system} with $\breve{x}_t = \mathcal{X}_t(\xi,\omega)$, $\rho = \mathcal{P}_t(\xi,\omega)$, $f=0$, $a = z_1$, $\mu = 0$, $l_1 = 0$, $b = 0$, $l_2 = 0$. Proposition \ref{Proposition: Appendix A} yields the estimate for the first order $\xi$-derivatives. Taking two $\xi$-derivatives of \eqref{Phi equation} shows that $(D_{\xi^2} \mathcal{X}_t(\xi,\omega)z^2, D_{\xi^2} \mathcal{U}_t(\xi,\omega)z^2, D_{\xi^2} \mathcal{P}_t(\xi,\omega)z^2)$ solves \eqref{Appendix KKT system} with
	\begin{equation*}
		\begin{aligned}
			\breve{x}_t &= \mathcal{X}_t(\xi,\omega),~
			\rho = \mathcal{P}_t(\xi,\omega),~
			a = 0,~ \mu = 0,~
			b = 0,~ l_2 = 0,\\
			f &= G(D_\xi \mathcal{X}_t(\xi,\omega)z_1 \otimes D_\xi \mathcal{X}_t(\xi,\omega)z_2) 
			+ G(D_\xi \mathcal{X}_t(\xi,\omega) z_2 \otimes D_\xi \mathcal{X}_t(\xi,\omega) z_1),\\
			l_1 &= (D_\xi \mathcal{X}_t(\xi,\omega)z_1^\top \otimes I_n)G^\top \mathcal{P}_t(\xi,\omega)z_2 
			+ (I_n \otimes D_\xi \mathcal{X}_t(\xi,\omega)z_1^\top)G^\top \mathcal{P}_t(\xi,\omega)z_2\\ 
			&+ (D_\xi \mathcal{X}_t(\xi,\omega)z_2^\top \otimes I_n)G^\top \mathcal{P}_t(\xi,\omega)z_1 
			+ (I_n \otimes D_\xi \mathcal{X}_t(\xi,\omega)z_2^\top)G^\top \mathcal{P}_t(\xi,\omega)z_1.
		\end{aligned}
	\end{equation*}
	Again the desired estimates are obtained with Proposition \ref{Proposition: Appendix A}.
	The remaining estimates are shown analogously.
\end{proof}
\begin{lemma}\label{Lemma: V differentiable Auxiliary1}
	Let $(\xi,\omega) \in \mathcal{U}_{\tfrac{1}{2}\delta_4}(0) \subset \mathbb{R}^n \times L^2(0,T;\mathbb{R}^r)$. Then for all $t \in (0,T)$ it holds
	\begin{equation*}
		\frac{1}{\tau} \int_t^{t+\tau} \left\Vert \mathcal{X}_{t+\tau}(\xi,\omega)(s) - \mathcal{X}_t(\xi,\omega)(t) \right\Vert \,\mathrm{d}s \to 0,~~~~~\tau \searrow 0.
	\end{equation*}
\end{lemma}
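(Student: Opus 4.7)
The plan is to split the integrand via the triangle inequality into a piece captured by the $C^1$--regularity of $\mathcal{X}_{t+\tau}(\xi,\omega)$ and a piece depending only on the nominal trajectory. Specifically, I would write, for $s \in [t,t+\tau]$,
\begin{equation*}
\bigl\|\mathcal{X}_{t+\tau}(\xi,\omega)(s) - \mathcal{X}_t(\xi,\omega)(t)\bigr\|
\le
\bigl\|\mathcal{X}_{t+\tau}(\xi,\omega)(s) - \mathcal{X}_{t+\tau}(\xi,\omega)(t+\tau)\bigr\|
+
\bigl\|\mathcal{X}_{t+\tau}(\xi,\omega)(t+\tau) - \mathcal{X}_t(\xi,\omega)(t)\bigr\|.
\end{equation*}
The second term is identified via the final-value condition of \eqref{StateEquationExplicit}: since $\mathcal{X}_{t+\tau}(\xi,\omega)(t+\tau) = \tilde{x}(t+\tau) + \xi$ and $\mathcal{X}_t(\xi,\omega)(t) = \tilde{x}(t) + \xi$, it equals $\|\tilde{x}(t+\tau) - \tilde{x}(t)\|$, which tends to zero by continuity of $\tilde{x}\in V_T \hookrightarrow C([0,T];\mathbb{R}^n)$.

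For the first term, the optimal control $\mathcal{U}_{t+\tau}(\xi,\omega) = -F^\top \mathcal{P}_{t+\tau}(\xi,\omega)$ is continuous by Corollary \ref{Corollary: Feedback Representation Optimal Control}, and hence $\mathcal{X}_{t+\tau}(\xi,\omega) \in C^1([0,t+\tau];\mathbb{R}^n)$. The fundamental theorem of calculus then yields
\begin{equation*}
\bigl\|\mathcal{X}_{t+\tau}(\xi,\omega)(s) - \mathcal{X}_{t+\tau}(\xi,\omega)(t+\tau)\bigr\|
\le (t+\tau - s)\,\bigl\|\dot{\mathcal{X}}_{t+\tau}(\xi,\omega)\bigr\|_{L^\infty(0,t+\tau;\mathbb{R}^n)}.
\end{equation*}
Exactly as in the proof of Proposition \ref{Proposition: Continuity of the Value Function}, I would bound $\|\dot{\mathcal{X}}_{t+\tau}(\xi,\omega)\|_{L^\infty}$ uniformly in $\tau$ (for $\tau$ small enough that $t+\tau<T$) using the state equation together with the time-independent bounds on $\mathcal{X}_{t+\tau}$, $\mathcal{U}_{t+\tau}$, $\mathcal{P}_{t+\tau}$ from Propositions \ref{Proposition: Existence} and \ref{Proposition: FirstOrderOptimalityCondition}; denote this bound by $c>0$.

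Combining the two estimates and integrating, using $\int_t^{t+\tau}(t+\tau-s)\,\mathrm{d}s = \tau^2/2$, gives
\begin{equation*}
\frac{1}{\tau}\int_t^{t+\tau}\bigl\|\mathcal{X}_{t+\tau}(\xi,\omega)(s) - \mathcal{X}_t(\xi,\omega)(t)\bigr\|\,\mathrm{d}s
\le
\frac{\tau}{2}\,c + \bigl\|\tilde{x}(t+\tau)-\tilde{x}(t)\bigr\|,
\end{equation*}
and both terms on the right vanish as $\tau\searrow 0$. There is no real obstacle here; the only point requiring care is the uniform-in-$\tau$ bound on $\|\dot{\mathcal{X}}_{t+\tau}(\xi,\omega)\|_{L^\infty}$, which is precisely the kind of time-independence carefully set up throughout the paper and is obtained by reusing the computation already carried out in Proposition \ref{Proposition: Continuity of the Value Function}.
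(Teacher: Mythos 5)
Your proof is correct, but it takes a genuinely different route from the paper's. The paper proves this lemma with an energy argument: it reverses time via $\eta(z) = \mathcal{X}_{t+\tau}(\xi,\omega)(t+\tau-z)$, tests the resulting ODE with $\eta(z)-\mathcal{X}_t(\xi,\omega)(t)$, and integrates, arriving at the pointwise bound $\Vert \mathcal{X}_{t+\tau}(\xi,\omega)(s)-\mathcal{X}_t(\xi,\omega)(t)\Vert \leq \sqrt{2}\,\Vert \tilde{x}(t+\tau)-\tilde{x}(t)\Vert + 2\sqrt{\tau}\,c$; the terminal-condition identification $\eta(0)-\mathcal{X}_t(\xi,\omega)(t) = \tilde{x}(t+\tau)-\tilde{x}(t)$ there plays exactly the role of the second term in your triangle inequality. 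You instead split off $\mathcal{X}_{t+\tau}(\xi,\omega)(t+\tau)$ and control the first piece by the fundamental theorem of calculus together with a uniform-in-$\tau$ bound on $\Vert \dot{\mathcal{X}}_{t+\tau}(\xi,\omega)\Vert_{L^\infty(0,t+\tau;\mathbb{R}^n)}$ --- a computation the paper itself carries out in the proof of Proposition \ref{Proposition: Continuity of the Value Function} (see \eqref{instantoCall} and the lines following it), so all ingredients are legitimately available and, crucially, time-independent. Your version is more elementary and even yields the slightly sharper rate $O(\tau)$ in place of the paper's $O(\sqrt{\tau})$, in both cases up to the modulus of continuity of $\tilde{x}$; it also correctly avoids any continuity assumption on $\omega$, consistent with the lemma's hypotheses. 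What the paper's energy method buys is robustness: it needs the forcing terms only in integrated form, so the identical template transfers verbatim to the adjoint in Lemma \ref{Lemma: V differentiable Auxiliary2}, where an FTC argument would require an $L^\infty$ bound on $\dot{\mathcal{P}}_{t+\tau}(\xi,\omega)$ and hence pointwise control of $\omega$ (available there only because $\omega$ is assumed continuous). One small citation slip: the continuity of the optimal control follows from the optimality relation \eqref{OptimalityConditionFormula} in Proposition \ref{Proposition: FirstOrderOptimalityCondition} together with $\mathcal{P}_{t+\tau}(\xi,\omega)\in V_{t+\tau}\hookrightarrow C([0,t+\tau];\mathbb{R}^n)$, not from Corollary \ref{Corollary: Feedback Representation Optimal Control}, which only concerns the evaluation at the final time.
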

\begin{proof}
	Let $t \in (0,T)$ be fixed and let $\tau > 0$ be small enough such that $t+\tau < T$. Define $\eta(z) = \mathcal{X}_{t+\tau}(\xi,\omega)(t+\tau - z)$. Then 
	\begin{equation}\label{TestEta}
		\dot{\eta}(z) = -A \eta(z) - G(\eta(z) \otimes \eta(z)) - F \mathcal{U}_{t+\tau}(\xi,\omega)(t+\tau-z).
	\end{equation}
	Note that for all $z \in (0,t+\tau-s)$ it holds 
	\begin{equation*}
		\langle \dot{\eta}(z),\eta(z) - \mathcal{X}_t(\xi,\omega)(t) \rangle
		= \frac{\mathrm{d}}{\mathrm{d}z} \frac{1}{2} \Vert \eta(z) - \mathcal{X}_t(\xi,\omega)(t) \Vert^2.
	\end{equation*}
	Hence testing \eqref{TestEta} with $\eta(z) - \mathcal{X}_t(\xi,\omega)(t)$ and integrating over $(0,t+\tau-s)$ for some fixed $s \in [t,t+\tau)$ yields
	\begin{equation*}
		\begin{aligned}
			&\frac{1}{2} \left\Vert \mathcal{X}_{t+\tau}(\xi,\omega)(s) - \mathcal{X}_t(\xi,\omega)(t) \right\Vert^2
			= \frac{1}{2} \left\Vert \eta(t+\tau-s) - \mathcal{X}_t(\xi,\omega)(t) \right\Vert^2\\
			&\leq \frac{1}{2} \left\Vert \eta(0) - \mathcal{X}_t(\xi,\omega)(t) \right\Vert^2 \\
			&+ \int_0^{t+\tau - s} \left( \Vert A \Vert_2 \Vert \eta \Vert + \Vert G \Vert_2 \Vert \eta \Vert^2 + \Vert F \Vert_2^2 \Vert \mathcal{P}_{t+\tau}(\xi,\omega) \Vert \right) \left( \Vert \eta \Vert + \Vert \xi \Vert + \Vert \Tilde{x}(t) \Vert \right) \,\mathrm{d}s\\
			&\leq \frac{1}{2} \Vert \Tilde{x}(t+\tau) - \Tilde{x}(t) \Vert^2 \\
			&+ \tau \left( \Vert A \Vert_2 \Vert \eta \Vert_{L^\infty(0,t+\tau;\mathbb{R}^n)} + \Vert G \Vert_2 \Vert \eta \Vert_{L^\infty(0,t+\tau;\mathbb{R}^n)}^2 \right.  
			+\left. \vphantom{\Vert \eta \Vert_{L^\infty(0,t+\tau;\mathbb{R}^n)}^2} \Vert F \Vert_2^2 \Vert \mathcal{P}_{t+\tau}(\xi,\omega) \Vert_{L^\infty(0,t+\tau;\mathbb{R}^n)}  \right)\\ 
			&\left( \Vert \eta \Vert_{L^\infty(0,t+\tau;\mathbb{R}^n)} + \Vert \xi \Vert + \Vert \Tilde{x}(t) \Vert \right).
		\end{aligned}
	\end{equation*}
	Since all the norms on the right hand side can be bounded uniformly in $\tau$, there exists some constant $c > 0$ such that for all $s \in [t,t+\tau)$ it holds
	\begin{equation*}
		\frac{1}{2} \left\Vert \mathcal{X}_{t+\tau}(\xi,\omega)(s) - \mathcal{X}_t(\xi,\omega)(t) \right\Vert
		\leq \frac{1}{\sqrt{2}} \Vert \Tilde{x}(t+\tau) - \Tilde{x}(t) \Vert + \sqrt{\tau} c.
	\end{equation*}
	Hence
	\begin{equation*}
		\begin{aligned}
			&\frac{1}{\tau} \int_t^{t+\tau} \left\Vert \mathcal{X}_{t+\tau}(\xi,\omega)(s) - \mathcal{X}_t(\xi,\omega)(t) \right\Vert \,\mathrm{d}s\\
			&\leq \frac{1}{\tau} \int_t^{t+\tau} \sqrt{2} \Vert \Tilde{x}(t+\tau) - \Tilde{x}(t) \Vert + 2\sqrt{\tau} c \,\mathrm{d}s
			= \sqrt{2} \Vert \Tilde{x}(t+\tau) - \Tilde{x}(t) \Vert + 2 \sqrt{\tau} c.
		\end{aligned}
	\end{equation*}
	The continuity of $\Tilde{x}$ yields the assertion.
\end{proof}
\begin{lemma}\label{Lemma: V differentiable Auxiliary2}
	Let $(\xi,\omega) \in \mathcal{U}_{\tfrac{1}{2}\delta_4}(0) \subset \mathbb{R}^n \times L^2(0,T;\mathbb{R}^r)$ and assume $\omega$ to be continuous. Then for all $t \in (0,T)$ it holds
	\begin{equation*}
		\frac{1}{\tau} \int_t^{t+\tau} \left\Vert \mathcal{P}_{t+\tau}(\xi,\omega)(s) - \mathcal{P}_t(\xi,\omega)(t) \right\Vert \,\mathrm{d}s \rightarrow 0,~~~~~\tau \searrow 0.
	\end{equation*}
\end{lemma}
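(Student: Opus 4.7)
The plan is to imitate the structure of Lemma \ref{Lemma: V differentiable Auxiliary1}, but since $\mathcal{P}_{t+\tau}$ and $\mathcal{P}_t$ live on different time intervals and solve different adjoint equations, a direct testing argument as in the state case will not suffice; an extra step comparing the two adjoints at the common endpoint $t$ is needed. The first step will be the triangle inequality split
\begin{equation*}
\Vert \mathcal{P}_{t+\tau}(\xi,\omega)(s) - \mathcal{P}_t(\xi,\omega)(t) \Vert
\leq \Vert \mathcal{P}_{t+\tau}(\xi,\omega)(s) - \mathcal{P}_{t+\tau}(\xi,\omega)(t) \Vert
+ \Vert \mathcal{P}_{t+\tau}(\xi,\omega)(t) - \mathcal{P}_t(\xi,\omega)(t) \Vert
\end{equation*}
for $s \in [t,t+\tau]$, so that the two contributions can be estimated separately.

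For the first summand I would read off the adjoint equation \eqref{Adjoint1} and exploit that $\mathcal{X}_{t+\tau}(\xi,\omega)$, $\mathcal{P}_{t+\tau}(\xi,\omega)$ and $\tilde{x}$ are uniformly bounded in $L^\infty$ (by Proposition \ref{Proposition: Existence} and Proposition \ref{Proposition: FirstOrderOptimalityCondition}), together with the fact that $\omega$ is continuous on the compact interval $[0,T]$ and therefore bounded, to produce a constant $L > 0$ independent of $\tau$ with $\Vert \dot{\mathcal{P}}_{t+\tau}(\xi,\omega) \Vert_{L^\infty(0,t+\tau;\mathbb{R}^n)} \leq L$. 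Then $\Vert \mathcal{P}_{t+\tau}(\xi,\omega)(s) - \mathcal{P}_{t+\tau}(\xi,\omega)(t) \Vert \leq L\tau$ for every $s \in [t,t+\tau]$, so this contribution is at most $L\tau$ after dividing by $\tau$ and hence vanishes.

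The second summand is the main obstacle. Here I would invoke Bellman's principle as in the proofs of Proposition \ref{Proposition: Continuity of the Value Function} and Theorem \ref{Theorem: HJB holds}: the restriction of the optimal triple of the problem on $[0,t+\tau]$ with final datum $\xi$ to the subinterval $[0,t]$ is again optimal, this time for the subproblem with final datum $\eta_\tau \coloneqq \mathcal{X}_{t+\tau}(\xi,\omega)(t) - \tilde{x}(t)$, and by the uniqueness from Proposition \ref{Proposition: Mappings Data to Minimizer} this restriction coincides with $(\mathcal{X}_t(\eta_\tau,\omega), \mathcal{U}_t(\eta_\tau,\omega), \mathcal{P}_t(\eta_\tau,\omega))$. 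Evaluating the adjoint identity at time $t$ and applying the feedback characterisation of Corollary \ref{Corollary: Feedback Representation Optimal Control} to the subproblem yields
\begin{equation*}
\mathcal{P}_{t+\tau}(\xi,\omega)(t) - \mathcal{P}_t(\xi,\omega)(t)
= \nabla_\xi \mathcal{V}(t,\xi,\omega) - \nabla_\xi \mathcal{V}(t,\eta_\tau,\omega).
\end{equation*}
The estimate $\Vert \eta_\tau - \xi \Vert \leq \Vert \tilde{x}(t+\tau) - \tilde{x}(t) \Vert + \tau \Vert \dot{\mathcal{X}}_{t+\tau}(\xi,\omega) \Vert_{L^\infty}$ already derived inside the proof of Proposition \ref{Proposition: Continuity of the Value Function} shows $\eta_\tau \to \xi$ as $\tau \searrow 0$, hence $\eta_\tau \in \mathcal{U}_{\delta_4}(0)$ for sufficiently small $\tau$, and the continuity of $\xi \mapsto \nabla_\xi \mathcal{V}(t,\xi,\omega)$ from Corollary \ref{Corollary: Value function C infty wrt space} closes the argument. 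Since this second summand is independent of $s$, the averaging factor $1/\tau$ exactly cancels the length of the interval and one is left with a pointwise difference that vanishes in the limit; thus the hardest conceptual point is precisely the transfer of Bellman's principle from the state to the adjoint via uniqueness of the optimal triple, while all remaining bookkeeping relies on the uniform (in $\tau$) estimates carefully established earlier in the paper.
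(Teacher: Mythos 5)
Your proof is correct, but it takes a genuinely different route from the paper's. The paper argues exactly as in Lemma \ref{Lemma: V differentiable Auxiliary1}: it time-reverses the adjoint via $\eta(z)=\mathcal{P}_{t+\tau}(\xi,\omega)(t+\tau-z)$, tests the resulting equation with $\eta(z)-\mathcal{P}_t(\xi,\omega)(t)$ and integrates, so that the only nontrivial contribution is the initial term $\mathcal{P}_{t+\tau}(\xi,\omega)(t+\tau)-\mathcal{P}_t(\xi,\omega)(t)$; by Corollary \ref{Corollary: Feedback Representation Optimal Control} this equals $-\nabla_\xi\mathcal{V}(t+\tau,\xi,\omega)+\nabla_\xi\mathcal{V}(t,\xi,\omega)$, which vanishes by the \emph{time} continuity of the gradient (Proposition \ref{Proposition: Space derivatives continuous in time}). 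You instead split at the common time $t$, control the oscillation of $\mathcal{P}_{t+\tau}(\xi,\omega)$ on $[t,t+\tau]$ by the elementary $L^\infty$ bound on $\dot{\mathcal{P}}_{t+\tau}(\xi,\omega)$ read off from \eqref{Adjoint1} (this is where your continuity of $\omega$ enters, giving boundedness), and handle the endpoint mismatch by identifying, via the dynamic programming principle together with the uniqueness statements of Proposition \ref{Proposition: Mappings Data to Minimizer} and Proposition \ref{Proposition: FirstOrderOptimalityCondition}, the restriction $\mathcal{P}_{t+\tau}(\xi,\omega)\vert_{[0,t]}$ with $\mathcal{P}_t(\eta_\tau,\omega)$ for $\eta_\tau=\mathcal{X}_{t+\tau}(\xi,\omega)(t)-\tilde{x}(t)$, so that only the \emph{spatial} continuity of $\nabla_\xi\mathcal{V}(t,\cdot,\omega)$ (Corollary \ref{Corollary: Value function C infty wrt space}) is needed. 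What your route buys: it avoids both the Gronwall-type testing computation and any appeal to the comparatively heavy Proposition \ref{Proposition: Space derivatives continuous in time}. What it costs: the identification of the restricted adjoint is a step the paper never makes explicit (Bellman's principle is used there only at the level of value functions), so you owe the short verifications that the restricted pair is optimal for the subproblem by the usual concatenation argument, that the restricted adjoint solves the subproblem's optimality system with the same initial condition at $s=0$, and that the data lie in the right neighborhoods; on the last point, Corollary \ref{Corollary: Feedback Representation Optimal Control} is stated for data in $\mathcal{U}_{\frac{1}{2}\delta_4}(0)$, slightly stronger than the $\mathcal{U}_{\delta_4}(0)$ you invoke for $\eta_\tau$ --- harmless, since $\eta_\tau\to\xi$ and $\Vert\xi\Vert<\tfrac{1}{2}\delta_4$ strictly, but it should be stated.
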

\begin{proof}
	The proof is done analogously to the proof of Lemma \ref{Lemma: V differentiable Auxiliary1} using the transformation $\eta(z) = \mathcal{P}_{t+\tau}(\xi,\omega)(t+\tau-z)$. The initial term of this transformation does not lead to a term involving $\tilde{x}$ but instead $  \mathcal{P}_{t + \tau}(\xi,\omega)(t+\tau) - \mathcal{P}_t(\xi,\omega)(t) $. With Lemma \ref{Corollary: Feedback Representation Optimal Control} this is equal to $ - \nabla_\xi \mathcal{V}(t+\tau,\xi,\omega) + \nabla_\xi \mathcal{V}(t,\xi,\omega)$. Proposition \ref{Proposition: Space derivatives continuous in time} ensures the convergence to zero. 
\end{proof}
\begin{lemma}\label{Lemma: V differentiable Auxiliary3}
	Let $(\xi,\omega) \in \mathcal{U}_{\tfrac{1}{2}\delta_4}(0) \subset \mathbb{R}^n \times L^2(0,T;\mathbb{R}^r)$ and assume $\omega$ to be continuous. Then for all $t \in (0,T)$ it holds
	\begin{equation*}
		\frac{1}{\tau} \int_t^{t+\tau} \dot{\mathcal{X}}_{t+\tau}(\xi,\omega)(s) \,\mathrm{d}s \rightarrow \dot{\mathcal{X}}_t(\xi,\omega)(t),~~~~~\tau \searrow 0.
	\end{equation*}
\end{lemma}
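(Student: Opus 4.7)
The plan is to express the integrand via the state equation satisfied by $\mathcal{X}_{t+\tau}(\xi,\omega)$, split it into three terms, and reduce each one to the two preceding auxiliary lemmas.

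Since $(\mathcal{X}_{t+\tau}(\xi,\omega),\mathcal{U}_{t+\tau}(\xi,\omega),\mathcal{P}_{t+\tau}(\xi,\omega))$ satisfies the first order optimality system, and $\mathcal{U}_{t+\tau}(\xi,\omega) = -F^\top\mathcal{P}_{t+\tau}(\xi,\omega)$, for a.e. $s \in (0,t+\tau)$ one has
\begin{equation*}
\dot{\mathcal{X}}_{t+\tau}(\xi,\omega)(s)
= A\,\mathcal{X}_{t+\tau}(\xi,\omega)(s) + G\bigl(\mathcal{X}_{t+\tau}(\xi,\omega)(s) \otimes \mathcal{X}_{t+\tau}(\xi,\omega)(s)\bigr) - FF^\top \mathcal{P}_{t+\tau}(\xi,\omega)(s).
\end{equation*}
The analogous identity at $s=t$ for the $t$-problem gives an expression for $\dot{\mathcal{X}}_t(\xi,\omega)(t)$. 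Subtracting and integrating, it suffices to prove that the mean-value averages of the three differences converge to zero as $\tau \searrow 0$: the linear-in-$\mathcal{X}$ term, the quadratic term, and the linear-in-$\mathcal{P}$ term.

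For the $A$-term I would estimate by $\|A\|_2$ times the integral of $\|\mathcal{X}_{t+\tau}(\xi,\omega)(s) - \mathcal{X}_t(\xi,\omega)(t)\|$ and apply Lemma \ref{Lemma: V differentiable Auxiliary1} directly. For the $F F^\top$-term the same strategy reduces it to Lemma \ref{Lemma: V differentiable Auxiliary2}, which is precisely where the continuity of $\omega$ enters (through that lemma's hypothesis). For the quadratic term, use the standard identity
\begin{equation*}
x \otimes x - y \otimes y = (x - y) \otimes x + y \otimes (x - y),
\end{equation*}
so that
\begin{equation*}
\bigl\| G\bigl(\mathcal{X}_{t+\tau}(\xi,\omega)(s) \otimes \mathcal{X}_{t+\tau}(\xi,\omega)(s)\bigr) - G\bigl(\mathcal{X}_t(\xi,\omega)(t) \otimes \mathcal{X}_t(\xi,\omega)(t)\bigr) \bigr\|
\leq \kappa\, \bigl\| \mathcal{X}_{t+\tau}(\xi,\omega)(s) - \mathcal{X}_t(\xi,\omega)(t) \bigr\|,
\end{equation*}
where $\kappa = \|G\|_2 \bigl(\|\mathcal{X}_{t+\tau}(\xi,\omega)\|_{L^\infty(0,t+\tau;\mathbb{R}^n)} + \|\mathcal{X}_t(\xi,\omega)\|_{L^\infty(0,t;\mathbb{R}^n)}\bigr)$ is uniformly bounded in $\tau$ thanks to Proposition \ref{Proposition: Existence} and the $L^\infty$-bound on $\tilde{x}$. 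Hence this term again reduces to Lemma \ref{Lemma: V differentiable Auxiliary1}.

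There is no real obstacle once one has the two preceding lemmas in hand; the only point requiring a touch of care is the handling of the quadratic nonlinearity, and the crucial role of $\omega$'s continuity is that Lemma \ref{Lemma: V differentiable Auxiliary2} requires it in order to dispose of the $\mathcal{P}$-contribution. Summing the three bounds and letting $\tau \searrow 0$ yields the claimed limit.
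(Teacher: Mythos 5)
Your proposal is correct and matches the paper's own proof essentially verbatim: the paper likewise substitutes the state equation (with $\mathcal{U}_{t+\tau}(\xi,\omega) = -F^\top\mathcal{P}_{t+\tau}(\xi,\omega)$), splits the difference into the $A$-term, the quadratic term, and the $FF^\top\mathcal{P}$-term, handles the quadratic term with the same bilinear decomposition and the uniform $L^\infty$-bound from Proposition \ref{Proposition: Existence}, and concludes via Lemmas \ref{Lemma: V differentiable Auxiliary1} and \ref{Lemma: V differentiable Auxiliary2}. Your identification of where the continuity of $\omega$ enters (only through the hypothesis of Lemma \ref{Lemma: V differentiable Auxiliary2}) is also exactly right.
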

\begin{proof}
	Let $t \in (0,T)$ be fixed and let $\tau > 0 $ be small enough such that $t+\tau < T$. Then
	\begin{equation*}
		\begin{aligned}
			&\left\Vert \frac{1}{\tau} \int_t^{t+\tau} \dot{\mathcal{X}}_{t+\tau}(\xi,\omega)(s) \,\mathrm{d}s - \dot{\mathcal{X}}_t(\xi,\omega)(t) \right\Vert\\
			&\leq \frac{1}{\tau} \int_t^{t+\tau}
			\Vert A(\mathcal{X}_{t+\tau}(\xi,\omega)(s) - \mathcal{X}_t(\xi,\omega)(t)) \Vert
			+ \Vert F (\mathcal{U}_{t+\tau}(\xi,\omega)(s) - \mathcal{U}_t(\xi,\omega)(t)) \Vert\\
			&+ \Vert G (\mathcal{X}_{t+\tau}(\xi,\omega)(s) \otimes ( \mathcal{X}_{t+\tau}(\xi,\omega)(s) - \mathcal{X}_t(\xi,\omega)(t)) ) \Vert \\
			&+ \Vert G (( \mathcal{X}_{t+\tau}(\xi,\omega)(s) - \mathcal{X}_t(\xi,\omega)(t) ) \otimes \mathcal{X}_t(\xi,\omega)(t)) \Vert
			\,\mathrm{d}s\\
			&\leq \left( \Vert A \Vert_2 + \Vert G \Vert_2 \Vert \mathcal{X}_{t+\tau}(\xi,\omega) \Vert_{L^\infty(0,t+\tau;\mathbb{R}^n)} + \Vert G \Vert_2 \Vert \mathcal{X}_{t}(\xi,\omega)(t) \Vert \right)\\
			&\frac{1}{\tau} \int_t^{t+\tau} \Vert \mathcal{X}_{t+\tau}(\xi,\omega)(s) - \mathcal{X}_t(\xi,\omega)(t) \Vert \,\mathrm{d}s
			+ \Vert F \Vert_2^2 \frac{1}{\tau} \int_t^{t+\tau} \Vert \mathcal{P}_{t+\tau}(\xi,\omega)(s) - \mathcal{P}_t(\xi,\omega)(t) \Vert \,\mathrm{d}s.
		\end{aligned}
	\end{equation*}
	The assertion follows with Lemma \ref{Lemma: V differentiable Auxiliary1}, Lemma \ref{Lemma: V differentiable Auxiliary2} and the uniform bound on \\ $ \Vert \mathcal{X}_{t+\tau}(\xi,\omega) \Vert_{L^\infty(0,t+\tau;\mathbb{R}^n)} $ derived from Proposition \ref{Proposition: Existence}.
\end{proof}
\begin{lemma}\label{Lemma: V differentiable Auxiliary4}
	Let $(\xi,\omega) \in \mathcal{U}_{\tfrac{1}{2}\delta_4}(0) \subset \mathbb{R}^n \times L^2(0,T;\mathbb{R}^r)$ and assume $\omega$ to be continuous. Then for all $t \in (0,T)$ and $\tau \searrow 0$ it holds
	\begin{equation*}
	\begin{aligned}
		&\frac{1}{\tau} \int_t^{t+\tau} \left\Vert \mathcal{U}_{t+\tau}(\xi,\omega) \right\Vert^2 
		+ \alpha \left\Vert \omega - C(\mathcal{X}_{t+\tau}(\xi,\omega) - \Tilde{x}) \right\Vert^2 \,\mathrm{d}s \\
		&\rightarrow \left\Vert \mathcal{U}_{t}(\xi,\omega)(t) \right\Vert^2 
		+ \alpha \left\Vert \omega(t) - C(\mathcal{X}_{t}(\xi,\omega)(t) - \Tilde{x}(t)) \right\Vert^2.
	\end{aligned}
	\end{equation*}
\end{lemma}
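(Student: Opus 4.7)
The plan is to add and subtract the target pointwise value and reduce to showing that the average of the resulting remainder vanishes as $\tau \searrow 0$. Setting
\begin{equation*}
g_\tau(s) \coloneqq \Vert \mathcal{U}_{t+\tau}(\xi,\omega)(s) \Vert^2 + \alpha \Vert \omega(s) - C(\mathcal{X}_{t+\tau}(\xi,\omega)(s) - \Tilde{x}(s)) \Vert^2
\end{equation*}
and letting $g_0 \coloneqq \Vert \mathcal{U}_t(\xi,\omega)(t) \Vert^2 + \alpha \Vert \omega(t) - C(\mathcal{X}_t(\xi,\omega)(t) - \Tilde{x}(t)) \Vert^2$ denote the claimed limit, one has $\tfrac{1}{\tau}\int_t^{t+\tau} g_0 \,\mathrm{d}s = g_0$, so it suffices to show $\tfrac{1}{\tau} \int_t^{t+\tau} |g_\tau(s) - g_0| \,\mathrm{d}s \to 0$.

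To treat the two squared-norm differences I would invoke the polarization identity $\Vert a \Vert^2 - \Vert b \Vert^2 = \langle a-b,a+b\rangle$ followed by Cauchy--Schwarz. The factor $\Vert a+b\Vert$ produced in each summand is bounded uniformly in $s \in [t,t+\tau]$ and in all sufficiently small $\tau>0$: this follows from the $V_{t+\tau}$-estimate on $\mathcal{X}_{t+\tau}(\xi,\omega)$ in Proposition \ref{Proposition: Existence}, the $V_{t+\tau}$-estimate on $\mathcal{P}_{t+\tau}(\xi,\omega)$ in Proposition \ref{Proposition: FirstOrderOptimalityCondition} together with the adjoint relation $\mathcal{U}_{t+\tau}(\xi,\omega) = -F^\top \mathcal{P}_{t+\tau}(\xi,\omega)$ coming from \eqref{OptimalityConditionFormula}, the finite quantity $\Vert \Tilde{x}\Vert_{L^\infty(0,T;\mathbb{R}^n)}$, and the continuity of $\omega$ at $t$ which makes $\omega$ bounded on a neighborhood of $t$. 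Crucially, all of these bounds are independent of $\tau$, since the $V_t$-norm incorporates the sup norm and the estimates in the cited propositions are likewise $t$-uniform.

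With the uniform factors pulled out it remains to control the averages of $\Vert a - b\Vert$. For the control term the adjoint relation gives $\mathcal{U}_{t+\tau}(\xi,\omega)(s) - \mathcal{U}_t(\xi,\omega)(t) = -F^\top(\mathcal{P}_{t+\tau}(\xi,\omega)(s) - \mathcal{P}_t(\xi,\omega)(t))$, so Lemma \ref{Lemma: V differentiable Auxiliary2} directly yields the vanishing of the corresponding average. For the measurement residual I would split
\begin{equation*}
\begin{aligned}
&[\omega(s) - C(\mathcal{X}_{t+\tau}(\xi,\omega)(s) - \Tilde{x}(s))] - [\omega(t) - C(\mathcal{X}_t(\xi,\omega)(t) - \Tilde{x}(t))] \\
&\qquad = [\omega(s) - \omega(t)] - C[\mathcal{X}_{t+\tau}(\xi,\omega)(s) - \mathcal{X}_t(\xi,\omega)(t)] + C[\Tilde{x}(s) - \Tilde{x}(t)]
\end{aligned}
\end{equation*}
and treat the three parts separately: the first vanishes in average by the continuity of $\omega$ at $t$, the second by Lemma \ref{Lemma: V differentiable Auxiliary1}, and the third by the continuity of $\Tilde{x}$ at $t$ (which holds since $\Tilde{x} \in V_T \hookrightarrow C([0,T];\mathbb{R}^n)$).

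Combining these pieces gives the claim. The computation is essentially elementary once the two previous auxiliary lemmas are invoked; no serious obstacle is expected. The only subtle point is ensuring that the $\Vert a+b\Vert$ factors produced by the polarization identity are bounded uniformly in both $s$ and $\tau$, and it is precisely for this that the consistent use of the norm $\Vert \cdot \Vert_{V_t}$ (rather than merely $\Vert \cdot \Vert_{H^1}$) throughout the paper pays off, since it provides $\tau$-independent sup-norm control on all trajectories involved.
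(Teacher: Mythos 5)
Your proof is correct and follows essentially the same route as the paper's: both rest on the polarization identity $\Vert a \Vert^2 - \Vert b \Vert^2 = \langle a-b, a+b \rangle$, the $\tau$-uniform $L^\infty$-bounds drawn from Proposition \ref{Proposition: Existence} and Proposition \ref{Proposition: FirstOrderOptimalityCondition} (via the relation $\mathcal{U} = -F^\top \mathcal{P}$), and the averaged convergences of Lemma \ref{Lemma: V differentiable Auxiliary1} and Lemma \ref{Lemma: V differentiable Auxiliary2}, with continuity of $\omega$ and $\Tilde{x}$ absorbing the remaining terms. The only difference is cosmetic: for the measurement residual the paper expands the square into cross terms before estimating each one, whereas you apply polarization to the full residual and then split $a-b$ by the triangle inequality, which is a slightly tidier arrangement of the same estimates.
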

\begin{proof}
	Let $t\in (0,T)$ be fixed and let $\tau >0$ be small enough such that $t+\tau < T$. For the first summand consider
	\begin{equation*}
		\begin{aligned}
			&\left\vert \frac{1}{\tau} \int_t^{t+\tau} \left\Vert \mathcal{U}_{t+\tau}(\xi,\omega)(s) \right\Vert^2 \,\mathrm{d}s - \left\Vert \mathcal{U}_t(\xi,\omega)(t) \right\Vert^2 \right\vert\\
			&\leq \frac{1}{\tau} \int_t^{t+\tau} \left\vert \left\langle \mathcal{U}_{t+\tau}(\xi,\omega)(s) + \mathcal{U}_t(\xi,\omega)(t) , \mathcal{U}_{t+\tau}(\xi,\omega)(s) - \mathcal{U}_t(\xi,\omega)(t) \right\rangle \right\vert \,\mathrm{d}s\\
			&\leq \left( \Vert F \Vert_2 \Vert \mathcal{P}_{t+\tau}(\xi,\omega) \Vert_{L^\infty(0,t+\tau;\mathbb{R}^n)} + \left\Vert \mathcal{U}_t(\xi,\omega)(t) \right\Vert \right) \\
			&\Vert F \Vert_2 \frac{1}{\tau} \int_t^{t+\tau} \left\Vert \mathcal{P}_{t+\tau}(\xi,\omega)(s) - \mathcal{P}_t(\xi,\omega)(t) \right\Vert \,\mathrm{d}s.
		\end{aligned}
	\end{equation*}
	Lemma \ref{Lemma: V differentiable Auxiliary2} and the uniform bound on $\Vert \mathcal{P}_{t+\tau}(\xi,\omega) \Vert_{L^\infty(0,t+\tau;\mathbb{R}^n)}$ derived from Proposition \ref{Proposition: FirstOrderOptimalityCondition} yield the convergence. For the second summand consider
	\begin{equation*}
		\begin{aligned}
			&\left\vert \frac{1}{\tau} \int_t^{t+\tau} \left\Vert \omega(s) - C \mathcal{X}_{t+\tau}(\xi,\omega)(s) + C \Tilde{x}(s) \right\Vert^2 \,\mathrm{d}s - \left\Vert \omega(t) - C \mathcal{X}_t(\xi,\omega)(t) + C \Tilde{x}(t) \right\Vert^2 \right\vert\\
			&= \left\vert \frac{1}{\tau} \int_t^{t+\tau} \Vert \omega(s) \Vert^2 + \Vert C \Tilde{x}(s) \Vert^2 + 2 \left\langle \omega(s) , C \Tilde{x}(s) \right\rangle \,\mathrm{d}s \right. \\
			&- \left. \Vert \omega(t) \Vert^2 - \Vert C \Tilde{x}(t) \Vert^2 - 2 \left\langle \omega(t) , C \Tilde{x}(t) \right\rangle \vphantom{\frac{1}{\tau} \int_t^{t+\tau}} \right\vert\\
			&+ \left\vert \frac{1}{\tau} \int_t^{t+\tau} \left\Vert C \mathcal{X}_{t+\tau}(\xi,\omega)(s) \right\Vert^2 - \left\Vert C \mathcal{X}_t(\xi,\omega)(t) \right\Vert^2 \right.
			- 2 \left\langle \omega(s) , C \mathcal{X}_{t+\tau}(\xi,\omega)(s) \right\rangle\\ 
			&+ \left. 2 \left\langle \omega(t) , C \mathcal{X}_t(\xi,\omega)(t) \right\rangle - 2 \left\langle C \Tilde{x}(s) , C \mathcal{X}_{t+\tau}(\xi,\omega)(s) \right\rangle + 2 \left\langle C \Tilde{x}(t) , C \mathcal{X}_t(\xi,\omega)(t) \right\rangle \,\mathrm{d}s \vphantom{\int_t^{t+\tau}} \right\vert.
		\end{aligned}
	\end{equation*}
	Since $\omega$ and $\Tilde{x}$ are continuous, the first two summands of the right hand side converge to zero for $\tau \searrow 0$. The remaining part is smaller or equal to
	\begin{equation*}
		\begin{aligned}
			&\frac{1}{\tau} \int_t^{t+\tau} \left( \left\vert \left\langle C \mathcal{X}_{t+\tau}(\xi,\omega)(s) + C \mathcal{X}_t(\xi,\omega)(t) , C \mathcal{X}_{t+\tau}(\xi,\omega)(s) - C \mathcal{X}_t(\xi,\omega)(t) \right\rangle \right\vert \vphantom{\int_t^{t+\tau}} \right.\\
			&+ 2 \left\vert \left\langle \omega(s) , C \mathcal{X}_{t+\tau}(\xi,\omega)(s) - C \mathcal{X}_t(\xi,\omega)(t) \right\rangle \right\vert 
			+ 2 \left\vert \left\langle \omega(t) - \omega(s) , C \mathcal{X}_t(\xi,\omega)(t) \right\rangle \right\vert \\
			&+ 2 \left. \vphantom{\int_t^{t+\tau}} \left\vert \left\langle C \mathcal{X}_{t+\tau}(\xi,\omega)(s) - C \mathcal{X}_t(\xi,\omega)(t) , C \Tilde{x}(s) \right\rangle \right\vert
			+ 2 \left\vert \left\langle C \mathcal{X}_t(\xi,\omega)(t) , C \Tilde{x}(t) - C \Tilde{x}(s) \right\rangle \right\vert \right) \,\mathrm{d}s \\
			&\leq \Vert C \Vert_2^2 \left( \Vert \mathcal{X}_{t+\tau}(\xi,\omega) \Vert_{L^\infty(0,t+\tau;\mathbb{R}^n)} + \Vert \mathcal{X}_t(\xi,\omega)(t) \Vert \right) \\
			&\frac{1}{\tau} \int_t^{t+\tau} \left\Vert \mathcal{X}_{t+\tau}(\xi,\omega)(s) - \mathcal{X}_t(\xi,\omega)(t) \right\Vert \,\mathrm{d}s\\
			&+ 2 \Vert \omega \Vert_{L^\infty(0,T;\mathbb{R}^r)} \Vert C \Vert_2 \frac{1}{\tau} \int_t^{t+\tau} \left\Vert \mathcal{X}_{t+\tau}(\xi,\omega)(s) - \mathcal{X}_t(\xi,\omega)(t) \right\Vert \,\mathrm{d}s\\
			&+ 2 \Vert C \Vert_2 \Vert \mathcal{X}_t(\xi,\omega)(t) \Vert \left( \frac{1}{\tau} \int_t^{t+\tau} \Vert \omega(s) - \omega(t) \Vert \,\mathrm{d}s \right) \\
			&2 \Vert C \Vert_2^2 \Vert \mathcal{X}_t(\xi,\omega)(t) \Vert \left( \frac{1}{\tau}  \int_t^{t+\tau} \Vert \tilde{x}(s) - \tilde{x}(t) \Vert \,\mathrm{d}s \right)\\
			&+ 2 \Vert C \Vert_2^2 \Vert \tilde{x} \Vert_{L^\infty(0,T;\mathbb{R}^n)} \frac{1}{\tau} \int_t^{t+\tau} \left\Vert \mathcal{X}_{t+\tau}(\xi,\omega)(s) - \mathcal{X}_t(\xi,\omega)(t) \right\Vert \,\mathrm{d}s.
		\end{aligned}
	\end{equation*}
	The assertion follows with the uniform bound on $\Vert \mathcal{X}_{t+\tau}(\xi,\omega) \Vert_{L^\infty(0,t+\tau;\mathbb{R}^n)}$, Lemma \ref{Lemma: V differentiable Auxiliary1} and the continuity of $\omega$ and $\tilde{x}$.
\end{proof}
\begin{lemma}\label{Lemma: Switching Derivatives}
	Let $\omega \in L^2(0,T;\mathbb{R}^r)$ be continuous and satisfy $\Vert \omega \Vert_{L^2(0,T;\mathbb{R}^r)} < \tfrac{1}{2} \delta_4$. Then it holds:
	\begin{itemize}
		\item[(i)] The mappings $(t,\xi) \mapsto \nabla_\xi \mathcal{V}(t,\xi,\omega)$, $(t,\xi) \mapsto \nabla_{\xi\xi}^2 \mathcal{V}(t,\xi,\omega)$ and $(t,\xi) \mapsto \nabla_{\xi^3}^3 \mathcal{V}(t,\xi,\omega)$ are continuous from $[0,T] \times \mathcal{U}_{\tfrac{1}{2}\delta_4}(0)$ to their respective image spaces $\mathbb{R}^n$, $\mathbb{R}^{n,n}$ and $L(\mathbb{R}^n,\mathbb{R}^{n,n})$.
		\item[(ii)] For any fixed $\xi \in \mathcal{U}_{\tfrac{1}{2}\delta_4}$ and any $i,j \in \{ 1,...,n \}$ the function $s \mapsto \frac{\partial^2}{\partial \xi_j \partial \xi_i} \mathcal{V}(s,\xi,\omega)$ is differentiable in $(0,T)$ and it holds
		\begin{equation*}
			\partial_t \frac{\partial^2}{\partial \xi_j \partial \xi_i} \mathcal{V}(t,\xi,\omega)
			= \frac{\partial^2}{\partial \xi_j \partial \xi_i} \partial_t \mathcal{V}(t,\xi,\omega).
		\end{equation*}
	\end{itemize}
\end{lemma}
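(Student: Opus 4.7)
The plan is to address (i) and (ii) in order, since (ii) relies on (i). For (i), pointwise continuity in each variable separately is already available: Corollary~\ref{Corollary: Value function C infty wrt space} yields smoothness in $\xi$ for every fixed $t$, while Proposition~\ref{Proposition: Space derivatives continuous in time} yields continuity in $t$ for every fixed $\xi$. To upgrade separate continuity to joint continuity on $[0,T] \times \mathcal{U}_{\tfrac{1}{2}\delta_4}(0)$, I would insert an intermediate point and split, for each $k \in \{1,2,3\}$,
\begin{equation*}
\nabla_\xi^k\mathcal{V}(t,\xi,\omega) - \nabla_\xi^k\mathcal{V}(t_0,\xi_0,\omega) = \bigl[\nabla_\xi^k\mathcal{V}(t,\xi,\omega) - \nabla_\xi^k\mathcal{V}(t,\xi_0,\omega)\bigr] + \bigl[\nabla_\xi^k\mathcal{V}(t,\xi_0,\omega) - \nabla_\xi^k\mathcal{V}(t_0,\xi_0,\omega)\bigr].
\end{equation*}
The second bracket tends to zero as $t \to t_0$ by Proposition~\ref{Proposition: Space derivatives continuous in time}, independently of $\xi_0$. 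For the first bracket, the mean value theorem together with the time-uniform bound on $\nabla_\xi^{k+1}\mathcal{V}(t,\cdot,\omega)$ from Proposition~\ref{Proposition: Bounds for derivatives of the value function} yields a Lipschitz estimate of the form $\check{M}_{k+1}\|\xi - \xi_0\|$ with constant independent of $t$. Combining the two estimates gives joint continuity.

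For (ii), Theorem~\ref{Theorem: HJB holds} asserts that for each fixed $\xi \in \mathcal{U}_{\tfrac{1}{2}\delta_4}(0)$ the map $t \mapsto \mathcal{V}(t,\xi,\omega)$ is differentiable on $(0,T]$ with derivative $H(t,\xi)$ equal to the right-hand side of \eqref{HJB rigorous}, and by part (i) together with the continuity assumption on $\omega$ the function $H$ is continuous in $(s,\xi)$. Combined with the continuity in $t$ from Proposition~\ref{Proposition: Continuity of the Value Function}, the fundamental theorem of calculus yields the integral representation
\begin{equation*}
\mathcal{V}(t,\xi,\omega) - \mathcal{V}(t_0,\xi,\omega) = \int_{t_0}^t H(s,\xi) \,\mathrm{d}s, \qquad t_0, t \in (0,T].
\end{equation*}
Inspection of the formula for $H$ shows that its spatial partials of order up to two involve $\nabla_\xi^k \mathcal{V}$ for $k \leq 3$, which by (i) are jointly continuous in $(s,\xi)$ and by Proposition~\ref{Proposition: Bounds for derivatives of the value function} are uniformly bounded on the relevant neighbourhood. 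This is exactly what is required to differentiate the integral identity twice in $\xi$ under the integral sign, obtaining
\begin{equation*}
\tfrac{\partial^2}{\partial \xi_j \partial \xi_i}\mathcal{V}(t,\xi,\omega) - \tfrac{\partial^2}{\partial \xi_j \partial \xi_i}\mathcal{V}(t_0,\xi,\omega) = \int_{t_0}^t \tfrac{\partial^2}{\partial \xi_j \partial \xi_i} H(s,\xi) \,\mathrm{d}s.
\end{equation*}
Since the integrand is continuous in $s$ (again by (i)), a final application of the fundamental theorem of calculus yields the desired identity $\partial_t \tfrac{\partial^2}{\partial \xi_j \partial \xi_i}\mathcal{V} = \tfrac{\partial^2}{\partial \xi_j \partial \xi_i} \partial_t \mathcal{V}$ on $(0,T)$.

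The main obstacle is (i): the passage from separate to joint continuity hinges on having a Lipschitz modulus in $\xi$ that does not degenerate as $t \to 0$. This is precisely where the paper's careful bookkeeping to keep all bounds in Proposition~\ref{Proposition: Bounds for derivatives of the value function} independent of $t$ pays off. Once (i) is established, (ii) reduces to a routine application of differentiation under the integral: the HJB equation supplies an explicit expression for $\partial_t \mathcal{V}$ in terms of spatial quantities whose continuity and boundedness are already controlled, so the exchange of $\partial_t$ with $\tfrac{\partial^2}{\partial \xi_j \partial \xi_i}$ is essentially calculus.
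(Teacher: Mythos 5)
Your proof is correct, and while part (i) follows essentially the paper's own argument, part (ii) takes a genuinely different route. For (i), the paper performs exactly your splitting: a time increment at fixed spatial point, handled by Proposition \ref{Proposition: Space derivatives continuous in time}, plus a spatial increment at fixed time, handled by Taylor's theorem and the time-uniform bounds $\check{M}_{k+1}$ from Proposition \ref{Proposition: Bounds for derivatives of the value function} -- so here you are in full agreement, including your correct identification of the time-independence of the bounds as the crucial ingredient. For (ii), the paper does \emph{not} integrate the HJB equation. Instead it introduces the auxiliary scalar function $f(t,x,y)=\mathcal{V}(t,xe_i+ye_j+\xi,\omega)$ and applies a classical symmetry-of-mixed-partials theorem (Rudin, Theorem 9.41) twice: first to $f$ itself, differentiating \eqref{HJB rigorous} once in $\xi_i$ and verifying via part (i) that $\frac{\partial^2}{\partial x \partial t}f$ is continuous, which yields $\frac{\partial^2}{\partial x \partial t}f = \frac{\partial^2}{\partial t \partial x}f$; then again to $(t,y)\mapsto \frac{\partial}{\partial x}f(t,0,y)$, using the twice-differentiated HJB equation, to exchange $t$ and $y$; combining the two identities gives the assertion. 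Your alternative -- write $\mathcal{V}(t,\xi,\omega)-\mathcal{V}(t_0,\xi,\omega)=\int_{t_0}^t H(s,\xi)\,\mathrm{d}s$ by the fundamental theorem of calculus (legitimate since $H(\cdot,\xi)$ is continuous on $(0,T]$ by Proposition \ref{Proposition: Space derivatives continuous in time} and the continuity of $\omega$ and $\tilde{x}$, which is precisely where the continuity hypothesis on $\omega$ enters), differentiate twice in $\xi$ under the integral sign using the joint continuity from (i) and the uniform bounds, and apply the fundamental theorem once more -- is sound: all hypotheses of the Leibniz rule are supplied by (i) and Proposition \ref{Proposition: Bounds for derivatives of the value function}, since $\frac{\partial^2}{\partial \xi_j \partial \xi_i}H$ involves only $\nabla^k_{\xi}\mathcal{V}$ for $k\leq 3$. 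Your route is arguably more economical: it delivers the exchange of $\partial_t$ with spatial derivatives of \emph{any} order in one pass, whereas the paper must iterate the Rudin argument once per spatial order (its Steps 1 and 2), carefully tracking which mixed partials exist in which order; the paper's approach, in turn, needs only pointwise continuity of a single mixed partial at the point in question rather than the integral-representation machinery.
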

\begin{proof}
	(i) We will show continuity for the partial derivatives which then implies (i). To that end let $(t,\xi) \in [0,T] \times \mathcal{U}_{\tfrac{1}{2}\delta_4}(0)$ be arbitrary. Then for all $(\Delta t, \Delta \xi)$ such that $(t+\Delta t, \xi + \Delta \xi) \in [0,T] \times \mathcal{U}_{\tfrac{1}{2}\delta_4}(0)$ and any $i \in \{ 1,...,n \}$ it holds
	\begin{equation*}
		\begin{aligned}
			&\left\vert \frac{\partial}{\partial \xi_i} \mathcal{V}(t+\Delta t,\xi+ \Delta \xi,\omega) - \frac{\partial}{\partial \xi_i} \mathcal{V}(t,\xi,\omega) \right\vert\\
			&\leq \left\vert \frac{\partial}{\partial \xi_i} \mathcal{V}(t+\Delta t,\xi + \Delta \xi,\omega) - \frac{\partial}{\partial \xi_i} \mathcal{V}(t + \Delta t,\xi,\omega) \right\vert
			+ \left\vert \frac{\partial}{\partial \xi_i} \mathcal{V}(t + \Delta t,\xi,\omega) - \frac{\partial}{\partial \xi_i} \mathcal{V}(t,\xi,\omega) \right\vert.    
		\end{aligned}
	\end{equation*}
	Due to the time continuity of the gradient shown in Proposition \ref{Proposition: Space derivatives continuous in time} the second summand converges to zero for $\Delta t \to 0$. Taylor's Theorem implies existence of $\theta \in [0,1]$ such that the first summand is equal to 
	\begin{equation*}
		\left\vert \nabla_\xi \left[ \frac{\partial}{\partial \xi_i} \mathcal{V}\right](t + \Delta t,\xi + \theta \Delta \xi,\omega) ~\Delta \xi \right\vert.
	\end{equation*}
	For sufficiently small $\Delta \xi$ Proposition \ref{Proposition: Bounds for derivatives of the value function} can be used to estimate this expression by $\check{M}_2 \Vert \Delta \xi \Vert$, which converges to zero for $\Delta \xi \to 0$. Utilizing the results from Proposition \ref{Proposition: Bounds for derivatives of the value function} and Proposition  \ref{Proposition: Space derivatives continuous in time}
	the continuity of the partial derivatives of order two and three can be shown analogously.
	
	(ii)Let $\epsilon > 0$ be small enough such that for any $x,y \in ( - \epsilon, \epsilon)$ it holds $\left\Vert x e_i + y e_j + \xi \right\Vert < \tfrac{1}{2} \delta_4. $
	Define the function
	\begin{equation*}
		f \colon (0,T) \times (-\epsilon,\epsilon)^2 \rightarrow \mathbb{R},~~~ (t,x,y) \mapsto \mathcal{V}(t, x e_i + y e_j + \xi, \omega).
	\end{equation*}
	Step 1:\\
	Show that for any fixed $\bar{t} \in (0,T)$ and $y \in (-\epsilon,\epsilon)$ it holds 
	\begin{equation*}
		\frac{\partial^2}{\partial x \partial t} f(\bar{t},0,y) = \frac{\partial^2}{\partial t \partial x} f(\bar{t},0,y).
	\end{equation*}
	Let $t \in (0,T)$ and $x \in (-\epsilon,\epsilon)$ be arbitrary and note that due to Corollary \ref{Corollary: Value function C infty wrt space} and Theorem \ref{Theorem: HJB holds}
	\begin{equation*}
		\frac{\partial}{\partial x}f(t,x,y) = \frac{\partial}{\partial \xi_i} \mathcal{V}(t,xe_i+ye_j+\xi,\omega) \hspace{3mm} \text{  and  } \hspace{3mm}
		\frac{\partial}{\partial t}f(t,x,y) = \partial_t \mathcal{V}(t,xe_i+ye_j+\xi,\omega)
	\end{equation*}
	exist. Taking a partial derivative of \eqref{HJB rigorous} and noting that $\mathcal{V}$ is $C^\infty$ with respect to $\xi$ in $\mathcal{U}_{\tfrac{1}{2} \delta_4}(0)$ ensures the existence of 
	\begin{equation}\label{Proof Switching Derivatives1}
		\begin{aligned}
			\frac{\partial^2}{\partial x \partial t}f(t,x,y) 
			&= \frac{\partial}{\partial \xi_i} \partial_t \mathcal{V}(t,xe_i+ye_j+\xi,\omega) \\
			&= - \left\langle \frac{\partial}{\partial \xi_i} \nabla_\xi \mathcal{V}(t,xe_i+ye_j+\xi,\omega) , h(t,xe_i+ye_j+\xi) \right\rangle\\
			&- \left\langle \nabla_\xi \mathcal{V}(t,xe_i+ye_j+\xi,\omega) , \frac{\partial}{\partial \xi_i} h(t,xe_i + ye_j +\xi) \right\rangle\\
			&- \left\langle F^\top \frac{\partial}{\partial \xi_i} \nabla_\xi \mathcal{V}(t,xe_i+ye_j+\xi,\omega) , F^\top \nabla_\xi \mathcal{V}(t,xe_i+ye_j+\xi,\omega) \right\rangle\\
			&+ \alpha \left\langle C^\top Ce_i , xe_i+ye_j+\xi \right\rangle
			- \alpha \left\langle C^\top \omega(t) , e_i \right\rangle.
		\end{aligned}
	\end{equation}
	It remains to show that this function is continuous in $(t,x) = (\bar{t},0)$, where $\bar{t} \in (0,T)$ is arbitrary. This will be achieved by an examination of the terms on the right hand side. With the assumption $\epsilon$ (i) yields the continuity of the first and second order spatial derivatives. Continuity of the remaining terms in \eqref{Proof Switching Derivatives1} can be seen easily. Hence \cite[9.41 Theorem]{Rud76} yields the existence of $ \frac{\partial^2}{\partial t \partial x}f(\bar{t},0,y) $ and it holds $ \frac{\partial^2}{\partial x \partial t}f(\bar{t},0,y) = \frac{\partial^2}{\partial t \partial x}f(\bar{t},0,y) $. The derivative of the HJB equation ensures that $\frac{\partial^2}{\partial x \partial t} f(\bar{t},0,y)$ is differentiable in $(-\epsilon,\epsilon)$ with respect to $y$. Since the derived equation holds for all $y \in (-\epsilon,\epsilon)$ it follows
	\begin{equation}\label{Proof Switching Derivatives2}
		\frac{\partial^3}{\partial y \partial x \partial t}f(\bar{t},0,y) = \frac{\partial^3}{\partial y \partial t \partial x}f(\bar{t},0,y).
	\end{equation}
	Step 2:\\
	Show that for any $\bar{t} \in (0,T)$ it holds
	\begin{equation}\label{Proof Switching Derivatives3}
		\frac{\partial^3}{\partial y \partial t \partial x}f(\bar{t},0,0) = \frac{\partial^3}{\partial t \partial y \partial x}f(\bar{t},0,0).
	\end{equation}
	This will be achieved by an application of \cite[9.41 Theorem]{Rud76} to the function 
	\begin{equation*}
		(0,T) \times (-\epsilon,\epsilon) \rightarrow \mathbb{R},~~~ (t,y) \mapsto \frac{\partial}{\partial x} f(t,0,y).
	\end{equation*}
	Let $t \in (0,T)$ and $y \in (-\epsilon, \epsilon)$ be arbitrary. Since $\mathcal{V}$ is $C^\infty$ with respect to $\xi$ on an appropriate neighborhood of zero, the partial derivative
	\begin{equation*}
		\frac{\partial}{\partial y} \frac{\partial}{\partial x} f(t,0,y) = \frac{\partial^2}{\partial \xi_j \partial \xi_i} \mathcal{V}(t, y e_j + \xi, \omega)
	\end{equation*}
	exists.
	Furthermore Step 1 ensures the existence of the two partial derivatives
	\begin{equation*}
		\frac{\partial}{\partial t} \frac{\partial}{\partial x}f(t,0,y)~~~~~\text{and}~~~~~
		\frac{\partial^2}{\partial y \partial t} \frac{\partial}{\partial x}f(t,0,y).
	\end{equation*}
	It remains to show that the function 
	\begin{equation*}
		(t,y) \mapsto \frac{\partial^3}{\partial y \partial t \partial x} f(t,0,y)
	\end{equation*}
	is continuous in $(t,y) = (\bar{t},0)$, where $\bar{t} \in (0,T)$ is arbitrary. To that end form derivatives with respect to $\xi_i$ and $\xi_j$ of \eqref{HJB rigorous} to obtain
	\begin{equation}
		\begin{aligned}\label{Proof Switching Derivatives4} 
			&\frac{\partial^3}{\partial y \partial t \partial x} f(t,0,y) 
			= \frac{\partial^3}{\partial y \partial x \partial t} f(t,0,y)
			= \frac{\partial^2}{\partial \xi_j \partial \xi_i} \partial_t \mathcal{V}(t, y e_j + \xi, \omega)\\
			&= - \left\langle \frac{\partial^2}{\partial \xi_j \partial \xi_i} \nabla_\xi \mathcal{V}(t, y e_j + \xi,\omega) , h(t, y e_j + \xi) \right\rangle \\
			&- \left\langle \frac{\partial}{\partial \xi_i} \nabla_\xi \mathcal{V}(t, y e_j + \xi,\omega) , \frac{\partial}{\partial \xi_j} h(t, y e_j + \xi) \right\rangle\\
			&- \left\langle \frac{\partial}{\partial \xi_j} \nabla_\xi \mathcal{V}(t, y e_j + \xi,\omega) , \frac{\partial}{\partial \xi_i} h(t, y e_j + \xi) \right\rangle
			- \left\langle \nabla_\xi \mathcal{V}(t, y e_j + \xi,\omega) , \frac{\partial^2}{\partial \xi_j \partial \xi_i} h(t, y e_j + \xi) \right\rangle\\
			&- \left\langle F^\top \frac{\partial^2}{\partial \xi_j \partial \xi_i} \nabla_\xi \mathcal{V}(t, y e_j + \xi,\omega) , F^\top \nabla_\xi \mathcal{V}(t, y e_j + \xi,\omega) \right\rangle\\
			&- \left\langle F^\top \frac{\partial}{\partial \xi_i} \nabla_\xi \mathcal{V}(t, y e_j + \xi,\omega) , F^\top \frac{\partial}{\partial \xi_j} \nabla_\xi \mathcal{V}(t, y e_j + \xi,\omega) \right\rangle
			+ \alpha \left\langle C^\top C e_i , e_j \right\rangle
		\end{aligned}
	\end{equation}
	and consider the term on the right hand side. Their continuity can be shown analogously to Step 1.
	Therefore $(t,y) \mapsto \frac{\partial^3}{\partial y \partial t \partial x} f(t,0,y)$ is continuous in $(\bar{t},0)$ and \cite[9.41 Theorem]{Rud76} implies (\ref{Proof Switching Derivatives3}).\\
	Step 3:\\
	Now (\ref{Proof Switching Derivatives2}) with $y = 0$ together with (\ref{Proof Switching Derivatives3}) yields
	\begin{equation*}
		\frac{\partial^2}{\partial \xi_j \partial \xi_i} \partial_t \mathcal{V}(\bar{t},\xi,\omega)
		=\frac{\partial^3}{\partial y \partial x \partial t}f(\bar{t},0,0) = \frac{\partial^3}{\partial t \partial y \partial x}f(\bar{t},0,0)
		= \partial_t \frac{\partial^2}{\partial \xi_j \partial \xi_i} \mathcal{V}(\bar{t},\xi,\omega).
	\end{equation*}
\end{proof}
\begin{lemma}\label{lem: EstHesInv}
	Let $t \in [0,T]$ and let $(\xi,\omega) \in \mathcal{U}_{\delta_5}(0) \subset \mathbb{R}^n \times L^2(0,t;\mathbb{R}^r)$. Then there exists a constant $M_\mathcal{V} > 0$ independent of $t$, $\xi$ and $\omega$ such that
	\begin{equation*}
		\Vert \nabla^2_{\xi\xi} \mathcal{V}(t,\xi,\omega)^{-1} \Vert_2 < M_\mathcal{V}.
	\end{equation*}
\end{lemma}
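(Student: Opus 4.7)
My plan is to combine the perturbation estimate already worked out inside the proof of \Cref{Theorem: Hessian invertible close to zero} with a compactness argument over $t\in[0,T]$, both applied in a matrix norm where the earlier estimates live, and then transfer to the spectral norm by norm equivalence.

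First, I would observe that the proof of \Cref{Theorem: Hessian invertible close to zero} already shows, via continuity of $s\mapsto \nabla^2_{\xi\xi}\mathcal{V}(s,0,0)$ on $[0,T]$ (from \Cref{Proposition: Space derivatives continuous in time}), the pointwise positive definiteness on $[0,T]$ (from \Cref{Proposition: Hessian in zero invertible}), and Cramer's rule, that the map $s\mapsto \nabla^2_{\xi\xi}\mathcal{V}(s,0,0)^{-1}$ is continuous on the compact set $[0,T]$. In particular
\begin{equation*}
	M_0 \coloneqq \max_{s\in[0,T]} \Vert \nabla^2_{\xi\xi}\mathcal{V}(s,0,0)^{-1} \Vert_{n,n} < \infty,
\end{equation*}
and this quantity depends only on the data of the problem, not on $t$, $\xi$, or $\omega$.

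Second, the same proof of \Cref{Theorem: Hessian invertible close to zero} shows that $\delta_5$ was chosen precisely so that, for every $t\in[0,T]$ and every $(\xi,\omega)\in \mathcal{U}_{\delta_5}(0)$,
\begin{equation*}
	\Vert \nabla^2_{\xi\xi}\mathcal{V}(t,\xi,\omega) - \nabla^2_{\xi\xi}\mathcal{V}(t,0,0) \Vert_{n,n}
	\leq \tfrac{1}{2} \Vert \nabla^2_{\xi\xi}\mathcal{V}(t,0,0)^{-1} \Vert_{n,n}^{-1}.
\end{equation*}
A standard Banach perturbation (Neumann series) argument, applied in the maximum norm, then yields $\Vert \nabla^2_{\xi\xi}\mathcal{V}(t,\xi,\omega)^{-1} \Vert_{n,n} \le 2\Vert \nabla^2_{\xi\xi}\mathcal{V}(t,0,0)^{-1} \Vert_{n,n} \le 2M_0$ uniformly in $t$, $\xi$, $\omega$.

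Finally, since all matrix norms on $\mathbb{R}^{n,n}$ are equivalent, there exists a constant $c>0$ (depending only on $n$) with $\Vert \cdot \Vert_2 \leq c\,\Vert \cdot \Vert_{n,n}$, and setting $M_\mathcal{V} \coloneqq 2cM_0 + 1$ gives the assertion. I do not anticipate a genuine obstacle here: the heavy lifting, namely the uniform-in-time perturbation bound and the positive-definiteness of $\nabla^2_{\xi\xi}\mathcal{V}(t,0,0)$, is already in place; the only slightly delicate point is to be explicit that the supremum $M_0$ is finite because $[0,T]$ is compact and the inverse depends continuously on $s$, and to keep track that the equivalence constant between $\Vert\cdot\Vert_2$ and $\Vert\cdot\Vert_{n,n}$ is independent of $t$.
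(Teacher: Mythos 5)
Your proposal is in substance the paper's own proof: the paper likewise combines the perturbation bound built into the choice of $\delta_5$ in \Cref{Theorem: Hessian invertible close to zero} with a Neumann-series argument to obtain $\Vert \nabla^2_{\xi\xi}\mathcal{V}(t,\xi,\omega)^{-1}\Vert < 2\,\Vert \nabla^2_{\xi\xi}\mathcal{V}(t,0,0)^{-1}\Vert$, and then uses the continuity of $s \mapsto \nabla^2_{\xi\xi}\mathcal{V}(s,0,0)^{-1}$ on the compact interval $[0,T]$ (established via Cramer's rule in the proof of \Cref{Theorem: Hessian invertible close to zero}) to set $M_\mathcal{V} = 2\max_{s\in[0,T]}\Vert \nabla^2_{\xi\xi}\mathcal{V}(s,0,0)^{-1}\Vert_2$. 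The only structural difference is that the paper works in the spectral norm throughout, whereas you maximize in the entrywise maximum norm first and convert at the end.

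That difference, however, is where your argument does not hold as literally stated: the Banach perturbation lemma you invoke requires a submultiplicative norm, and the entrywise maximum norm $\Vert\cdot\Vert_{n,n}$ is not submultiplicative for $n\geq 2$ (for the all-ones matrix $J$ one has $\Vert J\Vert_{n,n}=1$ but $\Vert J^2\Vert_{n,n}=n$). Writing $A = \nabla^2_{\xi\xi}\mathcal{V}(t,0,0)$ and $E = \nabla^2_{\xi\xi}\mathcal{V}(t,\xi,\omega)-A$, the hypothesis $\Vert E\Vert_{n,n}\leq \tfrac{1}{2}\Vert A^{-1}\Vert_{n,n}^{-1}$ only yields $\Vert A^{-1}E\Vert_{n,n}\leq \tfrac{n}{2}$, which for $n\geq 2$ does not make the Neumann series $\sum_{k\geq 0}(-A^{-1}E)^k$ converge, so the bound $\Vert(A+E)^{-1}\Vert_{n,n}\leq 2\Vert A^{-1}\Vert_{n,n}$ does not follow from your stated smallness condition. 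The repair is immediate and is what the paper does implicitly: the proof of \Cref{Theorem: Hessian invertible close to zero} actually bounds the perturbation by $(\check{M}_3 + \check{M}_6)\delta_5$, so the equivalence constant between $\Vert\cdot\Vert_2$ and $\Vert\cdot\Vert_{n,n}$ can be absorbed into a further time-independent decrease of $\delta_5$ — exactly as done in the proof of \Cref{prop: WDargmin} — giving the smallness hypothesis directly in the spectral norm, where the Neumann argument is valid; your final norm-conversion step then becomes superfluous.
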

\begin{proof}
	First note that $\delta_5$ was chosen such that for all $(\xi,\omega) \in \mathcal{U}_{\delta_5}(0)$ it holds $\Vert \nabla^2_{\xi\xi} \mathcal{V}(t,\xi,\omega) - \nabla^2_{\xi\xi} \mathcal{V}(t,0,0) \Vert_2 \leq \frac{1}{2} \Vert \nabla^2_{\xi\xi} \mathcal{V}(t,0,0)^{-1} \Vert_2^{-1} $. It follows
	\begin{equation*}
		\Vert \nabla^2_{\xi\xi} \mathcal{V}(t,\xi,\omega)^{-1} \Vert_2 < 2 \Vert \nabla^2_{\xi\xi} \mathcal{V}(t,0,0)^{-1} \Vert_2.
	\end{equation*}
	This can be seen by utilizing the Neumann series, see for example \cite[Proposition 1 in Section 3.5]{Zei95AMS109} and \cite[Proposition 7 in Section1.23]{Zei95AMS108}. As mentioned in the proof of Theorem \ref{Theorem: Hessian invertible close to zero} the mapping $s \mapsto \nabla^2_{\xi\xi} \mathcal{V}(s,0,0)^{-1}$ is continuous in $[0,T]$. Hence the right hand side admits a maximum and setting $M_\mathcal{V} = 2 \max\limits_{s \in [0,T]} \Vert \mathcal{V}(s,0,0)^{-1} \Vert_2$ yields the assertion.
\end{proof}
\begin{lemma}\label{lem: HesInvLip}
	Let $t \in [0,T]$. Then there exists a constant $L_\mathcal{V} > 0$ independent of $t$ such that for all $x,y \in \mathbb{R}^n$ and $\omega \in L^2(0,t;\mathbb{R}^r)$ satisfying $\Vert x \Vert, \Vert y \Vert < \delta_{5}$ and $\Vert \omega \Vert_{L^2(0,t;\mathbb{R}^r)} < \delta_{5}$ it holds
	\begin{equation*}
		\Vert \nabla^2_{\xi\xi} \mathcal{V}(t,y,\omega)^{-1} - \nabla^2_{\xi\xi} \mathcal{V}(t,x,\omega)^{-1} \Vert_2
		\leq L_{\mathcal{V}} \Vert y - x \Vert.
	\end{equation*}
\end{lemma}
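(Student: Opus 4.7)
The plan is to apply the standard identity
$$A^{-1} - B^{-1} = A^{-1}(B-A)B^{-1}$$
with $A = \nabla^2_{\xi\xi}\mathcal{V}(t,y,\omega)$ and $B = \nabla^2_{\xi\xi}\mathcal{V}(t,x,\omega)$, both of which are invertible by Theorem \ref{Theorem: Hessian invertible close to zero} since $x,y \in \mathcal{U}_{\delta_5}(0)$ and $\Vert\omega\Vert_{L^2(0,t;\mathbb{R}^r)} < \delta_5$. Taking spectral norms and invoking the time-independent bound from Lemma \ref{lem: EstHesInv} yields
$$\Vert \nabla^2_{\xi\xi}\mathcal{V}(t,y,\omega)^{-1} - \nabla^2_{\xi\xi}\mathcal{V}(t,x,\omega)^{-1} \Vert_2 \leq M_\mathcal{V}^2 \, \Vert \nabla^2_{\xi\xi}\mathcal{V}(t,y,\omega) - \nabla^2_{\xi\xi}\mathcal{V}(t,x,\omega) \Vert_2.$$
The task is thus reduced to establishing a Lipschitz estimate for the Hessian itself with a constant independent of $t$.

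To obtain this, I would exploit that the segment connecting $x$ and $y$ lies inside $\mathcal{U}_{\delta_5}(0)$, so that the fundamental theorem of calculus applied entrywise gives
$$\frac{\partial^2 \mathcal{V}}{\partial \xi_i \partial \xi_j}(t,y,\omega) - \frac{\partial^2 \mathcal{V}}{\partial \xi_i \partial \xi_j}(t,x,\omega) = \int_0^1 D^3_{\xi^3}\mathcal{V}\bigl(t,\, x + s(y-x),\, \omega\bigr)(y-x,e_i,e_j)\,\mathrm{d}s$$
for all $i,j \in \{1,\dots,n\}$. The uniform third-order bound from Proposition \ref{Proposition: Bounds for derivatives of the value function}, namely $|D^3_{\xi^3}\mathcal{V}(t,\eta,\omega)(y-x,e_i,e_j)| \leq \check{M}_3 \Vert y-x \Vert$, is valid at every point on the segment since it lies in $\mathcal{U}_{\delta_5}(0) \subset \mathcal{U}_{\delta_4}(0)$. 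Consequently each entry of the matrix difference is bounded by $\check{M}_3 \Vert y-x \Vert$, and the equivalence of the entrywise maximum norm and the spectral norm on $\mathbb{R}^{n,n}$ furnishes a dimensional constant $c_1 > 0$ such that
$$\Vert \nabla^2_{\xi\xi}\mathcal{V}(t,y,\omega) - \nabla^2_{\xi\xi}\mathcal{V}(t,x,\omega) \Vert_2 \leq c_1 \check{M}_3 \Vert y-x \Vert.$$

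Combining the two estimates yields the claim with $L_\mathcal{V} := c_1 \check{M}_3 M_\mathcal{V}^2$. There is no genuine obstacle in this argument; the crucial point — as in the rest of the paper — is merely to verify that every constant in the chain is independent of $t$. This is guaranteed by the time-independence of $\check{M}_3$ in Proposition \ref{Proposition: Bounds for derivatives of the value function}, by that of $M_\mathcal{V}$ in Lemma \ref{lem: EstHesInv}, and trivially by that of the norm-equivalence factor $c_1$, which depends only on the fixed dimension $n$.
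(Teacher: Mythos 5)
Your proof is correct, but it routes the argument differently from the paper. The paper first establishes (via Cramer's rule, using smoothness of the determinant and adjugate) that $z \mapsto \nabla^2_{\xi\xi}\mathcal{V}(t,z,\omega)^{-1}$ is itself $C^\infty$ on $\mathcal{U}_{\delta_5}(0)$, then applies the fundamental theorem of calculus to the \emph{inverse} map along the segment and bounds its derivative by the identity $\Vert \partial_{\xi_i} A^{-1}\Vert_2 \leq \Vert A^{-1}\Vert_2^2\,\Vert \partial_{\xi_i} A\Vert_2$, using Lemma \ref{lem: EstHesInv} and Proposition \ref{Proposition: Bounds for derivatives of the value function} to arrive at $L_\mathcal{V} = M_\mathcal{V}^2 n^2 \check{M}_3$. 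You instead factor the difference of inverses algebraically via $A^{-1}-B^{-1}=A^{-1}(B-A)B^{-1}$ and only then apply the fundamental theorem of calculus to the \emph{Hessian} itself. The two arguments rest on exactly the same two time-independent ingredients ($M_\mathcal{V}$ and $\check{M}_3$, plus convexity of the ball so the segment stays in $\mathcal{U}_{\delta_5}(0) \subset \mathcal{U}_{\delta_4}(0)$), but your decomposition is genuinely more elementary: it needs no differentiability of the inverse map at all (so Cramer's rule and the derivative-of-the-inverse formula are dispensed with), and it requires invertibility of the Hessian only at the two endpoints $x,y$ rather than along the whole segment. As a by-product you obtain the marginally sharper constant $L_\mathcal{V}=n\check{M}_3 M_\mathcal{V}^2$ (your $c_1$ can be taken as $n$, since $\Vert M \Vert_2 \leq n \max_{i,j}\vert M_{ij}\vert$), versus the paper's $n^2\check{M}_3 M_\mathcal{V}^2$; the constant's independence of $t$ is verified in both arguments in the same way. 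One pedantic point that affects both proofs equally: for $t=0$ the claim holds trivially since $\nabla^2_{\xi\xi}\mathcal{V}(0,\cdot,\omega)=I_n$, which covers the endpoint $t=0$ included in the statement while Proposition \ref{Proposition: Bounds for derivatives of the value function} is formally stated for $t\in(0,T]$.
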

\begin{proof}
	Note that $\mathcal{V}(t,\cdot,\omega)$, det$(\cdot)$ and adj$(\cdot)$ are $C^{\infty}$-functions on their respective domains, where adj($A$) is the adjugate matrix of $A$. Hence Cramer's rule implies that $z \mapsto \nabla^2_{\xi\xi} \mathcal{V}(t,z,\omega)^{-1}$ is $C^{\infty}$ in $\mathcal{U}_{\delta_5}(0)$. Then for $x,y \in \mathbb{R}^n$ satisfying $\Vert x \Vert, \Vert y \Vert < \delta_5$ it holds
	\begin{equation*}
		\begin{aligned}
			&\Vert \nabla^2_{\xi\xi}\mathcal{V}(t,y,\omega)^{-1} - \nabla^2_{\xi\xi}\mathcal{V}(t,x,\omega)^{-1} \Vert_2
			= \left\Vert \int_0^1 D_\xi \nabla^2_{\xi\xi}\mathcal{V}(t,x + \tau (y-x),\omega)^{-1} (y-x) \,\mathrm{d}\tau \right\Vert_2\\
			&\leq \int_0^1 \Vert D_\xi \nabla^2_{\xi\xi}\mathcal{V}(t,x + \tau (y-x),\omega)^{-1} \Vert_{L(\mathbb{R}^n,\mathbb{R}^{n,n})} \,\mathrm{d}\tau~ \Vert y-x \Vert\\
			&= \int_0^1 \sup_{\Vert z \Vert = 1} \Vert D_\xi \nabla^2_{\xi\xi}\mathcal{V}(t,x + \tau (y-x),\omega)^{-1} z \Vert_2 \,\mathrm{d}\tau~ \Vert y-x \Vert\\
			&\leq \int_0^1 \sup_{\Vert z \Vert = 1} \sum_{i=1}^n \vert z_i \vert \Vert D_\xi \nabla^2_{\xi\xi}\mathcal{V}(t,x + \tau (y-x),\omega)^{-1} e_i \Vert_2 \,\mathrm{d}\tau \, \Vert y-x \Vert\\
			&\leq \int_0^1 \sum_{i=1}^n \left\Vert \frac{\partial}{\partial \xi_i} \nabla^2_{\xi\xi} \mathcal{V}(t, x + \tau (y-x),\omega)^{-1} \right\Vert_2 \,\mathrm{d}\tau \, \Vert y-x \Vert\\
			&\leq \int_0^1 \sum_{i=1}^n \left\Vert \nabla^2_{\xi\xi} \mathcal{V}(t, x + \tau (y-x),\omega)^{-1} \right\Vert_2^2 \left\Vert \frac{\partial}{\partial \xi_i} \nabla^2_{\xi\xi} \mathcal{V}(t, x + \tau (y-x),\omega) \right\Vert_2 \,\mathrm{d}\tau \, \Vert y-x \Vert\\
			&\leq \int_0^1 \sum_{i=1}^n M_\mathcal{V}^2 \left\Vert \frac{\partial}{\partial \xi_i} \nabla^2_{\xi\xi} \mathcal{V}(t, x + \tau (y-x),\omega) \right\Vert_2 \,\mathrm{d}\tau \, \Vert y-x \Vert,
		\end{aligned}
	\end{equation*}
	where Lemma \ref{lem: EstHesInv} was used in the last estimate. It remains to show an appropriate estimate for\\ $ \left\Vert \frac{\partial}{\partial \xi_i} \nabla^2_{\xi\xi} \mathcal{V}(t, y + \tau (x-y),\omega) \right\Vert_2 $. To that end let $\nu \in \mathbb{R}^n$ fulfill $\Vert \nu \Vert < \delta_{5}$ and let $j,k \in \{  1,...,n \}$ be such that
	\begin{equation*}
		\max\limits_{h,l \in \{  1,...,n \}} \left\vert \frac{\partial^3}{\partial \xi_i \partial \xi_h \partial \xi_l} \mathcal{V}(t,\nu,\omega) \right\vert
		= \left\vert \frac{\partial^3}{\partial \xi_i \partial \xi_j \partial \xi_k} \mathcal{V}(t,\nu,\omega) \right\vert.
	\end{equation*}
	With Proposition \ref{Proposition: Bounds for derivatives of the value function} it follows
	\begin{equation*}
		\begin{aligned}
			&\left\Vert \frac{\partial}{\partial \xi_i} \nabla^2_{\xi\xi} \mathcal{V}(t,\nu,\omega) \right\Vert_2 
			\leq n \left\Vert \frac{\partial}{\partial \xi_i} \nabla^2_{\xi\xi} \mathcal{V}(t,\nu,\omega) \right\Vert_{n,n} 
			= n \left\vert \frac{\partial^3}{\partial \xi_i \partial \xi_j \partial \xi_k} \mathcal{V}(t,\nu,\omega) \right\vert\\
			&= n \left\vert D_\xi^3 \mathcal{V}(t,\nu,\omega)(e_i,e_j,e_k) \right\vert
			\leq n \check{M}_3.
		\end{aligned}
	\end{equation*}
	Finally it follows
	\begin{equation*}
		\Vert \nabla^2_{\xi\xi}\mathcal{V}(t,y,\omega)^{-1} - \nabla^2_{\xi\xi}\mathcal{V}(t,x,\omega)^{-1} \Vert_2
		\leq M_\mathcal{V}^2 n^2 \check{M}_3 \Vert x-y\Vert
		\eqqcolon L_\mathcal{V} \Vert x-y \Vert.
	\end{equation*}
\end{proof}

%%-----------------------------
\bibliographystyle{siam}
\bibliography{references} 
%%-----------------------------

\end{document}